\documentclass{compositio}

\usepackage{amsmath, amssymb, amsthm}
\usepackage[colorlinks=true]{hyperref}
\usepackage[alphabetic]{amsrefs}
\usepackage{verbatim}
\usepackage{xcolor}
\usepackage{graphicx}
\usepackage{url}
\usepackage{enumitem}




\newtheorem{lemma}{Lemma}[section]
\newtheorem{theorem}[lemma]{Theorem}
\newtheorem{corollary}[lemma]{Corollary}
\newtheorem{proposition}[lemma]{Proposition}
\theoremstyle{definition}
\newtheorem{definition}[lemma]{Definition}

\newtheorem{example*}[lemma]{Example}
\newtheorem{remark}[lemma]{Remark}

\theoremstyle{remark}

\makeatletter
\newtheorem*{rep@theorem}{\rep@title}
\newcommand{\newreptheorem}[2]{%
\newenvironment{rep#1}[1]{%
\def\rep@title{{\bf #2 \ref{##1}}}%
\begin{rep@theorem}}%
{\end{rep@theorem}}}
\makeatother
\newreptheorem{theorem}{Theorem}

\DeclareRobustCommand{\qedify}[1]{%
  \ifmmode \quad\hbox{#1}
  \else
    \leavevmode\unskip\penalty9999 \hbox{}\nobreak\hfill
    \quad\hbox{#1}%
  \fi
}
\newenvironment{example}{\begin{example*}\pushQED{\qedify{$\diamondsuit$}}}
{\popQED\end{example*}}


\newcommand{\R}{R}
\newcommand{\K}{K}

\newcommand{\Rbar}{\ensuremath{\overline{\mathbb R}}}

\newcommand{\Sx}{\Rbar[x_0,\dotsc,x_n]}
\newcommand{\Bx}{\mathbb{B}[x_0,\dotsc,x_n]}
\newcommand{\Kx}{\K[x_0,\dotsc,x_n]}
\newcommand{\M}{\mathcal{M}}

\newcommand{\C}{\mathcal{C}}
\newcommand{\V}{\mathcal{V}}
\newcommand{\B}{\mathcal{B}}

\DeclareMathOperator{\Hom}{Hom}
\DeclareMathOperator{\inn}{in}

\DeclareMathOperator{\rank}{rank}
\DeclareMathOperator{\Sxpec}{Spec}
\DeclareMathOperator{\val}{val}
\DeclareMathOperator{\im}{im}
\DeclareMathOperator{\trop}{trop}
\DeclareMathOperator{\supp}{supp}

\DeclareMathOperator{\spann}{span}

\DeclareMathOperator{\GL}{GL}

\DeclareMathOperator{\mon}{Mon}

\DeclareMathOperator{\Proj}{Proj}

\DeclareMathOperator{\Cox}{Cox}
\DeclareMathOperator{\coker}{coker}

\DeclareMathOperator{\cone}{cone}
\DeclareMathOperator{\TSxpec}{TSpec}

\makeatletter
\newcommand{\superimpose}[2]
{{\ooalign{$#1\@firstoftwo#2$\cr\hfil$#1\@secondoftwo#2$\hfil\cr}}}
\makeatother
\newcommand{\ttimes}{\hspace{0.4mm}{\mathpalette\superimpose{{\circ}
{\cdot}}}\hspace{0.4mm}}
\newcommand{\tplus}{\mathrel{\oplus}}

\begin{document}

\title{Tropical Ideals}

\author{Diane Maclagan}
\email{D.Maclagan@warwick.ac.uk}
\address{Mathematics Institute, University of Warwick, Coventry CV4 7AL, United 
Kingdom.}

\author{Felipe Rinc\'on}
\email{feliperi@math.uio.no}
\address{Department of Mathematics, University of Oslo, 0851 Oslo, Norway.}

\classification{14TO5, O5B35}

\begin{abstract}
We introduce and study a special class of ideals, called tropical
ideals, in the semiring of tropical polynomials, with the goal of
developing a useful and solid algebraic foundation for tropical
geometry. The class of tropical ideals strictly includes the tropicalizations
of classical ideals, and allows us to define subschemes of tropical toric varieties,
generalizing \cite{Giansiracusa2}.  We investigate some
of the basic structure of tropical ideals, and show that they satisfy many desirable
properties that mimic the classical setup. In particular, every
tropical ideal has an associated variety, which we prove is
always a finite polyhedral complex.  In addition we show that tropical
ideals satisfy the ascending chain condition, even though they are
typically not finitely generated, and also the weak Nullstellensatz.
\end{abstract}

\maketitle

\section{Introduction}

Tropical algebraic geometry is a piecewise linear shadow of algebraic geometry,
in which varieties are replaced by polyhedral complexes.  
This area has grown significantly in the past decade 
and has had great success in numerous applications,
such as  Mikhalkin's calculation of Gromov-Witten invariants of
$\mathbb P^2$ \cite{MikhalkinP2}, the work of
Cools-Draisma-Payne-Robeva \cite{CDPR} and Jensen-Payne
\cites{JensenPayne1, JensenPayne2} on Brill-Noether theory, and the
Gross-Siebert program in mirror symmetry \cite{GrossBook}.

One current limitation of the theory, however, is that most techniques
developed to date are focused on tropical varieties and tropical
cycles, as opposed to schemes or more general spaces.  Many of the
standard tools of modern algebraic geometry thus do not yet have a
tropical counterpart.  

In \cite{Giansiracusa2}, Jeffrey and
Noah Giansiracusa described how to tropicalize a subscheme of a toric
variety using congruences on the semiring of tropical polynomials.
The authors of this paper developed this further in
\cite{MaclaganRincon1}, clarifying the connection to 
tropical linear spaces and valuated matroids.

Building on this work, in this paper we investigate a special class of
ideals in the semiring of tropical polynomials, called tropical
ideals, in which bounded-degree pieces are ``matroidal''.  This
allows us to define tropical subschemes of a tropical toric variety,
which include, but are not limited to, tropicalizations of classical
subschemes of toric varieties.  We show that, unlike for more general
ideals, varieties of tropical ideals are always finite polyhedral complexes.
In addition, even though the semiring of tropical polynomials is far
from Noetherian, the restricted class of tropical ideals satisfies the
ascending chain condition.  They also satisfy a version of the
Nullstellensatz that is completely analogous to the classical formulation.

We denote by $\Rbar$ the tropical semiring $\mathbb R \cup \{\infty\}$
with the operations tropical sum $\tplus \,\, = \min$ and tropical
multiplication $\ttimes = +$.  The semiring of tropical polynomials
$\Rbar[x_1,\dots,x_n]$ consists of polynomials with coefficients in
$\Rbar$ where all operations are tropical.  For simplicity, in this
introduction we describe our results in the case where the ambient toric variety 
is tropical affine space. 
A general treatment for any tropical toric variety is given in Section
\ref{s:4}.

\begin{definition}\label{d:tropicalideal}
An ideal $I \subseteq \Rbar[x_1,\dots,x_n]$ is a 
{\em tropical ideal} if for each degree $d \geq 0$ the set $I_{\leq d}$ of polynomials
in $I$ of degree at most $d$ is a tropical linear space, or equivalently, $I_{\leq d}$ is 
the set of vectors of a valuated matroid. 
More concretely, $I$ is a tropical ideal if it satisfies the following
``monomial elimination axiom'': 

\parbox[center][][c]{0.95\textwidth}
{
$\bullet$ For any $f, g \in I_{\leq d}$ and any monomial 
$\mathbf{x}^{\mathbf{u}}$ for which 
$[f]_{\mathbf{x}^{\mathbf{u}}} = [g]_{\mathbf{x}^{\mathbf{u}}} \neq \infty$,
there exists $h \in I_{\leq d}$ such that
$[h]_{\mathbf{x}^{\mathbf{u}}} = \infty$ and $[h]_{\mathbf{x}^{\mathbf{v}}} 
\geq \min( [f]_{\mathbf{x}^{\mathbf{v}}}, [g]_{\mathbf{x}^{\mathbf{v}}})$ 
for all monomials $\mathbf{x}^{\mathbf{v}}$, with the equality holding
whenever $[f]_{\mathbf{x}^{\mathbf{v}}} \neq [g]_{\mathbf{x}^{\mathbf{v}}}$.
}

\noindent Here we use the notation $[f]_{\mathbf{x}^{\mathbf{u}}}$ to denote
the coefficient of the monomial $\mathbf{x}^{\mathbf{u}}$ in the tropical 
polynomial $f$.
\end{definition}

If $J \subseteq K[x_1,\dotsc,x_n]$ is an ideal
then its tropicalization $\trop(J) \subseteq \Rbar[x_1,\dots,x_n]$ 
is a tropical ideal. 
However, the class of tropical ideals is larger;
we exhibit in Example~\ref{ex:non-realizable} a tropical ideal 
that cannot be realized as $\trop(J)$ 
for any ideal $J \subseteq K[x_1,\dotsc,x_n]$ over any field $K$.

As we describe below, 
the ``monomial elimination axiom'' required for tropical ideals
makes up for the lack of subtraction in the tropical semiring,
and gives tropical ideals a rich algebraic structure 
reminiscent of ideals in a polynomial ring. In particular, tropical ideals
seem to be better suited than general ideals of $\Rbar[x_1,\dots,x_n]$ for 
studying the underlying geometry; see Remark \ref{rem:tropicalspectra}.

Given a polynomial $f \in \Rbar[x_1,\dots,x_n]$, its associated {\em variety} is
\[
V(f) := \{ \mathbf{w} \in \Rbar^{n} : f(\mathbf{w}) = \infty \text{ or the minimum in }
f(\mathbf{w}) \text{ is achieved at least twice}\}. 
\]
If $I \subseteq \Rbar[x_1,\dots,x_n]$ is an ideal, the {\em variety} of $I$ is 
\begin{equation}  \label{eqtn:variety}
V(I) := \bigcap_{f \in I} V(f).
\end{equation}
If $I$ is an arbitrary ideal in $\Rbar[x_1,\dots,x_n]$ then
$V(I)$ can be a fairly arbitrary subset of $\Rbar^{n}$;
see Example~\ref{e:arbitrary}. 
In particular, $V(I)$ might not even be polyhedral.  
However, if $I$ is a tropical
ideal, our main result shows that this is not the case:

\begin{theorem}\label{t:polyhedralcomplexintro}
If $I \subseteq \Rbar[x_1,\dots,x_n]$ is a tropical ideal then the
variety $V(I) \subseteq \Rbar^{n}$ is a finite 
polyhedral complex.
\end{theorem}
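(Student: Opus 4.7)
The approach I would take is to exhibit $V(I)$ as the intersection of a descending chain of finite polyhedral complexes $V_d(I) := \bigcap_{f \in I_{\leq d}} V(f)$ and then argue, via the ascending chain condition for tropical ideals, that this chain stabilizes at some finite $d^*$, so that $V(I) = V_{d^*}(I)$. The key technical input is a Gröbner-style analysis carried out one degree at a time, using that each $I_{\leq d}$ is a tropical linear space.

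First I would reformulate membership in $V(I)$ in terms of initial forms: for $\mathbf w \in \Rbar^n$ and $f \in \Rbar[x_1,\dots,x_n]$, the condition $\mathbf w \in V(f)$ says exactly that the initial form $\inn_{\mathbf w}(f)$ is not a monomial (allowing $f(\mathbf w)=\infty$). Hence $\mathbf w \in V(I)$ iff no element of $I$ has monomial initial form at $\mathbf w$, iff $\mathbf w \in V_d(I)$ for every $d \geq 0$. A monomial $\mathbf x^{\mathbf u}$ of degree $\leq d$ arises as an initial form of some $f \in I_{\leq d}$ at $\mathbf w$ precisely when it is a loop of the initial matroid of the valuated matroid $M_d$ whose vectors are $I_{\leq d}$.

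Next, for each fixed $d$, I would leverage valuated matroid theory: the valuated matroid $M_d$, defined on the finite set of monomials of degree at most $d$, produces a finite polyhedral fan on $\Rbar^{N_d}$ whose relatively open cells are the loci on which the initial matroid is constant. Pulling back under the tropically linear evaluation map $\Rbar^n \to \Rbar^{N_d}$, $\mathbf w \mapsto (\mathbf w \cdot \mathbf u)_{|\mathbf u|\leq d}$, yields a finite polyhedral subdivision of $\Rbar^n$ on which membership in $V_d(I)$ is locally constant, since being a loop of the initial matroid is constant on relatively open cells. Thus each $V_d(I)$ is a finite polyhedral complex, and the sequence $V_1(I) \supseteq V_2(I) \supseteq \cdots$ has intersection $V(I)$.

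The final step, and the one I expect to be the main obstacle, is stabilization: one must show $V_{d^*}(I) = V_{d^*+1}(I) = \cdots$ for some $d^*$. This is delicate because tropical ideals are typically not finitely generated, so one cannot simply extract a finite subset cutting out $V(I)$. The plan is to invoke the ascending chain condition for tropical ideals advertised in the abstract by repackaging a strict containment $V_d(I) \supsetneq V_{d+1}(I)$ as a strict ascent in an auxiliary chain of tropical ideals (for instance, the tropical ideal generated by those monomials realized as loops of $\inn_{\mathbf w}(M_d)$ as $\mathbf w$ ranges over $\Rbar^n$, or a closely related construction tracking the new obstructions added at each degree). Any infinite strict descent of the $V_d(I)$ would then force an infinite strict ascent of tropical ideals, contradicting the ACC. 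With stabilization in hand, $V(I) = V_{d^*}(I)$ and the theorem follows from the preceding step.
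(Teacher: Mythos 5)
Your overall strategy—reduce to the bounded-degree pieces $V_d(I) := \bigcap_{f \in I_{\leq d}} V(f)$, show each is a finite polyhedral complex via the valuated matroid $M_d$ on $\mon_{\leq d}$, and then argue the descending chain $V_1(I) \supseteq V_2(I) \supseteq \dotsb$ stabilizes—is in the right spirit, and the first two steps are sound (modulo handling the strata with $\infty$-coordinates, which requires contracting the matroid on the monomials divisible by the corresponding variables). The genuine gap is in the stabilization step, exactly where you flag difficulty.

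The specific mechanism you propose—invoking the ascending chain condition by repackaging strict descents $V_d(I) \supsetneq V_{d+1}(I)$ as strict ascents in an auxiliary chain of tropical ideals—does not work as described, because the natural candidate $\langle I_{\leq d}\rangle$ is \emph{not} a tropical ideal. The paper itself gives the counterexample: $\langle x \tplus y \rangle \subseteq \Rbar[x,y]$ fails the vector elimination axiom in degree $2$, since $x^2 \tplus y^2$ is missing. So the ACC for tropical ideals simply does not apply to the chain $\langle I_{\leq 1}\rangle \subseteq \langle I_{\leq 2}\rangle \subseteq \dotsb$, and the alternative you sketch (ideals of ``monomials realized as loops of $\inn_{\mathbf w}(M_d)$'') is $\mathbf w$-dependent and likewise not obviously a tropical ideal. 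The paper obtains the degree bound by a different route (Lemma~\ref{l:Degreebound}): the Hilbert function $H_I$ is eventually polynomial (Proposition~\ref{prop:eventuallypoly}), only finitely many monomial ideals share a given Hilbert function (citing \cite{MaclaganAntichains}), and for generic $\mathbf w$ the initial ideal $\inn_{\mathbf w}(I)$ is monomial (Lemma~\ref{l:monomial}); together these give an explicit $D$ beyond which the Gr\"obner complex does not refine further. That is the same engine the paper uses to prove the ACC (Theorem~\ref{p:ascendingchain}), which is why the ACC does not help here: they are siblings, not parent and child. Once the Gr\"obner complex is finite, the paper extracts a finite tropical basis (Theorem~\ref{t:TropicalBasis}) and writes $V(I)$ as a finite intersection of tropical hypersurfaces. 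If you want to pursue your plan, you should replace the ACC appeal with the Hilbert-function/monomial-ideal finiteness argument for the degree bound.
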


Our proof of Theorem~\ref{t:polyhedralcomplexintro} generalizes
the case where $I = \trop(J)$ for a classical ideal $J$:
we develop a Gr\"obner theory for tropical ideals,
and show that any tropical ideal has a finite 
Gr\"obner complex, as in \cite{TropicalBook}*{\S 2.5}. The variety of
$I$ is then a subcomplex of its Gr\"obner complex.

A {\em tropical basis} for a tropical ideal $I$ is a collection of
polynomials in $I$ the intersection of whose varieties is the variety
$V(I)$. It is well known that a tropical ideal of the form $\trop(J)$
always admits a finite tropical basis; we show in Theorem
\ref{t:TropicalBasis} that this is in fact true for any tropical
ideal.

We also investigate commutative algebraic properties of tropical ideals.
The fact that tropical ideals are ``matroidal'' allows us to naturally define
the Hilbert function of any homogeneous tropical ideal. In the case where
$I = \trop(J)$ for a classical ideal $J$, the Hilbert function of $I$
agrees with the Hilbert function of $J$. In Proposition 
\ref{prop:eventuallypoly} 
we show that, just as in the classical case, the Hilbert function
of any homogeneous tropical ideal is eventually polynomial.

The semiring $\Rbar[x_1,\dots,x_n]$ is not Noetherian, and tropical
ideals are almost never finitely generated; see Example 
\ref{e:notFinitelyGenerated}. 
Moreover, Example~\ref{e:infinitefamily} 
shows an infinite family of distinct homogeneous tropical ideals $\{ I^j \}_{j \geq 1}$,
all of which have the same Hilbert function, such that for any $d \geq 0$, 
if $k, l \geq d$, then the tropical ideals
$I^k$ and $I^l$ agree on all their graded pieces of degree at most $d$.
Nonetheless, tropical ideals do satisfy the following Noetherian property.

\begin{theorem}\label{t:acc}
{(Ascending chain condition.)} There is no infinite ascending chain
$I_1 \subsetneq I_2 \subsetneq I_3 \subsetneq \dotsb$ of tropical ideals in 
$\Rbar[x_1,\dots,x_n]$.
\end{theorem}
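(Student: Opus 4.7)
The plan is to degenerate any chain of tropical ideals to a chain of monomial tropical ideals via the Gr\"obner theory developed in the proof of Theorem~\ref{t:polyhedralcomplexintro}, and then to apply the classical Hilbert basis theorem to get a contradiction.

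First I would homogenize, reducing to the case of a chain $I_1 \subsetneq I_2 \subsetneq \dotsb$ of homogeneous tropical ideals in $\Rbar[x_0, x_1, \dots, x_n]$. For each $j$ the Gr\"obner complex of $I_j$ is a finite polyhedral complex, so the set $U_j \subseteq \mathbb{R}^{n+1}$ of weights $w$ lying in the relative interior of some maximal cell is open and dense; for any such $w$ the initial ideal $\inn_w(I_j)$ is a \emph{monomial} tropical ideal. By the Baire category theorem the countable intersection $\bigcap_{j \geq 1} U_j$ is non-empty, and I fix any $w$ in this intersection.

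The containments $I_j \subseteq I_{j+1}$ then yield an ascending chain $\inn_w(I_1) \subseteq \inn_w(I_2) \subseteq \dotsb$ of monomial tropical ideals. Since monomial tropical ideals in $\Rbar[x_0,\dots,x_n]$ are in natural bijection with classical monomial ideals in $K[x_0,\dots,x_n]$ (for any field $K$), Dickson's lemma forces this chain to stabilize: there exists $J$ such that $\inn_w(I_j) = \inn_w(I_J)$ for every $j \geq J$.

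The remaining and subtlest step is the key lemma: if $I \subseteq I'$ are tropical ideals with $\inn_w(I) = \inn_w(I')$, then $I = I'$. I would prove this degree by degree. In each degree $d$, the initial of the valuated matroid underlying $I_d$ coincides with that of $I'_d$, and since taking initials preserves the rank of a valuated matroid, $I_d$ and $I'_d$ have the same rank; an inclusion of tropical linear spaces of the same rank in a common ambient space must be an equality, as both are pure-dimensional polyhedral complexes of the same dimension and a valuated matroid is determined by its set of vectors. This would give $I_d = I'_d$ for all $d$, hence $I = I'$, and therefore $I_j = I_J$ for every $j \geq J$, contradicting the strictness of the original chain. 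The main obstacle is this rigidity statement for valuated matroids, which I would establish either via direct circuit-valuation manipulations or by comparing the underlying matroids' Bergman fans.
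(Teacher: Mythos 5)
Your proposal follows essentially the same route as the paper's proof of Theorem~\ref{p:ascendingchain}: homogenize, find a single weight $\mathbf{w}$ that simultaneously makes all $\inn_{\mathbf{w}}(I_j)$ monomial, apply Noetherianity of monomial ideals (Dickson's lemma), and then invoke a rigidity lemma saying that an inclusion of homogeneous tropical ideals with equal Hilbert functions is an equality. Two small remarks on the first part: the paper reaches the common $\mathbf{w}$ via Lemma~\ref{l:monomial} (the bad locus is a countable union of hyperplanes, hence a single generic $\mathbf{w}$ works for all $j$), which is a lighter tool than invoking the full Gr\"obner complex from Section~\ref{s:5} plus Baire, though your version is not circular. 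The paper also first passes through the semiring homomorphism $\varphi\colon R \to \mathbb B$, which reduces the chain to one over $\mathbb B[x_0,\dots,x_n]$ and strips away the valuations; this is what makes the matroid-quotient step below clean.

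The one genuine weak point is your justification of the rigidity lemma. You correctly identify it as the crux, but the argument ``both are pure-dimensional polyhedral complexes of the same dimension, so the inclusion must be an equality'' is not valid in that generality: a proper pure $d$-dimensional subcomplex of a pure $d$-dimensional complex certainly exists. What you need is the matroid-specific statement, and this is exactly what the paper supplies. Since $I_d \subseteq J_d$, every circuit of $\underline{\M_d(I)}$ is a union of circuits of $\underline{\M_d(J)}$, which by \cite{Oxley}*{Proposition~7.3.6} says $\underline{\M_d(J)}$ is a quotient of $\underline{\M_d(I)}$; equal ranks then force the matroids to coincide by \cite{Oxley}*{Corollary~7.3.4}. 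Once the underlying matroids agree, the valuated matroids $\M_d(I)$ and $\M_d(J)$ agree as well, because circuits of a valuated matroid are determined up to scaling by their supports and the circuits of $\M_d(I)$ already lie in $J_d$. This is the ``direct circuit-valuation manipulation'' you allude to; filling it in turns your sketch into essentially the paper's proof.
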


There are several versions of the Nullstellensatz for tropical geometry 
in the literature; see for example 
\cites{ShustinIzhakian, BertramEaston, JooMincheva, GrigorievPodolskii}. 
Most of these concern arbitrary finitely generated ideals in $\Rbar[x_1,\dots,x_n]$.
In our case, the rich structure we impose on tropical ideals allows us
to use the results in \cite{GrigorievPodolskii} to get the following
familiar formulation.

\begin{theorem}\label{t:tropicalnullstellensatzintro}(Tropical Nullstellensatz.)
If $I \subseteq \Rbar[x_1,\dots,x_n]$ is a tropical ideal then 
the variety $V(I) \subseteq \Rbar^n$ is empty if and only if $I$ 
is the unit ideal $\langle 0 \rangle$.
\end{theorem}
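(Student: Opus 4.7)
I would first dispose of the easy reverse direction: if $I = \langle 0 \rangle$, then $I$ is the whole semiring $\Rbar[x_1,\ldots,x_n]$, and in particular contains every constant $c \in \mathbb{R}$. For any such $c$, the variety $V(c)$ is empty, because $c(\mathbf{w}) = c \neq \infty$ for every $\mathbf{w}$ and the single monomial of $c$ cannot have its minimum achieved twice. Hence $V(I) \subseteq V(c) = \emptyset$.

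For the forward direction, I would suppose $V(I) = \emptyset$ and reduce to the finitely generated case. By the tropical basis theorem (Theorem~\ref{t:TropicalBasis}), the tropical ideal $I$ contains a finite tropical basis $\{f_1,\ldots,f_k\}$, so that
\[
V(f_1) \cap \cdots \cap V(f_k) = V(I) = \emptyset.
\]
I would then invoke the tropical Nullstellensatz of Grigoriev and Podolskii \cite{GrigorievPodolskii} for finite systems of tropical polynomials: the emptiness of this common variety forces the multiplicative identity $0 \in \Rbar$ to lie in the semiring ideal $\langle f_1,\ldots,f_k\rangle$ generated by the $f_i$. Since this ideal is contained in $I$, this produces $0 \in I$, and therefore $I = \langle 0 \rangle$.

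The main obstacle, I expect, will lie in extracting the result of \cite{GrigorievPodolskii} in precisely the algebraic form needed---namely, in confirming that their Nullstellensatz certificate of unsolvability is genuinely an element of the semiring ideal generated by $f_1,\ldots,f_k$, rather than merely a combinatorial or geometric witness that the common variety is empty. If the certificate produces some nonzero real constant $c$ rather than $0$ itself, this is no trouble: $c$ being a unit in $\Rbar$ under tropical multiplication gives $0 = (-c) \ttimes c \in I$. The overall architecture is then a clean two-step reduction, with Theorem~\ref{t:TropicalBasis} doing most of the heavy lifting and \cite{GrigorievPodolskii} finishing the argument.
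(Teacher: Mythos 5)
The easy direction is fine, and using Theorem~\ref{t:TropicalBasis} to pass to a finite tropical basis $\{f_1,\dots,f_k\}$ is exactly what the paper does. But the crucial step in your forward direction is wrong: the Grigoriev--Podolskii result does not, and cannot, tell you that $0$ lies in the \emph{finitely generated semiring ideal} $\langle f_1,\dots,f_k\rangle$. That statement is actually false in general. The paper's own Remark after Corollary~\ref{c:affineprojectivenullstellensatz} gives the counterexample $\langle x \tplus 0,\, x\tplus 1\rangle \subseteq \Rbar[x]$: the common variety of the two generators is empty, yet every element of the generated ideal has a term divisible by $x$, so $0$ is not in it. Because there is no cancellation in the tropical semiring, having a finite set of polynomials with no common tropical zero does not force the ideal they generate to contain a monomial, let alone $0$. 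So your plan of ``reduce to finitely generated and finish with GP'' collapses precisely at the point you flagged as the main obstacle.

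What Grigoriev--Podolskii actually supply (Theorem~4(i) in their paper, as invoked in the proof of Theorem~\ref{t:tropicalnullstellensatz}) is a \emph{dual} statement: after shifting the $f_i$ by monomials up to some large degree $d$ and viewing the resulting coefficient vectors $\mathcal F_d \subseteq \Rbar^{\mon_{\leq d}}$, emptiness of the common variety forces emptiness of the dual set $\mathcal F_d^{\perp}$, the locus of $\mathbf y$ at which every vector in $\mathcal F_d$ achieves its minimum twice. That is a combinatorial/geometric certificate, not an algebraic membership certificate. The step that converts it into $0 \in I$ is where the tropical-ideal hypothesis does essential work: since $\mathcal F_d \subseteq \mathcal J_d$ and $\mathcal J_d$ (the degree-$\leq d$ part of $J = I\cap\Rbar[x_1,\dots,x_n]$) is the set of vectors of a valuated matroid $\M_{\leq d}(J)$, the empty set $\mathcal J_d^{\perp}$ is a tropical linear space, and a tropical linear space is empty precisely when the matroid has a loop --- i.e., when $J$ contains a monomial. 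Multiplying by the inverse monomial in the Laurent ring gives $0 \in I$. Your reduction to a finitely generated ideal discards exactly the matroidal structure needed for this last implication, so you would need to keep $I$ in the picture rather than replacing it by $\langle f_1,\dots,f_k\rangle$.
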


Several of our results for tropical ideals imply the analogous versions
for classical ideals, simply by considering tropical ideals of the
form $\trop(J)$ for $J \subseteq \K[x_1,\dots,x_n]$. Since our
arguments for tropical ideals are all of combinatorial nature, our
approach has the appealing feature of providing completely
combinatorial proofs for some important well-known statements, such as
the existence of Gr\"obner complexes for classical homogeneous ideals and 
the existence of finite tropical bases for classical ideals. 
This suggests that more of standard commutative
algebra can be tropicalized and extended in this manner.

We conclude with a brief description of how the paper is organized.
The basics of valuated matroids and tropical ideals are explained in
Section~\ref{s:2}. Some Gr\"obner theory for tropical ideals is
developed in Section~\ref{s:3}, together with a discussion on Hilbert
functions and a proof of Theorem~\ref{t:acc}
(Theorem~\ref{p:ascendingchain}). Tropical subschemes of tropical toric
varieties are introduced in Section~\ref{s:4}.  Finally, in
Section~\ref{s:5} we prove Theorem~\ref{t:polyhedralcomplexintro}
(Theorem~\ref{t:polyhedralcomplex}), as well as the existence of finite
tropical bases (Theorem~\ref{t:TropicalBasis}) and Theorem
\ref{t:tropicalnullstellensatzintro}
(Theorem~\ref{t:tropicalnullstellensatz}).

\noindent {\bf Acknowledgments}.  We thank Jeff Giansiracusa for
several useful conversations, and Paolo Tripoli for insight into
Example~\ref{e:arbitrary}. The first author was partially supported by
EPSRC grant EP/I008071/1 and a Leverhulme Trust International Academic Fellowship.  The second author was supported by the
Research Council of Norway grant 239968/F20.

\section{Tropical Ideals}

\label{s:2}

In this section we introduce  tropical ideals, together with several 
examples. 
We first recall some basics of valuated matroids.  

Throughout this paper we take $(\R, \tplus, \ttimes)$ to be a
semifield, with the following extra properties.  For compatibility
with tropical notation we write $\infty$ for the additive identity of
$\R$, and $\R^*$ for $\R \setminus \{ \infty \}$.  We require that $(\R^*,
\ttimes)$ is a totally ordered group, and that the
addition $\tplus$ satisfies $a \tplus b = \min(a,b)$.  Under these hypotheses,
$R$ is sometimes called a valuative semifield. We extend the
total ordering on the multiplicative group to all of $\R$ by making
$\infty$ the largest element.  

The main example for us will be the
tropical semiring (or min-plus algebra) $\Rbar$:
\[
\Rbar := (\mathbb R \cup \{ \infty \}, \tplus, \ttimes), \text{ where }
\tplus \,\, := \min \,\, \text{ and } \,\, \ttimes := +.
\]
Here the ordering on the
multiplicative group $\mathbb R$ is the standard one.  Another
important example is the Boolean subsemiring $\mathbb B$ of $\Rbar$:
$$\mathbb B := (\{ 0, \infty \}, \tplus, \ttimes).$$ 

An ideal in a semiring $R$ is a nonempty subset of $R$ closed under addition and under
multiplication by elements of $R$.

We denote by 
$\R[x_0, \dots,x_n]$
the semiring of polynomials in the variables
$x_0,\dots,x_n$ with coefficients in $\R$. 
Note that in the case that $R = \Rbar$, 
elements of $\Rbar[x_0,\dots,x_n]$ 
are regarded as 
polynomials and not functions; for example, the 
polynomials
$f(x) = x^2 \tplus 0$ and $g(x)= x^2 \tplus 1 \ttimes x \tplus 0$ are
distinct, even though $f(w)=g(w)$ for all $w \in \mathbb R$.  
The {\em support} of a 
polynomial $f = \bigoplus a_{\mathbf{u}}
\ttimes \mathbf{x}^{\mathbf{u}}$ is
\[
\supp(f) := \{\mathbf{u} \in \mathbb N^{n+1}: a_{\mathbf{u}} \neq \infty\}. 
\]
We call $a_{\mathbf{u}}$ the coefficient in $f$ of the monomial 
$\mathbf{x}^{\mathbf{u}}$. 

\subsection{Valuated matroids}

Valuated matroids are a generalization of the notion of matroids, 
introduced by Dress and Wenzel in \cite{DressWenzel}. 
We recall some of the necessary background on valuated matroids and 
tropical linear spaces; for basics of standard matroids, see, 
for example, \cite{Oxley}.

Let $E$ be a finite set, and let $r \in \mathbb N$.  Denote by
$\binom{E}{r}$ the collection of subsets of $E$ of size $r$.  A {\em
  valuated matroid} on the ground set $E$ with values in the
semifield $R$ is a pair $\M = (E,p)$ where $p : {\binom{E}{r}}
\rightarrow R$ satisfies the following properties:
\begin{itemize}
\item There exists $B \in \binom{E}{r}$ such that $p(B) \neq \infty$. 
\item {\em Valuated basis exchange axiom:} For every $A, B \in \binom{E}{r}$ and 
every 
$a \in A \setminus B$ there exists $b \in B \setminus A$ with 
\begin{equation}
\label{eqtn:tropicalplucker}
p(A) \ttimes p(B) \geq p(A \cup b - a)\ttimes  p(B \cup a - b).
\end{equation}
\end{itemize}
In the case that $R = \Rbar$, the valuated basis exchange axiom is
equivalent to the tropical Pl\"ucker relations; see, for instance,
\cite{TropicalBook}*{\S 4.4}.  If $\M = (E,p)$ is a valuated matroid,
its {\em support} $\{ B \in \binom{E}{r} : p(B) \neq \infty \}$ is the
collection of bases of a rank $r$ matroid on the ground set $E$,
called the {\em underlying matroid} $\underline{\M}$ of $\M$.  The
function $p$ is called the {\em basis valuation function} of $\M$.  We
consider the basis valuation functions $p$ and $\lambda \ttimes p$ with
$\lambda \in R^*$ to be the same valuated matroid.

As for ordinary matroids, valuated matroids have several
different ``cryptomorphic'' definitions, some of which we now recall.
For more information, see \cite{MurotaTamura}. 

Let $\M$ be a valuated
matroid on the ground set $E$ with basis valuation function $p : 
\binom{E}{r} \to \R$.  Given a basis $B$ of $\underline \M$ and an
element $e \in E \setminus B$, the (valuated) \emph{fundamental circuit}
$H(B,e)$ of $\M$ is the element of the $R$-semimodule $\R^E$ 
whose coordinates are given by
\begin{equation} \label{eqtn:fundamentalcircuit}
H(B,e)_{e'} := p(B \cup e - {e'}) / p(B) \qquad \text{for any ${e'} \in E$,}
\end{equation}
where $/$ denotes division in the 
semifield $R$ (subtraction in the case of $\Rbar$), 
$p(B')= \infty$ if $|B'|>r$, and $\infty/\lambda = \infty$ 
for any $\lambda \in \R^*$.  A (valuated) \emph{circuit} of $\M$ is any
vector in $\R^E$ of the form $\lambda \ttimes H(B,e)$, where $B$ is
a basis of $\underline \M$, $e \in E \setminus B$, and $\lambda \in
\R^*$.  We denote by $\C(\M)$ the collection of all circuits of
$\M$.  For any $H \in \R^E$, its {\em support} is defined as
\[
\supp(H) := \{e \in E: H_e \neq \infty\}. 
\]
The set of supports of the circuits of $\M$ is equal to 
the set of circuits of the underlying matroid $\underline{\M}$.
Furthermore, if two circuits $G$ and $H$ of $\M$ have the same support
then there exists $\lambda \in \R^*$ such that $G = \lambda
\ttimes H$ \cite{MurotaTamura}*{Theorem 3.1 (VC$3_e$)}. 

Collections of circuits of valuated matroids can be intrinsically 
characterized by axioms that generalize the classical circuit
axioms for matroids; see \cite{MurotaTamura}*{Theorem 3.1}.
The most important one is the following elimination property.
\begin{itemize}
\item  
{\it Circuit elimination axiom:} 
For any $G, H \in \C(\M)$ and any 
$e,{e'} \in E$ such that 
$G_e = H_e \neq \infty$ and $G_{e'} < H_{e'}$, there exists $F \in \C(\M)$ satisfying
$F_e = \infty$, $F_{e'} = G_{e'}$, and $F \geq G \tplus H$.
\end{itemize}
Here $F \geq F'$ if $F_e \geq F'_e$ for
all $e$, and $(G \tplus H)_e = G_e \tplus H_e$.

We also use the vector formulation for
valuated matroids, which generalizes the notion of cycles for
matroids.  A {\em cycle} of a matroid is a union of circuits. A
\emph{vector} of a valuated matroid is an element of the subsemimodule
of $\R^E$ generated by the valuated circuits. More explicitly, the set
of vectors of $\M$ is
\[
 \V(\M) := \{ {\textstyle \bigoplus_{H \in \C(\M)} \, \lambda_H \ttimes H} : 
\lambda_H \in \R
\text{ for all } H \}.
\]
Vectors of a valuated matroid can also be characterized by axioms;
see \cite{MurotaTamura}*{Theorem 3.4}. In fact, a subset $\V \subseteq R^E$ 
is the set of vectors of a valuated matroid if and only if it 
is a subsemimodule of $R^E$ satisfying the following property.
\begin{itemize}
\item
{\it Vector elimination axiom:} 
For any $G, H \in \V$ and any $e \in E$ such that 
$G_e = H_e \neq \infty$, there exists $F \in \V$ satisfying
$F_e = \infty$, $F \geq G \tplus H$, and $F_{e'} = G_{e'} \tplus H_{e'}$ for all 
${e'} \in E$ such that $G_{e'} \neq H_{e'}$.
\end{itemize}

In the case that $R = \mathbb B$, valuated matroids are simply matroids, as
there is no additional information encoded in the valuation.  In this
case, valuated circuits and vectors are in one-to-one correspondence
with circuits and cycles, respectively,  of the underlying matroid $\underline{\M}$.

In the case that $R = \Rbar$, the set of vectors of a valuated matroid
is also called a {\em tropical linear space} in the tropical
literature. In the terminology used in \cite{TropicalBook}*{\S 4.4},
if $p$ is the basis valuation function of a rank-$r$ valuated matroid
$\M$ then $\V(\M)$ is the tropical linear space $L_{p^\perp}$, where
$p^\perp$ is the dual tropical Pl\"ucker vector given by $p^{\perp}(B) := p(E
\setminus B)$.  

\subsection{Tropical ideals}

We now introduce the main object of study of this paper.  Let $\mon_d$
be the set of monomials of degree $d$ in the variables
$x_0,\dots,x_n$.  We will identify elements of $\R^{\mon_d}$ with
homogeneous polynomials of degree $d$ in $\R[x_0,\dots,x_n]$.  In this
way, if $\M$ is a valuated matroid on the ground set $\mon_d$,
circuits and vectors of $\M$ can be thought of as homogeneous
polynomials in $\R[x_0, \dotsc, x_n]$ of degree $d$.

\begin{definition}\label{d:tropicalidealprojective}
({\em Homogeneous tropical ideals.)}
A \emph{homogeneous tropical ideal} is a homogeneous ideal $I \subseteq \R[x_0,\dots,x_n]$ such that
for each $d \geq 0$ the degree-$d$ part $I_d$ is
the collection of vectors of a valuated matroid $\M_d$ on $\mon_d$.
If $I \subseteq \R[x_0,\dots,x_n]$ is a homogeneous tropical ideal, 
we will denote by $\M_d(I)$ the valuated 
matroid such that $I_d = \V(\M_d(I))$.
\end{definition}

This definition is consistent with Definition \ref{d:tropicalideal} 
in the introduction, in view of the characterization of vectors of a 
valuated matroid in terms of the vector elimination axiom.

Not all homogeneous ideals in $R[x_0,\dots,x_n]$ are tropical ideals. 
As an example, 
consider the ideal $I$ in $\Rbar[x,y]$ generated by $x \tplus y$.
The degree-two part of this ideal is the $\Rbar$-semimodule generated
by $x^2 \tplus xy$, and $xy \tplus y^2$. 
This is not the set of vectors of a valuated
matroid on $\mon_2 = \{x^2,xy,y^2\}$, as the polynomial $x^2 \tplus y^2$ 
would be required to be in $I$ by the vector elimination axiom 
applied to the two generators.

Homogeneous tropical ideals in the sense of
Definition~\ref{d:tropicalidealprojective} will define subschemes of
tropical projective space.  We elaborate on this definition with
some examples.

\begin{example}({\em Realizable tropical ideals.})
Let $\K$ be a field equipped with a valuation $\val: \K \to R$. 
Any polynomial $g \in \K[x_0, \dotsc, x_n]$ gives rise to a ``tropical''
polynomial $\trop(g) \in R[x_0, \dotsc, x_n]$ by interpreting 
all operations tropically 
and replacing coefficients by their valuations:
if $g = \sum c_{\mathbf{u}} \cdot \mathbf{x}^{\mathbf{u}}$, then 
$\trop(g) := \bigoplus \val(c_{\mathbf{u}}) \ttimes \mathbf{x}^\mathbf{u}$.
For any ideal $J \subseteq \Kx$, its tropicalization is the ideal
\[
\trop(J) := \langle \trop(g) : g \in J \rangle \subseteq R[x_0, \dotsc ,x_n].
\]

If $f$ is a polynomial of minimal support in $\trop(J)$, there exists $g
\in J$ and $\lambda \in R^*$ such that $f = \lambda \ttimes \trop(g)$.  
Indeed, if $f = \sum \lambda_i \ttimes 
\mathbf{x}^{\mathbf u_i} \ttimes \trop(g_i)$ with $g_i \in
J$ for all $i$, then $f = \sum \lambda_i \ttimes \trop(\mathbf{x}^{\mathbf{u}_i} g_i)$. Since
there is no cancellation in $R$, we have $\supp(\mathbf{x}^{\mathbf{u}_i} g_i) \subseteq \supp(f)$
for all $i$, and thus $\supp(\mathbf{x}^{\mathbf{u}_i} g_i) = \supp(f)$ for all $i$.
As all the $\mathbf{x}^{\mathbf{u}_i} g_i$ are of minimal support in $J$, 
this implies that all of them are scalar multiples of each other,
and thus $f = \lambda \ttimes \trop(g)$ for some $g \in J$.  

It follows that if $J \subseteq \Kx$ is a homogeneous ideal, for any
$d \geq 0$ the degree-$d$ polynomials in $\trop(J)$ of minimal support
satisfy the circuit elimination axiom of valuated matroids.  The other circuit axioms are immediate, so
$\trop(J)$ is a homogeneous tropical ideal.  A tropical ideal arising
in this way is called {\em realizable} (over the field $\K$).
\end{example}

\begin{remark}
If $J \subseteq \Kx$ is a homogeneous ideal, 
the degree-$d$ part $\trop(J)_d$ of the homogeneous tropical ideal $\trop(J)$ 
is the $R$-semimodule generated by the polynomials $\trop(g)$ 
with $g \in J_d$.
Moreover, if the residue field of $\K$ is infinite then every polynomial 
in $\trop(J)_d$ is a scalar multiple of some $\trop(g)$ with $g \in J_d$, as
$\trop(g) \tplus \trop(h) = \trop(g + \alpha h)$ for a sufficiently
general $\alpha \in K$ with $\val(\alpha) =0$.
If, in addition, the value group $\Gamma := \im \val$ of $K$ equals
all of $R$ then every polynomial in $\trop(J)_d$ is of the form $\trop(g)$ 
for $g \in J_d$.

When the residue field is finite, however, it is possible for the
underlying matroid $\underline{\M_d(\trop(J))}$ to have some cycles that
are not the support of any polynomial in $J$.  For example, consider
$K=\mathbb Z/2\mathbb Z$ equipped with the trivial valuation $\val: K
\to \mathbb B$, and the ideal $J := \langle x + y \rangle
\subseteq K[x,y]$. In this case there is no polynomial in $J_2$
having support $D := \{x^2, xy, y^2\}$, even though the tropical polynomial $x^2 \tplus
x y \tplus y^2$ is in $\trop(J)_2$ and $D$ is a cycle of
$\underline{\M_2(\trop(J))}$.
\end{remark}

A special class of realizable homogeneous tropical ideals consists of the
tropical equivalent of the homogeneous ideal of a point in projective space.

\begin{example}\label{e:idealofpoint}({\em The homogeneous tropical ideal of a point.}) 
Fix $\mathbf{a} = (a_0,a_1,\dotsc,a_n) \in \trop(\mathbb P^n) =
(\Rbar^{n+1} \setminus \{ (\infty,\dots,\infty) \})/\mathbb R
(1,\dots, 1)$.  Let $I_{\mathbf{a}}$ be the ideal generated by all
homogeneous polynomials $f \in \Rbar[x_0,\dots,x_n]$ for which
$\mathbf{a} \in V(f)$, so $f(\mathbf{a}) = \infty$ or the minimum in
$f(\mathbf{a})$ is achieved at least twice.  We claim that
$I_{\mathbf{a}}$ is a tropical ideal.  In addition, if $K$ is a valued
field with $a_i$ in the image of the valuation for each $0 \leq i \leq
n$, then $I_{\mathbf{a}}$ is the tropicalization of {\em any} ideal
$J_{\alpha}$ of a point $\alpha \in \mathbb P^{n}_K$ with
$\val(\alpha_i)=a_i$.

To prove the first claim, we first note that $I_{\mathbf{a}}$ is
generated as an $\Rbar$-semimodule by the set $\mathcal P$ of
polynomials of the form $(\mathbf{a} \cdot \mathbf v) \ttimes \mathbf
x^{\mathbf u} \tplus (\mathbf{a} \cdot \mathbf u) \ttimes \mathbf
x^{\mathbf v}$ with $\deg(\mathbf x^{\mathbf u})=\deg(\mathbf
x^{\mathbf v})$ and $\mathbf u \neq \mathbf v$, where by convention we
take $\infty$ times $0$ equal to $0$ when some of the $a_i$ are equal
to $\infty$.  Indeed, all these binomials (and monomials, in the case
some of the $a_i$ equal $\infty$) are contained in $I_{\mathbf{a}}$.
Suppose that $f = \bigoplus c_{\mathbf u} \ttimes \mathbf x^{\mathbf
  u} \in I_{\mathbf{a}}$ is outside the ideal generated by $\mathcal
P$.  We may assume that $f$ has been chosen to have as few terms as
possible.  We may also assume that $f(\mathbf{a}) < \infty$, as
otherwise all terms of $f$ lie in the ideal generated by $\mathcal P$.
Fix $c_{\mathbf v} \ttimes \mathbf x^{\mathbf v}$ to be a term of $f$
such that $c_{\mathbf v} + \mathbf{a} \cdot \mathbf v > f(\mathbf{a})$
if one exists, or any term of $f$ otherwise.  Take $\mathbf v' \neq
\mathbf v$ with $c_{\mathbf v'} + \mathbf{a} \cdot \mathbf v' =
f(\mathbf{a})$, and set $g = \bigoplus_{\mathbf u \neq \mathbf v}
c_{\mathbf u} \ttimes \mathbf x^{\mathbf u}$.  Then $f = g \tplus
(c_{\mathbf v}- \mathbf{a}\cdot \mathbf v') \ttimes ((\mathbf{a} \cdot
\mathbf v') \ttimes \mathbf x^{\mathbf v} \tplus (\mathbf{a} \cdot
\mathbf v) \ttimes \mathbf x^{\mathbf v'})$.  In the case that the
minimum in $f(\mathbf{a})$ is not achieved at $\mathbf x^{\mathbf v}$,
the minimum in $g(\mathbf{a})$ is still achieved at least twice. If
the minimum in $f(\mathbf a)$ is achieved at all its terms, $f$ must
have at least three terms, as otherwise it would be a scalar multiple
of one of the polynomials in $\mathcal P$. The minimum in $g(\mathbf
a)$ is thus still achieved twice.  Furthermore, $g$ has fewer terms
than $f$, so by assumption $g$ lies in the $\Rbar$-semimodule
generated by $\mathcal P$, and thus so does $f$.  This proves that
$I_{\mathbf{a}}$ is generated by $\mathcal P$.  In addition, the
degree-$d$ polynomials of minimal support in $\mathcal P$ satisfy the
valuated circuit elimination axiom, which implies that
$(I_{\mathbf{a}})_d$ is the set of vectors of a valuated matroid
\cite{MurotaTamura}*{Theorem~3.4}, and thus $I_{\mathbf{a}}$ is a
tropical ideal.

Suppose now that $\alpha \in \mathbb P^{n}_K$ satisfies $\val(\alpha) =
\mathbf{a}$.  The homogeneous ideal $J_{\alpha} \subseteq K[x_0, \dotsc,
  x_n]$ of the point $\alpha$ contains the binomials $\alpha^{\mathbf
  v} \mathbf x^{\mathbf u} - \alpha^{\mathbf u} \mathbf x^{\mathbf v}$
for all pairs $\mathbf x^{\mathbf u}, \mathbf x^{\mathbf v}$ with
$\deg(\mathbf x^{\mathbf u}) = \deg(\mathbf x^{\mathbf v})$, so
$I_{\mathbf{a}} \subseteq \trop(J_{\alpha})$.  If the inclusion were
proper, there would be $h \in \trop(J_{\alpha})$ with $h \notin
I_{\mathbf a}$.  Write $h = \sum h_i \ttimes \trop(g_i)$, where $g_i \in
J_{\alpha}$.  Since $\mathbf{a} = \val(\alpha) \in \trop(V(g_i)) =
V(\trop(g_i))$, this contradicts that $\mathbf{a} \not \in V(h)$.
\end{example}

One can think of a homogeneous tropical ideal as the ``tower'' 
of valuated matroids that determine its various homogeneous parts, 
as described in the following definition.

\begin{definition}({\em Compatible valuated matroids.})
Let $\mathcal S = (\M_d)_{d \geq 0}$ be a sequence of valuated
matroids with values in the semiring $R$, where the ground set of
$\M_d$ is the set of monomials $\mon_d$.  The sequence $\mathcal S$ is
called a {\em compatible} sequence if the $\R$-subsemimodule $I$ of $R[x_0,\dots,x_n]$
generated by $\{ \V(\M_d) : d \geq 0 \}$
is a homogeneous ideal.
In other words, $\mathcal S$ is compatible if
for any $d \geq 0$ and any variable $x_i$ we have
\[ x_i \ttimes \V(\M_d) := \{ x_i \ttimes H : H \in \V(\M_d) \} \subseteq 
\V(\M_{d+1}),\]
or equivalently, for any circuit $G \in \C(\M_d)$ we have that $x_i \ttimes G$
is a sum of circuits of $\M_{d+1}$.
\end{definition}

Compatibility of valuated matroids can be simply described in terms 
of their basis valuation functions.

\begin{proposition}({Compatibility in terms of basis valuations.})
Let $\mathcal S = (\M_d)_{d \geq 0}$ be a sequence of valuated matroids,
where $\M_d = (\mon_d, p_d)$ has rank $r_d$.
The sequence $\mathcal S$ is a compatible sequence if and only if for any $d \geq 0$,
any variable $x_i$, any $U \subseteq \mon_d$ 
of size $r_d+1$, and any $V \subseteq \mon_{d+1}$ of size $r_{d+1}-1$, we have 
that 
\[
\min_{\mathbf{x}^\mathbf{u} \in x_iU \setminus V} \,\, p_d(U - 
\mathbf{x}^\mathbf{u}/x_i) \ttimes 
p_{d+1}(V \cup \mathbf{x}^\mathbf{u}) \,\,\text{ is attained at least twice.}
\]
\end{proposition}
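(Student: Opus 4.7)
The plan is to reduce both implications to a single explicit coefficient computation, using two standard descriptions of the $R$-semimodule of vectors of a valuated matroid. First, by the formula~\eqref{eqtn:fundamentalcircuit} for fundamental circuits, every circuit of $\M_d$ is, up to tropical scaling, equal to
\[
c_U \;:=\; \bigoplus_{\mathbf{x}^{\mathbf{v}} \in U} p_d(U - \mathbf{x}^{\mathbf{v}}) \ttimes \mathbf{x}^{\mathbf{v}}
\]
for some $(r_d+1)$-subset $U \subseteq \mon_d$; if $U$ contains no basis of $\underline{\M_d}$ then $c_U$ is the zero polynomial $\infty$. Second, the identification $\V(\M_d) = L_{p_d^\perp}$ noted at the end of Section~2.1, combined with the standard description of a tropical linear space via its Plücker-style incidence relations on $(|\mon_d|-r_d+1)$-subsets, gives (after complementation) that a polynomial $w \in R[x_0,\dots,x_n]_d$ lies in $\V(\M_d)$ if and only if for every $(r_d-1)$-subset $W \subseteq \mon_d$,
\[
\min_{\mathbf{x}^{\mathbf{v}} \in \mon_d \setminus W} \bigl\{ [w]_{\mathbf{x}^{\mathbf{v}}} \ttimes p_d(W \cup \mathbf{x}^{\mathbf{v}}) \bigr\}
\]
is attained at least twice (or is $\infty$); the analogous statement holds for $\V(\M_{d+1})$.

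Since $\V(\M_{d+1})$ is an $R$-subsemimodule of $R[x_0,\dots,x_n]_{d+1}$ and $\V(\M_d)$ is generated over $R$ by the circuits of $\M_d$, the compatibility condition $x_i \ttimes \V(\M_d) \subseteq \V(\M_{d+1})$ is equivalent to requiring that $x_i \ttimes c_U \in \V(\M_{d+1})$ for every variable $x_i$, every $d \geq 0$, and every $(r_d+1)$-subset $U \subseteq \mon_d$. The key observation is that the coefficient of $\mathbf{x}^{\mathbf{u}}$ in $x_i \ttimes c_U$ equals $p_d(U - \mathbf{x}^{\mathbf{u}}/x_i)$ when $\mathbf{x}^{\mathbf{u}} \in x_i U$, and $\infty$ otherwise. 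Applying the $(r_{d+1}-1)$-subset characterization above to $x_i \ttimes c_U$, with $V$ playing the role of $W$, the $\infty$-entries drop out and $x_i \ttimes c_U \in \V(\M_{d+1})$ becomes equivalent to
\[
\min_{\mathbf{x}^{\mathbf{u}} \in x_i U \setminus V} \bigl\{ p_d(U - \mathbf{x}^{\mathbf{u}}/x_i) \ttimes p_{d+1}(V \cup \mathbf{x}^{\mathbf{u}}) \bigr\} \;\text{being attained at least twice}
\]
for every $(r_{d+1}-1)$-subset $V \subseteq \mon_{d+1}$. This is exactly the proposition's condition, so both implications follow at once.

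The main technical point will be deploying the dual-Plücker characterization of $\V(\M_d)$ carefully: converting $L_{p_d^\perp}$ from the natural formulation on $(|\mon_d|-r_d+1)$-subsets of $\mon_d$ to the form above on $(r_d-1)$-subsets via complementation, and consistently interpreting ``attained at least twice'' vacuously when every entry of the minimum equals $\infty$ (which is frequent, since many basis valuations of $p_d$ and $p_{d+1}$ are infinite, as are the coefficients of $c_U$ outside $U$). Once this is in place, the biconditional reduces cleanly to the single explicit coefficient computation for $x_i \ttimes c_U$.
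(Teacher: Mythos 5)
Your proof is correct, but it takes a genuinely different route from the paper's. The paper first observes that $x_i \ttimes \V(\M_d)$ is itself the vector set of a valuated matroid $\M_{d,i}$ on $\mon_{d+1}$ (whose basis valuation is concentrated on bases containing $\mon_d^{\hat i}$), rephrases compatibility as the tropical linear space containment $\V(\M_{d,i}) \subseteq \V(\M_{d+1})$, and then invokes Haque's Theorem~1 to translate that containment into a condition on the two Plücker vectors, which it then asserts ``reduces to'' the stated one. You instead avoid constructing the auxiliary matroid entirely: you reduce the containment to checking the circuit generators $x_i \ttimes c_U$ of $x_i \ttimes \V(\M_d)$ against the incidence (dual-Plücker, or ``tropically orthogonal to cocircuits'') characterization of $\V(\M_{d+1})$, and the stated condition drops out directly from the coefficient formula $[x_i \ttimes c_U]_{\mathbf{x}^{\mathbf{u}}} = p_d(U - \mathbf{x}^{\mathbf{u}}/x_i)$ on $x_iU$ and $\infty$ off it. Your argument is more self-contained (no appeal to Haque), more elementary, and makes the final reduction explicit rather than leaving it to the reader; the paper's approach, on the other hand, slots the statement into a general framework for comparing tropical linear spaces via their Plücker vectors, which may be more illuminating if one already has that machinery in mind. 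One small thing to verify explicitly in a final write-up is the point you flag at the end: the vacuous reading of ``attained at least twice'' when all entries are $\infty$, which is needed both when $U$ contains no basis (so $c_U \equiv \infty$) and when no element of $x_iU \setminus V$ extends $V$ to a basis of $\M_{d+1}$; your interpretation is the correct one and matches the convention implicit in the proposition's statement.
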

\begin{proof}
For any $d \geq 0$ and any variable $x_i$, the set $x_i \ttimes \V(\M_d)$ is the 
collection of vectors of a valuated matroid on the ground set $\mon_{d+1}$. 
Indeed, if we let $\mon_d^{\hat i}$ be the set 
of monomials in $\mon_{d+1}$ not divisible by $x_i$, the collection 
$x_i \ttimes \V(\M_d)$ is the set of vectors of the valuated matroid $\M_{d,i}$
on $\mon_{d+1}$ with basis valuation function
\[
 p_{d,i} (B) = 
 \begin{cases}
  p_d((B \setminus \mon_d^{\hat i})/x_i) &\text{if } B \supseteq \mon_d^{\hat i}, \\
  \infty &\text{ otherwise.}
 \end{cases} 
\]
The sequence $\mathcal S$ is compatible 
if and only if for every $d \geq 0$ and any variable $x_i$ 
there is a containment of tropical linear
spaces $\V(\M_{d,i}) \subseteq \V(\M_{d+1})$, or dually, $\V(\M_{d,i})^{\perp} 
\supseteq \V(\M_{d+1})^{\perp}$. By \cite{Haque}*{Theorem~1}, this is equivalent
to the condition that for any $V, V' \subseteq \mon_{d+1}$ 
with $|V| = |\mon_d^{\hat i}|+r_d+1$ and $|V'| = r_{d+1}-1$,
\[
 \min_{\mathbf{v} \in V\setminus V'} \,\, p_{d,i}(V - \mathbf{v}) \ttimes 
 p_{d+1}(V' \cup \mathbf{v}) \,\, \text{ is attained at least twice,}
\]
which reduces to the desired condition.
\end{proof}

The next example shows that tropical ideals carry strictly more information
than their varieties.
\begin{example}({\em Two tropical ideals with the same variety.})
\label{e:samevariety}
Let $K$ be the field $\mathbb C$ equipped with the trivial valuation
$\val: \mathbb C \to \Rbar$.  Consider the principal ideals $J =
\langle (x + y + z)(xy+ xz+yz) \rangle$ and $J' = \langle
(x+y)(x+z)(y+z) \rangle$ in $\mathbb C[x,y,z]$. Their tropicalizations
$I := \trop(J)$ and $I' := \trop(J')$ are homogeneous tropical ideals
in $\Rbar[x,y,z]$ with the same variety. This is shown in Figure
\ref{f:irreddecomp}, together with the two associated tropical
irreducible decompositions corresponding to tropicalizing the factors
in the generators of $J$ and $J'$.
\begin{figure}[htb]
 \begin{center}
  \includegraphics[scale=0.55]{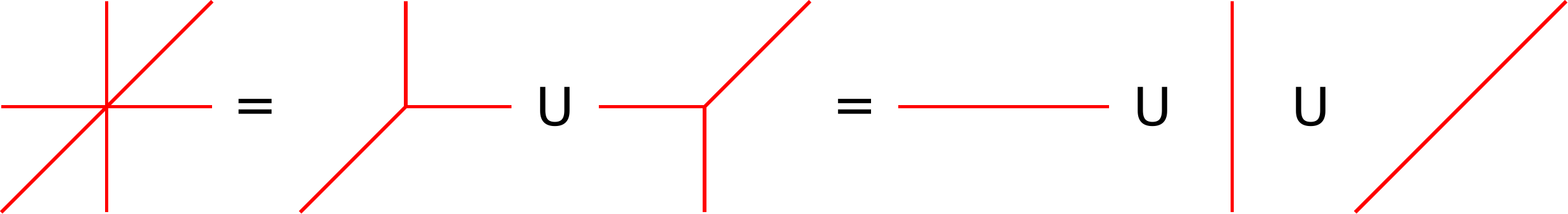}
	 \caption{   \label{f:irreddecomp}
Two tropical irreducible decompositions.}
 \end{center}
\end{figure}
However, even though the tropical ideals $I$ and $I'$
coincide up to degree 3, they are distinct.  For instance, the degree
4 polynomial
\[
 f := x^3 y \tplus x^3 z \tplus x y^3 \tplus y^3  z 
 \tplus x  z^3 \tplus y  z^3 \tplus x  y^2  z \tplus
 x  y  z^2 \tplus y^2  z^2
\]
is equal to $\trop((x+y)(x+z)(y+z)(x-y-z)) \in I'$, but $f$ is not 
in $I$: a simple computation shows that no polynomial of the form 
$(x + y + z)(xy+ xz+yz)(ax+by+cz)$ can have support equal to the support of $f$. 
\end{example}

We finish this section with an example of a tropical ideal that is not 
realizable over any field $\K$.
\begin{example}\label{ex:non-realizable}
({\em A non-realizable tropical ideal.})
For any $n \geq 2$, we give an example of a homogeneous tropical ideal in 
$\Rbar[x_0, \dotsc, x_n]$ that is not realizable over any valued field.
Its variety is, however, 
the standard tropical line in tropical projective space;
see Example \ref{e:varietyofnonrealizable}.

For $d \geq 0$, let $\M_d$ be the rank-($d+1$) valuated matroid 
on the ground set $\mon_d$ whose basis valuation function 
$p_d : \binom{\mon_d}{d+1} \to \Rbar$ is given by
\[
p_d(B) := 
\begin{cases}
0 & \text{if for any } k \leq d \text{ and any } 
\mathbf{x}^\mathbf{v} \in \mon_k \text{ we have } \\
& \qquad \left|\{\mathbf{x}^\mathbf{u} \in B :
\mathbf{x}^\mathbf{v} \text{ divides } \mathbf{x}^\mathbf{u} \}\right| 
\leq d - k + 1,\\
\infty & \text{otherwise}.
\end{cases}
\]
Geometrically, if we think of $\mon_d$ as the set of lattice points 
inside a simplex of size $d$, 
the function $p_d$ assigns the value $0$ precisely to those $(d+1)$-subsets $B$
of $\mon_d$ such that for any $k \leq d$, the subset $B$ contains 
at most $d-k+1$ monomials from any simplex in $\mon_d$ of size $d-k$.
A proof that the underlying matroids $\underline{\M_d}$ are indeed matroids 
(and thus the $\M_d$ are valuated matroids) can be found in 
\cite{AB}*{Theorem~4.1}, where these matroids are studied 
in connection to flag arrangements.

The circuits of $\M_d$ are the tropical polynomials of the form $H =
\lambda \ttimes \bigoplus_{\mathbf u \in C} \mathbf{x}^\mathbf{u}$
with $\lambda \in \mathbb R$ and $C$ an inclusion-minimal subset of
$\mon_d$ satisfying $|C| > d - \deg(\gcd(C)) + 1$.  This description
shows that if $H$ is a circuit of $\M_d$ and $x_i$ is any variable
then $x_i \ttimes H$ is a circuit of $\M_{d+1}$.  Thus $(\M_d)_{d\geq
  0}$ is a compatible sequence of valuated matroids, and so the
$\Rbar$-semimodule $I$ generated by all $\V(\M_d)$ 
is a homogeneous tropical ideal.

To show that $I$ is a non-realizable tropical ideal, note that the
tropical polynomial $f = x_0 \tplus x_1 \tplus x_2$ is a circuit of
$\M_1$, so in particular $f \in I$.  If $I$ is realizable then $I =
\trop(J)$ for some ideal $J \subseteq \K[x_0, \dots, x_n]$, and so,
since $f$ is a circuit, there exists some $g \in J$ such that $f =
\trop(g)$.  After a suitable scaling of the variables, we may assume
that $g = x_0 + x_1 + x_2$. The polynomial
\[
h := g \cdot (x_0^2 + x_1^2 + x_2^2 - x_0x_1 - x_0x_2 - x_1x_2) 
= x_0^3 + x_1^3 + x_2^3 - 3x_0x_1x_2
\]
is then a polynomial in $J$, and thus $\trop(h)$ lies in $I$. 
However, this contradicts the fact that
$\supp(\trop(h)) \subseteq \{x_0^3, x_1^3, x_2^3, 
x_0 x_1 x_2\}$ is an 
independent set in the underlying matroid $\underline{\M_3}$.
\end{example}

\section{Gr\"obner theory for tropical ideals}\label{sec:grobner}

\label{s:3}

In this section we develop a Gr\"obner theory for homogeneous tropical
ideals in $\Rbar[x_0,\dots,x_n]$ and $\mathbb B[x_0,\dots,x_n]$, 
and use it to prove some
basic properties of tropical ideals with coefficients in a more
general semiring $R$. These include the eventual polynomiality of
their Hilbert functions, and the fact that tropical ideals satisfy the
ascending chain condition.

We start by defining initial ideals of homogeneous ideals in 
$\Rbar[x_0, \dotsc, x_n]$
with respect to a weight vector $\mathbf{w} \in \Rbar^{n+1}$.

\begin{definition} \label{d:initialIdeal}({\em Initial ideals.})
Let $\mathbf w \in \Rbar^{n+1}$ with $\mathbf{w} \neq (\infty, \dots, \infty)$.
The \emph{initial term} of a tropical polynomial $f = \bigoplus a_{\mathbf{u}}
\ttimes \mathbf{x}^{\mathbf{u}} \in \Rbar[x_0,\dots,x_n]$ with respect
to $\mathbf{w}$ is the tropical polynomial in $\mathbb B[x_0,\dots,x_n]$
given by
\[\inn_{\mathbf{w}}(f) := 
\begin{cases}
\bigoplus_{a_{\mathbf{u}} + \mathbf{w} \cdot \mathbf{u} = 
f(\mathbf{w})} \mathbf{x}^{\mathbf{u}} & \text{if } f(\mathbf{w}) < \infty, \\
\infty & \text{otherwise.}
\end{cases}
\quad \in \mathbb B[x_0,\dots,x_n]
\] 
In computing the dot product  $\mathbf{w}
\cdot \mathbf{u}$ we follow the convention that $\infty$ times $a$ is 
$\infty$ for $a \neq 0$, and $\infty$ times  $0$ is $0$, so
$\mathbf{w} \cdot \mathbf{u} = \mathbf{w}^{\mathbf{u}}$.  
The \emph{initial ideal} $\inn_{\mathbf w}(I)$ 
of a homogeneous ideal $I$ in $\Rbar[x_0,\dots,x_n]$ is the homogeneous ideal
\[\inn_{\mathbf{w}}(I) := \langle \inn_{\mathbf{w}}(f) : f \in I
\rangle \subseteq \mathbb B[x_0,\dots,x_n].\]
If $I$ is a homogeneous ideal in $\mathbb B[x_0,\dots,x_n]$ then we
define its initial ideal $\inn_{\mathbf{w}}(I)$ as the initial ideal
of $I\Rbar[x_0,\dots,x_n]$, using the inclusion of $\mathbb B$ into $\Rbar$. 
\end{definition}

Note that $\inn_{\mathbf{w}}( a \ttimes f \tplus b \ttimes g) = 
\inn_{\mathbf{w}}(f)
\tplus \inn_{\mathbf{w}}(g)$ for $f, g \in I$ and $a,b \in \Rbar$ such that $a
\ttimes f(\mathbf{w}) = b \ttimes g(\mathbf{w}) < \infty$, so the
set of initial forms of a graded piece $I_d$ of $I$ is a semimodule
over $\mathbb B$.  As in addition $\inn_{\mathbf{w}}(x_i\ttimes f) = x_i \ttimes
\inn_{\mathbf{w}}(f)$, 
we have $\inn_{\mathbf{w}}(I_d) = (\inn_{\mathbf{w}}I)_d$.
Also, since $I$ is a homogeneous ideal, $\inn_{\mathbf{w}}(I) =
\inn_{\mathbf{w}+\lambda \mathbf{1}}(I)$ for $\mathbf{1} =
(1,1,\dots,1)$, so we may regard $\mathbf{w}$ as an element of
$\trop(\mathbb P^n)$.

\begin{remark}
Our definition of initial ideals is compatible with the usual
definition of initial ideals used in tropical geometry, in the sense
that for any homogeneous ideal $J \subseteq \K[x_0,\dots,x_n]$ we have
$\inn_\mathbf{w} (\trop(J)) = \trop(\inn_\mathbf{w} (J))$. The initial ideal
$\inn_{\mathbf{w}}(J)$ is an ideal in the polynomial ring with
coefficients in the residue field of $\K$, which has a trivial
valuation, so $\trop(\inn_{\mathbf{w}}(J))$ is naturally an ideal in
$\mathbb B[x_0,\dots,x_n]$.
\end{remark}

An important result in this section is that initial ideals of tropical
ideals are also tropical ideals. This will follow from the following
key fact about valuated matroids.

We first extend the notion of initial term to vectors of any valuated matroid 
over $\Rbar$. If $E$ is any finite set, 
$H \in \Rbar^E$, and $\mathbf w \in \Rbar^E$ with 
$\mathbf{w} \neq (\infty, \dotsc, \infty)$, 
the {\em initial term} of $H$ with respect to $\mathbf w$ is the subset of $E$
\[
\inn_{\mathbf w} H := \{ e \in E : H_e + w_e 
\text{ is minimal among all } e \in E \}
\]
if $\min(H_e + w_e) < \infty$, and $\inn_{\mathbf w} H := \emptyset$ otherwise. 

\begin{lemma}\label{lem:initial}
Let $\M = (E,p)$ be a rank-$r$ valuated matroid, 
where $p: \binom{E}{r} \to \Rbar$ is its basis valuation function,
and let $\mathbf w = (w_e)_{e \in E} \in \mathbb R^E$. Then 
\[
\textstyle \inn_{\mathbf w} \B(\M) := \left\{ B \in \binom{E}{r} : p(B) - 
\sum_{e \in B} w_e \text{ is minimal among all } B \in \binom{E}{r} \right\}
\]
is the collection of bases of an (ordinary) matroid $\inn_{\mathbf w} \M$ 
of rank $r$ on the ground set $E$.
Its circuits are the elements of 
$\inn_{\mathbf w}\C(\M) := \{ \inn_{\mathbf w} H  : H \in \C(\M) 
\}$ that are minimal with respect to inclusion, 
and its set of cycles is
\[
\inn_{\mathbf w}\V(\M) := \{ \inn_{\mathbf w} H  : H \in \V(\M) \}.
\]
\end{lemma}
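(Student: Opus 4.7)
The plan is to handle the three parts of the statement in sequence: the matroid structure on $\inn_{\mathbf{w}}\B(\M)$, the circuit description, and the cycle description.

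For the basis exchange axiom, take $A, B \in \inn_{\mathbf{w}}\B(\M)$ and $a \in A \setminus B$. The valuated basis exchange axiom for $\M$ produces $b \in B \setminus A$ with $p(A) + p(B) \geq p(A - a + b) + p(B + a - b)$. Since $\sum_{e \in A - a + b} w_e + \sum_{e \in B + a - b} w_e = \sum_{e \in A} w_e + \sum_{e \in B} w_e$, the fact that $A$ and $B$ both minimize $p(\cdot) - \sum_{e \in \cdot} w_e$ yields the opposite inequality, forcing equality throughout. Thus $A - a + b$ and $B + a - b$ both lie in $\inn_{\mathbf{w}}\B(\M)$, confirming ordinary basis exchange.

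To identify the fundamental circuits of $\inn_{\mathbf{w}}\M$, fix $B \in \inn_{\mathbf{w}}\B(\M)$ and $e \notin B$, and evaluate $\inn_{\mathbf{w}} H(B, e)$ using $H(B, e)_{e'} = p(B - e' + e) - p(B)$. At $e' = e$ the value $H(B, e)_e + w_e$ equals $w_e$, and for $e' \in B$ the minimality of $B$ yields $p(B - e' + e) - p(B) + w_{e'} \geq w_e$, with equality exactly when $B - e' + e \in \inn_{\mathbf{w}}\B(\M)$. Hence $\inn_{\mathbf{w}} H(B, e) = \{e\} \cup \{e' \in B : B - e' + e \in \inn_{\mathbf{w}}\B(\M)\}$ is the fundamental circuit of $\inn_{\mathbf{w}}\M$ at $(B, e)$. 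Since every circuit of a matroid arises as a fundamental circuit, every circuit of $\inn_{\mathbf{w}}\M$ already lies in $\inn_{\mathbf{w}}\C(\M)$.

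The heart of the argument is the cycle description. The key preliminary identity is the distributivity $\inn_{\mathbf{w}}\bigl(\bigoplus_i \lambda_i \ttimes H_i\bigr) = \bigcup_{i \in I} \inn_{\mathbf{w}} H_i$, where $I$ indexes the terms realizing $\min_i(\lambda_i + \min_{e'}((H_i)_{e'} + w_{e'}))$; this reduces both inclusions to the case of circuits. For $\supseteq$, any union of circuits $\bigcup_j D_j$ of $\inn_{\mathbf{w}}\M$ is realized as $\inn_{\mathbf{w}} H$ by taking $H = \bigoplus_j \lambda_j \ttimes H_j$ with $\inn_{\mathbf{w}} H_j = D_j$ and the $\lambda_j$ chosen so that all $\lambda_j + \min_{e'}((H_j)_{e'} + w_{e'})$ coincide. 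For $\subseteq$, it suffices to show $\inn_{\mathbf{w}} H$ is a union of circuits of $\inn_{\mathbf{w}}\M$ whenever $H \in \C(\M)$: for each $e_0 \in \inn_{\mathbf{w}} H$ I would apply valuated circuit elimination to $H$ and a fundamental circuit $H(B, e_0)$ for some $B \in \inn_{\mathbf{w}}\B(\M)$ with $e_0 \notin B$ (treating the case where $e_0$ lies in every initial basis by an analogous argument via the dual), iteratively shrinking the initial support until it becomes a circuit of $\inn_{\mathbf{w}}\M$ through $e_0$ contained in $\inn_{\mathbf{w}} H$. The minimal-circuit characterization follows at once: the circuits of $\inn_{\mathbf{w}}\M$ lie in $\inn_{\mathbf{w}}\C(\M)$ by the second step, while every strictly larger element of $\inn_{\mathbf{w}}\C(\M)$ is a union of such circuits and therefore not minimal. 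The main obstacle will be the iterative elimination step: arranging scalings of $H$ and $H(B, e_0)$ so that the valuated circuit elimination axiom produces a strict decrease in initial support at each iteration requires careful bookkeeping of which coordinates become infinite versus which remain minimal after elimination.
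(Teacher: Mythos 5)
Your treatment of the basis exchange axiom and of the fundamental circuits $\inn_{\mathbf{w}}H(B,e)$ for $B \in \inn_{\mathbf{w}}\B(\M)$ matches the paper exactly, so that part is fine. But the cycle argument has a genuine structural gap. The paper proves, as a separate step before touching cycles, that \emph{every} element of $\inn_{\mathbf{w}}\C(\M)$ is dependent in $\inn_{\mathbf{w}}\M$: assuming $\inn_{\mathbf{w}}H \subseteq B$ for some $B \in \inn_{\mathbf{w}}\B(\M)$, writing $H = \lambda \ttimes H(B',e)$ for $e \in \inn_{\mathbf{w}}H$, and applying the valuated exchange axiom for $p_{\mathbf{w}}$ to the pair $(B, B')$ yields a contradiction. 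You never establish this, yet your plan for the inclusion $\subseteq$ needs it: you aim to find, for each $e_0 \in \inn_{\mathbf{w}}H$, a circuit of $\inn_{\mathbf{w}}\M$ through $e_0$ inside $\inn_{\mathbf{w}}H$, and no such circuit exists unless you already know $\inn_{\mathbf{w}}H$ is dependent and $e_0$ is not a coloop. Your parenthetical "treating the case where $e_0$ lies in every initial basis by an analogous argument via the dual" is precisely this danger point, and it is not analogous at all: if $e_0$ were a coloop of $\inn_{\mathbf{w}}\M$ the goal would literally be unattainable, and ruling this out is tantamount to what you are trying to prove.

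The iterative elimination step also does not work as organized. You propose to eliminate between $H$ and $H(B, e_0)$ while preserving $e_0$ in the initial support, but the circuit elimination axiom eliminates at a coordinate where the two (scaled) circuits \emph{agree}, and the natural agreement coordinate between $H$ and $H(B, e_0)$ is $e_0$ itself (both have it in their initial supports). Eliminating there destroys $e_0$, and there is no guarantee of a usable agreement coordinate elsewhere. Contrast with the paper's argument: there one takes a minimal bad $G$, lets $C = \inn_{\mathbf{w}}H$ be any circuit of $\inn_{\mathbf{w}}\M$ contained in $\inn_{\mathbf{w}}G$ (which exists by the dependency step you omitted), scales so the minima of $G$ and $H$ agree, eliminates at a coordinate $e \in C$ (where $G$ and $H$ necessarily agree), and preserves a chosen $e' \in \inn_{\mathbf{w}}G$ that lies in \emph{no} circuit of $\inn_{\mathbf{w}}G$ — so $e' \notin C$, which forces $G_{e'} < H_{e'}$ as required by the elimination axiom. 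The key inversion is that the element you want to preserve must be \emph{outside} the circuit you eliminate against, not inside it. To repair your proof you should add the dependency argument and reorganize the induction around a minimal counterexample as the paper does, rather than around a fixed $e_0 \in \inn_{\mathbf{w}}H$.
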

\begin{proof}
For any $B \in \binom{E}{r}$ set 
\[
\textstyle p_{\mathbf w}(B) :=  p(B) - \sum_{e \in B} w_e.
\]
To prove that $\inn_{\mathbf w} \B(\M)$ satisfies the basis exchange
axiom, fix $A, B \in \binom{E}{r}$ and choose $a \in A
\setminus B$.  Since $p$ satisfies the valuated basis exchange axiom
\eqref{eqtn:tropicalplucker} there exists $b \in B\setminus A$ such
that
\[p(A) + p(B) \geq p(A \cup b - a) + p(B \cup a - b).\]
Subtracting $\sum_{e \in A}w_e + \sum_{e \in B}w_e$ on both sides we get
\begin{equation*} \label{eqtn:initialweight}
 p_{\mathbf{w}}(A) + p_{\mathbf{w}}(B) 
\geq p_{\mathbf{w}}(A \cup b -a) + p_{\mathbf{w}}(B \cup a -b).
\end{equation*}
This implies that $p_{\mathbf w}$ also satisfies the valuated basis
exchange axiom.  Moreover, if $A, B \in \inn_{\mathbf{w}} \B(\M)$ then both $A\cup b-a$ and $B \cup a-b$ are 
in $\inn_{\mathbf w} \B(\M)$ 
as well.

We now prove that the circuits of $\inn_{\mathbf w} \M$ have the
desired description.  The $\mathbf w$-initial term of any fundamental
circuit $H(B,e)$ of $\M$ (as in \eqref{eqtn:fundamentalcircuit}) is
the set of $e' \in E$ for which $p(B \cup e - e') - p(B) + w_{e'}$ is
minimal.  Adding $p(B) - \sum_{a \in B \cup e} w_a$, we get
\begin{equation}\label{eq:initial}
\inn_{\mathbf w} H(B,e) = \{e' \in E : p_{\mathbf w}(B \cup e - e') 
\text{ is minimal among all } e' \in E \}. \\
\end{equation}
Any circuit $C$ of the initial matroid $\inn_{\mathbf w}\M$ 
is the fundamental circuit of an element $e \in E$ over a basis 
$B \in \inn_{\mathbf w} \B(\M)$, and thus has the form
\begin{align*}
C &= \{e' \in E : B \cup e - e' \in \inn_{\mathbf w} \B(\M) \} \\
&= \{e' \in E : p_{\mathbf w}(B \cup e - e') = p_{\mathbf w}(B) \} \\
&= \inn_{\mathbf w} H(B,e).
\end{align*}
This shows that all circuits of $\inn_{\mathbf w} \M$ appear in the
collection $\inn_{\mathbf w}\C(\M)$.  To prove our claim, it then
suffices to show that each set in $\inn_{\mathbf w}\C(\M)$ is a
dependent set of $\inn_{\mathbf w} \M$.  Assume by contradiction that
$\inn_{\mathbf w} H \subseteq B$ for some $H \in \C(\M)$ and some
basis $B \in \inn_{\mathbf w} \B(\M)$.  Fix $e \in \inn_{\mathbf w}
H$, and a basis $B'$ of the underlying matroid $\underline{\M}$
such that $H = \lambda \ttimes H(B',e)$ for some $\lambda \in \mathbb
R$.  Since $e \in B \setminus B'$, the valuated basis exchange axiom for $p_{\mathbf{w}}$ implies that 
there exists $e' \in B' \setminus B$ such that
\begin{equation}\label{eq:exchange}
p_{\mathbf{w}}(B) + p_{\mathbf{w}}(B') 
\geq p_{\mathbf{w}}(B \cup e' -e) + p_{\mathbf{w}}(B' \cup e -e').
\end{equation}
As $\inn_{\mathbf{w}} H(B',e) \subseteq B$ and $e' \not \in B$, by \eqref{eq:initial}
we have $p_{\mathbf{w}}(B') < p_{\mathbf{w}}(B' \cup e -e')$.
Moreover, since $B \in \inn_{\mathbf w} \B(\M)$, we also have
$p_{\mathbf{w}}(B) \leq p_{\mathbf{w}}(B \cup e' -e)$.  But then,
adding these two inequalities gives a contradiction to
\eqref{eq:exchange}.

We now prove that the set of cycles of $\inn_{\mathbf w} \M$ 
is equal to $\inn_{\mathbf w}\V(\M)$. 
Any $H \in \V(\M)$ has the form $H = H^1 \tplus \dotsb \tplus H^k$ 
for circuits $H^1, \dotsc, H^k$ of $\M$.
Its initial form $\inn_{\mathbf w} H$ is the union of those initial 
forms $\inn_{\mathbf w} H^i$ for which
$\min_{e \in E}(H^i_{e} + w_{e}) = \min_{e \in E}(H_{e} + w_{e})$. 
It follows that the collection $\inn_{\mathbf w}\V(\M)$ is closed under unions. 
Moreover, our previous claim implies that $\inn_{\mathbf w}\V(\M)$ 
contains all circuits of $\inn_{\mathbf w}\M$. To complete the proof, 
it is thus enough to show that for any circuit $G$ of $\M$, 
the set $\inn_{\mathbf w} G$ is a cycle of $\inn_{\mathbf w}\M$. 

We proceed by contradiction. Assume $G$ is a circuit of $\M$ such that
$\inn_{\mathbf w} G$ is not a cycle, and take $G$ such that
$\inn_{\mathbf w} G$ is inclusion-minimal with this property.  Since
$\inn_{\mathbf w} G$ is dependent in $\inn_{\mathbf w}\M$, it contains
a circuit $C$ of $\inn_{\mathbf w}\M$, which must have the form $C =
\inn_{\mathbf w} H$ for some circuit $H$ of $\M$.  After tropically
rescaling, we may assume that $\min_{e \in E}(G_e +w_e) = \min_{e \in
  E}(H_e +w_e)$.  Let $e \in \inn_{\mathbf w} H \subseteq
\inn_{\mathbf w} G$.  We have $G_e + w_e = H_e + w_e$, and so $G_e =
H_e$.  Let $e' \in \inn_{\mathbf w} G$ be such that $e'$ is in no
circuit of $\inn_{\mathbf w} \M$ contained in $\inn_{\mathbf w} G$; in
particular, $e' \notin \inn_{\mathbf w} H$.  It follows that $G_{e'} + w_{e'}
< H_{e'} + w_{e'}$, and so $G_{e'} < H_{e'}$.  We can then apply the circuit
elimination axiom to get a circuit $F$ of $\M$ such that $F_e =
\infty$, $F_{e'} = G_{e'}$, and $F \geq G \tplus H$.  This implies that $e'
\in \inn_{\mathbf w} F$, $\inn_{\mathbf w} F \subseteq \inn_{\mathbf
  w} G$, and $e \notin \inn_{\mathbf w}F$.  We thus have that
$\inn_{\mathbf w} F$ is a proper subset of $\inn_{\mathbf w} G$, and
our minimality assumption implies that $\inn_{\mathbf w} F$ is a
cycle. This contradicts our choice of $e'$, as $e' \in \inn_{\mathbf
  w} F$.
\end{proof}

In order to use Lemma \ref{lem:initial} to describe the relationship between
the valuated matroids of an ideal and those of its initial ideals, we need the
notion of contraction of a valuated matroid.

Let $\mathcal M$ be a valuated matroid on the ground set $E$, 
and let $A$ be a subset of $E$ of rank $s$.  Fix a basis $B_{A}$ of
the restriction $\underline{\mathcal M}|{A}$.  The \emph{contraction} $\mathcal M/A$ 
with respect to $B_{A}$ is the valuated matroid $\mathcal M/A = (E \setminus A,
q)$ of rank $r-s$ with basis valuation function $q : \binom{E \setminus A}{r-s}
\rightarrow R$ given by
\[q(B) = p(B \sqcup B_{A}).\] If we replace $B_{A}$ by a different
basis $B'_{A}$ of $A$ then $q$ changes by global (tropical) multiplication by 
a nonzero constant; see, for instance, \cite{MurotaBook}*{Theorem 5.2.5}. 
The resulting valuated matroids are thus the same,
and we suppress the choice of $B_{A}$ from the notation for contractions. 
The set of vectors of the contraction $\mathcal M/A$ is 
\[\V(\M/A) = \{ H[E \setminus A] : H \in \V(\M) \},\]
where $H[E \setminus A] \in \R^{E \setminus A}$ is given by
$(H[E \setminus A])_e := H_e$ for any $e \in E\setminus A$; 
see, for example, \cite{Baker}*{Theorem 4.17 (2)}.

If $I$ is a homogeneous tropical ideal in $\mathbb B[x_0, \dotsc,
  x_n]$, we write $M_d(I) := \underline{\M_d(I)}$ to emphasize the
fact that no extra information is encoded in the valuated matroid
$\M_d(I)$ besides its underlying matroid.  Also, if $M$ is a matroid
on the ground set $E$, and $F$ is a finite set, the {\em coloop extension} $M
\oplus F$ is the matroid on the ground set $E \sqcup F$ obtained by
attaching to $M$ all the elements of $F$ as coloops, i.e., $\B(M
\oplus F) = \{B \sqcup F: B \in \B(M)\}$.

\begin{theorem}\label{t:initial} 
(Initial ideals of tropical ideals are tropical.)
Let $I$ be a homogeneous tropical ideal in $\Rbar[x_0,\dots,x_n]$, and
fix $\mathbf w \in \Rbar^{n+1}$ with $\mathbf w \neq (\infty, \dotsc, \infty)$.  The
initial ideal $\inn_{\mathbf w}(I)$ is a homogeneous tropical ideal in 
$\mathbb B[x_0,\dots,x_n]$. If $\sigma := \{ i : w_i = \infty\}$ 
and $\mon^\sigma_d$ is the set of monomials in $\mon_d$ divisible by a 
variable $x_i$ with $i \in \sigma$, 
the associated matroid $M_d(\inn_{\mathbf w}(I))$ is equal to 
$\inn_{\hat{\mathbf w}}(\M_d(I)/\mon^\sigma_d) \oplus \mon^\sigma_d$, 
where $\hat{\mathbf w} \in \Rbar^{\mon_d \setminus \mon^\sigma_d}$ is given by 
$\hat{\mathbf{w}}_{\mathbf{x}^{\mathbf{u}}} := \mathbf w \cdot \mathbf u$.
\end{theorem}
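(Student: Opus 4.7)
My strategy is to identify $(\inn_{\mathbf w} I)_d$ with the set of vectors of the matroid $N_d := \inn_{\hat{\mathbf w}}(\M_d(I)/\mon^\sigma_d) \oplus \mon^\sigma_d$, which simultaneously establishes both assertions of the theorem. Since the text already records the equality $(\inn_{\mathbf w} I)_d = \inn_{\mathbf w}(I_d)$, it suffices to analyze this graded piece one degree at a time.

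The first step is to isolate the coordinates in $D_\sigma := \mon_d \setminus \mon^\sigma_d$. Using the convention $\infty \cdot a = \infty$ for $a \neq 0$, every $\mathbf x^{\mathbf u} \in \mon^\sigma_d$ satisfies $\mathbf w \cdot \mathbf u = \infty$, so such a monomial can never appear in $\supp(\inn_{\mathbf w}(f))$. Therefore every element of $(\inn_{\mathbf w} I)_d$, viewed as a $\mathbb B$-polynomial on $\mon_d$, has support contained in $D_\sigma$.

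The second step is to relate the two notions of initial form. For any $f \in I_d$, set $g := f[D_\sigma] \in \Rbar^{D_\sigma}$. Unwinding the definitions shows $\supp(\inn_{\mathbf w}(f)) = \inn_{\hat{\mathbf w}}(g)$ in the sense of Lemma~\ref{lem:initial} (including the degenerate case $f(\mathbf w) = \infty$, which forces $\supp(f) \subseteq \mon^\sigma_d$, hence $g$ is the zero vector). The contraction identity $\V(\M_d(I)/\mon^\sigma_d) = \{f[D_\sigma] : f \in I_d\}$ recalled in the text shows that $f \mapsto f[D_\sigma]$ surjects $I_d = \V(\M_d(I))$ onto $\V(\M_d(I)/\mon^\sigma_d)$. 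Consequently, the supports appearing in $(\inn_{\mathbf w} I)_d$ are exactly the elements of $\inn_{\hat{\mathbf w}}\V(\M_d(I)/\mon^\sigma_d)$, which by Lemma~\ref{lem:initial} are precisely the cycles of the matroid $\inn_{\hat{\mathbf w}}(\M_d(I)/\mon^\sigma_d)$ on the ground set $D_\sigma$.

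Finally, adjoining the monomials in $\mon^\sigma_d$ as coloops preserves the set of cycles, so the cycles of $N_d$ (now on $\mon_d$) agree with the supports appearing in $(\inn_{\mathbf w} I)_d$. Since a $\mathbb B$-subsemimodule of $\mathbb B^{\mon_d}$ is the set of vectors of a valuated matroid exactly when its elements are the $\mathbb B$-polynomials whose supports range over the cycles of some matroid, this identifies $(\inn_{\mathbf w} I)_d$ with $\V(N_d)$ and completes the proof. The main obstacle is not conceptual but notational: one must carefully translate between $\mathbb B$-polynomials on $\mon_d$, subsets of $\mon_d$, cycles of matroids on $D_\sigma$, and their coloop extensions to $\mon_d$, while respecting the convention $\infty \cdot 0 = 0$ that forces the monomials in $\mon^\sigma_d$ to behave as coloops.
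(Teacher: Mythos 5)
Your proposal is correct and follows the same route as the paper's proof of Theorem~\ref{t:initial}: restrict to coordinates outside $\mon^\sigma_d$, identify that restriction with the contraction $\M_d(I)/\mon^\sigma_d$, invoke Lemma~\ref{lem:initial} for the initial matroid, and finish by noting that the coloop extension does not change the cycles. Your version simply spells out, a bit more explicitly than the paper, the translation between $\mathbb B$-polynomials, supports, cycles on $D_\sigma$, and cycles on $\mon_d$.
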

\begin{proof}
For any $f \in I_d$, denote by $f^{\sigma}$ the tropical polynomial
obtained by deleting all terms of $f$ involving a monomial in $\mon^\sigma_d$.
By definition, we have $\inn_{\mathbf{w}}(f) = 
\inn_{\mathbf{w}}(f^{\sigma})$.
Moreover, the set $I^{\sigma}_d= \{ f^{\sigma} : f \in I_d \}$ is the set of 
vectors of the valuated matroid $\M_d(I)/\mon^\sigma_d$. 
It follows from Lemma~\ref{lem:initial} that $(\inn_{\mathbf{w}}(I))_d = 
\{ \inn_{\mathbf{w}}(f^{\sigma}) : f \in I_d \}$ is the set 
of cycles of the matroid 
$\inn_{\hat{\mathbf{w}}}(\M_d(I)/\mon^\sigma_d)$,
which is also the set of cycles of the coloop extension
$\inn_{\hat{\mathbf{w}}}(\M_d(I)/\mon^\sigma_d) \oplus \mon^\sigma_d$.
\end{proof}

We now apply these results
to study Hilbert functions of homogeneous tropical ideals in
$R[x_0,\dots,x_n]$, where $R$ is again a general valuative semifield,
and to prove that they satisfy the ascending chain condition.

\begin{definition}({\em Hilbert functions.})
Let $I$ be a homogeneous tropical ideal in $\R[x_0,\dots,x_n]$. 
Its {\em Hilbert function} is 
the function $H_I : \mathbb{Z}_{\geq 0} \to \mathbb{Z}_{\geq 0}$ defined by
\[
H_I(d) := \text{rank}( \M_d(I)).
\]
\end{definition}

If $J \subseteq \Kx$ is a homogeneous ideal then the matroid
$\underline{\M_d(\trop(J))}$ 
encodes the dependencies in $(K[x_0,
  \dotsc, x_n]/J)_d$ among the monomials in $\mon_d$;
a subset of $\mon_d$ is dependent if and only if there is a
polynomial $f \in J_d$ whose support is contained in this subset. 
This implies that the bases of 
$\underline{\M_d(\trop(J))}$ correspond to monomial bases of the vector space $(\Kx/J)_d$.  
It follows that Hilbert functions 
are preserved under tropicalization: 
\[H_{\trop(J)}(d) = \dim_{\K} ((\Kx/J)_d)\]
for any $d \in \mathbb{Z}_{\geq 0}$. See also \cite{Giansiracusa2}*{Theorem 7.1.6}.

A key fact about standard Gr\"obner theory is that the Hilbert function
of an ideal is preserved when passing to an initial ideal.  
We next observe that this also holds in the tropical setting.  
Note that $\mathbf{w}$ is
not allowed to have infinite coordinates for this result.

\begin{corollary}\label{cor:samehilbert}
Suppose $I$ is a homogeneous tropical ideal
in $\Sx$ or in $\Bx$, and fix $\mathbf w \in \mathbb R^{n+1}$. 
For any $d \in \mathbb{Z}_{\geq 0}$ we have 
$H_I(d) = H_{\inn_{\mathbf w} (I)}(d)$.
\end{corollary}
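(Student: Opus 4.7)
The plan is to derive the corollary as an essentially immediate consequence of Theorem~\ref{t:initial} and Lemma~\ref{lem:initial}, which together say that the ``initial matroid'' operation preserves rank.

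First I would handle the case $I \subseteq \Sx$. Since $\mathbf w \in \mathbb R^{n+1}$ has no coordinate equal to $\infty$, the index set $\sigma = \{i : w_i = \infty\}$ appearing in Theorem~\ref{t:initial} is empty. Consequently $\mon^{\sigma}_d = \emptyset$, the contraction $\M_d(I)/\mon^{\sigma}_d$ is just $\M_d(I)$, and the coloop extension contributes nothing, so Theorem~\ref{t:initial} reduces to
\[
M_d(\inn_{\mathbf w}(I)) = \inn_{\hat{\mathbf w}}(\M_d(I)),
\]
where $\hat{\mathbf w}_{\mathbf x^{\mathbf u}} := \mathbf w \cdot \mathbf u$. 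By Lemma~\ref{lem:initial}, the bases of $\inn_{\hat{\mathbf w}}(\M_d(I))$ form a subcollection of the bases of $\M_d(I)$, so both matroids have the same rank. This yields $H_I(d) = H_{\inn_{\mathbf w}(I)}(d)$.

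For the Boolean case $I \subseteq \Bx$, the definition of initial ideal gives $\inn_{\mathbf w}(I) = \inn_{\mathbf w}(I\Sx)$, so it suffices to verify that $H_I(d) = H_{I\Sx}(d)$ and then invoke the previous case for $I\Sx$. For this I would note that $(I\Sx)_d$ is the $\Rbar$-semimodule generated by $I_d \subseteq \{0,\infty\}^{\mon_d}$, so the support of any element of $(I\Sx)_d$ is a union of supports of elements of $I_d$, i.e., a cycle of $M_d(I)$; conversely, every cycle of $M_d(I)$ is realized as the support of an element of $I_d \subseteq (I\Sx)_d$. Hence the underlying matroid of $\M_d(I\Sx)$ has exactly the same cycles as $M_d(I)$, the two matroids coincide, and in particular they have equal rank.

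No real obstacle arises: the matroidal heavy lifting was done in Lemma~\ref{lem:initial}, and this corollary is mostly a bookkeeping exercise lining up definitions with the conclusion of Theorem~\ref{t:initial}.
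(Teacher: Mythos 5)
Your proposal is correct and follows essentially the same route as the paper: for $I \subseteq \Sx$ you combine Theorem~\ref{t:initial} (with $\sigma = \emptyset$) and the rank-preservation observation from Lemma~\ref{lem:initial}, and for $I \subseteq \Bx$ you pass to $I\Sx$ and identify $\underline{\M_d(I\Sx)}$ with $M_d(I)$ — exactly the paper's reduction, with your cycle comparison merely spelling out the identification that the paper asserts in one line.
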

\begin{proof}
Suppose $I$ is a homogeneous tropical ideal in $\Sx$. By Theorem \ref{t:initial}, 
the matroid $M_d(\inn_{\mathbf w}(I))$ is equal to $\inn_{\hat{\mathbf 
w}}\M_d(I)$.
The description given in Lemma \ref{lem:initial}
shows that all bases of $\inn_{\hat{\mathbf w}}\M_d(I)$ 
are bases of the underlying matroid $\underline{\M_d(I)}$, and thus the
rank of $\M_d(I)$ is equal to the rank of $M_d(\inn_{\mathbf w}(I))$, as 
desired.

If $I$ is a homogeneous tropical ideal in $\Bx$, let 
$I' := I\Rbar[x_0,\dots,x_n]$ be the ideal generated by $I$ 
in $\Sx$ using the inclusion of $\mathbb B$ into
$\Rbar$. By definition, the ideal $I'$ is a homogeneous tropical ideal satisfying 
$\inn_{\mathbf w}(I) = \inn_{\mathbf w} (I')$.
Moreover, $I'$ has the same Hilbert function as $I$, as $\underline{\M_d(I')} = 
M_d(I)$. The result then follows from the previous case.
\end{proof}

The following result will be useful in our study of tropical ideals 
and their Hilbert functions.

\begin{lemma}\label{l:monomial}
If $I$ is a homogeneous tropical ideal in $\Sx$ or $\Bx$ then there exists
$\mathbf w \in \mathbb R^{n+1}$ such that $\inn_{\mathbf w}(I)$ is
generated by monomials. In fact, the set of $\mathbf w$ for which 
$\inn_{\mathbf w}(I)$ is not generated by monomials is contained 
in a countable union of hyperplanes in $\mathbb R^{n+1}$.
\end{lemma}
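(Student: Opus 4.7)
My plan is to translate the condition ``$\inn_{\mathbf{w}}(I)$ is generated by monomials'' into a condition on the matroids $\inn_{\hat{\mathbf{w}}}(\M_d(I))$ via Theorem \ref{t:initial}, and then argue that the ``bad'' $\mathbf{w}$ are trapped in a countable union of hyperplanes.

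The first step is a matroidal characterization of monomial generation: a homogeneous tropical ideal $J \subseteq \Bx$ is generated by monomials if and only if, for every $d \geq 0$, every circuit of $M_d(J)$ is a singleton (equivalently, $M_d(J)$ is a direct sum of loops and coloops). If $J$ is generated by monomials and $L_d$ denotes its degree-$d$ monomials, then $J_d$ is the set of all Boolean polynomials supported on $L_d$, so the loops of $M_d(J)$ are exactly $L_d$ and the coloops are $\mon_d \setminus L_d$. Conversely, if every circuit of $M_d(J)$ is a singleton, then $J_d$ consists of the polynomials supported on the loop set $L_d$; the ideal condition forces $x_i \ttimes L_d \subseteq L_{d+1}$, and $J = \langle \bigcup_d L_d \rangle$ is monomial.

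Next, restrict to $\mathbf{w} \in \mathbb{R}^{n+1}$, so that $\sigma = \emptyset$ in Theorem \ref{t:initial}. That theorem gives $M_d(\inn_{\mathbf{w}}(I)) = \inn_{\hat{\mathbf{w}}}(\M_d(I))$; for $I \subseteq \Bx$ the same identity holds after replacing $I$ by $I\Rbar[x_0,\dots,x_n]$, just as in the proof of Corollary \ref{cor:samehilbert}. By Lemma \ref{lem:initial}, every circuit of the initial matroid $\inn_{\hat{\mathbf{w}}}(\M_d(I))$ is of the form $\inn_{\hat{\mathbf{w}}} H$ for some $H \in \C(\M_d(I))$. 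It therefore suffices to produce $\mathbf{w}$ for which $\inn_{\hat{\mathbf{w}}} H$ is a singleton for every valuated circuit $H$ of every $\M_d(I)$: this forces every circuit of every initial matroid to be a singleton, and the characterization above then gives that $\inn_{\mathbf{w}}(I)$ is monomial.

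Finally, $\inn_{\hat{\mathbf{w}}} H$ fails to be a singleton precisely when there exist distinct $\mathbf{x}^{\mathbf{u}}, \mathbf{x}^{\mathbf{v}} \in \supp(H)$ with $H_{\mathbf{x}^{\mathbf{u}}} + \mathbf{w} \cdot \mathbf{u} = H_{\mathbf{x}^{\mathbf{v}}} + \mathbf{w} \cdot \mathbf{v}$, which cuts out a genuine hyperplane in $\mathbb{R}^{n+1}$ since $\mathbf{u} \neq \mathbf{v}$. Two circuits of $\M_d(I)$ sharing a support differ by a tropical scalar (as recalled from \cite{MurotaTamura} in Section \ref{s:2}), so the resulting hyperplane depends only on the support; because $\underline{\M_d(I)}$ is a matroid on the finite set $\mon_d$, each degree contributes only finitely many such hyperplanes, and summing over $d \geq 0$ yields a countable union of hyperplanes in $\mathbb{R}^{n+1}$. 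Any $\mathbf{w}$ in its (dense) complement does the job. The only real content beyond bookkeeping is the matroidal characterization in the first paragraph; everything else is an assembly of Theorem \ref{t:initial}, Lemma \ref{lem:initial}, and a hyperplane-avoidance argument, and the main potential pitfall is simply to remember to reduce valuated circuits to their supports before counting hyperplanes.
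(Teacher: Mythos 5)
Your argument is correct and follows essentially the same route as the paper's proof: both reduce, via Lemma \ref{lem:initial} and Theorem \ref{t:initial}, to finding $\mathbf{w}$ for which $\inn_{\mathbf{w}}(f)$ is a monomial for every circuit $f$ of every $\M_d(I)$, and then observe that the bad set is a countable union of hyperplanes since there are finitely many circuits per degree (up to scaling). The one structural difference is that you insert an intermediate matroidal characterization (``monomial iff every circuit of $M_d$ is a singleton''), whereas the paper shortcuts this by noting directly that $\inn_{\mathbf{w}}(I)$ is generated by the polynomials $\inn_{\mathbf{w}}(f)$ for $f$ a circuit; your extra lemma is correct and arguably clarifies the reduction, but it doesn't change the underlying argument.
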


\begin{proof}
Lemma \ref{lem:initial} implies that $\inn_{\mathbf w}(I)$ 
is generated by the polynomials $\inn_{\mathbf w}( f)$
with $f$ a circuit of one of the matroids $\M_d(I)$. 
It thus suffices to show that there is a $\mathbf w \in \mathbb R^{n+1}$
such that for any $d \geq 0$ and any circuit $f$ of $\M_d(I)$, 
the initial term $\inn_{\mathbf w} (f)$ is a monomial.
For any such $f$, the set of $\mathbf w \in \mathbb R^{n+1}$ 
for which $\inn_{\mathbf w}(f)$ is not a monomial is a finite union
of codimension-one polyhedra in $\mathbb R^{n+1}$. 
Moreover, for any $d \geq 0$ the set of circuits of $\M_d(I)$ 
is finite (up to scaling).
It follows that the set of $\mathbf w \in \mathbb R^{n+1}$ 
for which there is some circuit $f$ such that $\inn_{\mathbf w}(f)$ 
is not a monomial is a countable union of codimension-one polyhedra in 
$\mathbb R^{n+1}$.
\end{proof}

In Section \ref{s:5} we will see that, in fact, the set of $\mathbf w \in \mathbb R^{n+1}$
for which $\inn_{\mathbf w}(I)$ is not generated by monomials is a finite union
of polyhedra of codimension at least 1.

The following result shows that Hilbert functions of tropical ideals
are eventually polynomial.

\begin{proposition}\label{prop:eventuallypoly}
If $I$ is a homogeneous tropical ideal in $R[x_0,\dots,x_n]$
then its Hilbert function $H_I$ is eventually polynomial.
\end{proposition}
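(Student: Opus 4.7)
My plan is to reduce the result to the classical Hilbert theorem for monomial ideals, in two stages: first pass from the general valuative semifield $R$ to the Boolean semifield $\mathbb{B}$, then use the Gr\"obner-theoretic tools of this section to reduce further to a monomial ideal in $\Bx$.

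For the first stage, I will consider the ``Boolean shadow'' $I^{\mathbb{B}} \subseteq \Bx$ defined by declaring $(I^{\mathbb{B}})_d$ to be the collection of cycles of the underlying matroid $\underline{\M_d(I)}$, regarded as $\mathbb{B}$-valued vectors in $\mathbb{B}^{\mon_d}$. The key check is that $I^{\mathbb{B}}$ is a homogeneous tropical ideal with $\M_d(I^{\mathbb{B}}) = \underline{\M_d(I)}$: each $(I^{\mathbb{B}})_d$ is by definition the set of vectors of $\underline{\M_d(I)}$ viewed as a valuated matroid over $\mathbb{B}$, and compatibility of the sequence $(\underline{\M_d(I)})_{d \geq 0}$ is inherited from compatibility of $(\M_d(I))_{d \geq 0}$ by taking supports: if $C = \supp(H)$ for some vector $H \in \V(\M_d(I))$, then $x_i \ttimes H \in \V(\M_{d+1}(I))$ has support $x_i \cdot C$, and so $x_i \cdot C$ is a cycle of $\underline{\M_{d+1}(I)}$. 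Since the rank of a valuated matroid equals the rank of its underlying matroid, $H_I(d) = H_{I^{\mathbb{B}}}(d)$ for every $d$.

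For the second stage, I will apply Lemma \ref{l:monomial} to $I^{\mathbb{B}}$ to obtain a weight vector $\mathbf{w} \in \mathbb{R}^{n+1}$ such that $\inn_{\mathbf{w}}(I^{\mathbb{B}})$ is generated by monomials, and invoke Corollary \ref{cor:samehilbert} to conclude that the Hilbert function is preserved under this specialization. For a monomial ideal $J \subseteq \Bx$, a direct inspection shows that every element of $J \cap \mon_d$ is a loop of $\M_d(J)$ and every element of $\mon_d \setminus J$ is a coloop, so $H_J(d) = |\mon_d| - |J \cap \mon_d|$. This agrees with the classical Hilbert function of the monomial ideal generated by the same monomials in $K[x_0,\dotsc,x_n]$ for any field $K$, which is well known to be eventually polynomial. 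The only conceptually new step is the Boolean reduction; once $I^{\mathbb{B}}$ has been verified to be a tropical ideal, the remaining arguments fall out from the tools already developed together with the classical monomial case, and I expect no substantial further obstacle.
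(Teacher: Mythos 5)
Your proposal is correct and follows essentially the same route as the paper: the paper packages your ``Boolean shadow'' $I^{\mathbb B}$ as the image of $I$ under the surjective semiring homomorphism $\varphi : R \to \mathbb B$ sending $a \mapsto 0$ for $a \neq \infty$ and $\infty \mapsto \infty$, which gives the ideal property for free, but this produces exactly the same object (with $M_d(\varphi(I)) = \underline{\M_d(I)}$), and the remaining reduction via Lemma~\ref{l:monomial}, Corollary~\ref{cor:samehilbert}, and the classical Hilbert function of monomial ideals is identical.
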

\begin{proof}
Let $\varphi : R \to \mathbb B$ be the function defined by $\varphi(a)
= 0$ if $a \neq \infty$, and $\varphi(\infty) = \infty$.  Note that $\varphi$ is
a surjective semiring homomorphism, as the assumption on $R$ that
$a \tplus b = \min(a,b)$ implies that $a \tplus b \neq \infty$ if $a,b
\neq \infty$.  This induces a map of polynomial semirings $\varphi :
R[x_0, \dotsc, x_n] \to \mathbb B[x_0, \dotsc, x_n]$.  The image
$\varphi(I)$ of $I$ is a homogeneous tropical ideal in $\mathbb B
[x_0, \dotsc, x_n]$; in fact, we have $M_d(\varphi(I)) =
\underline{\M_d(I)}$. The homogeneous tropical ideals $I$ and $\varphi(I)$
therefore have the same Hilbert function, and thus it is enough to
prove that Hilbert functions of homogeneous tropical ideals in $\mathbb B[x_0,
  \dotsc, x_n]$ are eventually polynomial.

In this case, we can use Lemma \ref{l:monomial} and Corollary
\ref{cor:samehilbert} to reduce to the case where $I$ is a tropical
ideal in $\Bx$ generated by monomials.  In this situation, the Hilbert
function $H_I(d)$ equals the number of monomials of degree $d$ not in
$I$, which in turn equals the Hilbert function of the ideal $J
\subseteq \K[x_0,\dots,x_n]$ generated by the monomials in $I$, where
$\K$ is an arbitrary field.  The result then follows from the standard
fact that the Hilbert function of a homogeneous ideal in a polynomial
ring with coefficients in a field is eventually polynomial.
\end{proof}

\begin{definition} \label{d:HilbpolyDimension}({\em Hilbert polynomials.})
Let $I$ be a homogeneous tropical ideal.
The {\em Hilbert polynomial} of $I$ is the polynomial $P_I$ 
that agrees with the Hilbert function $H_I$ for $d \gg 0$. 
The {\em dimension} of $I$ is the degree of $P_I$.
\end{definition}

The following example shows that tropical ideals are typically not
finitely generated.  However, we prove in
Theorem~\ref{p:ascendingchain} that they do have some Noetherian
properties, in the sense that they satisfy the ascending chain
condition.  

\begin{example}\label{e:notFinitelyGenerated}
({\em Tropical ideals need not be finitely generated.})  Let $J := \langle
x-y \rangle \subseteq K[x,y]$, and let $I := \trop(J) \subseteq
R[x,y]$. We claim that the homogeneous tropical ideal $I$ is not finitely generated.
Note that $x^d-y^d \in J$ for all $d \geq 1$, so $x^d
\tplus y^d \in I$ for all $d \geq 1$.  Suppose that $f_1,\dots,f_r$
is a finite homogeneous generating set for $I$.  Then for all $d
\geq 1$ we can write
\begin{equation} \label{eq:xdplusyd} 
x^d \tplus y^d = \bigoplus h_{id} \ttimes f_i.
\end{equation}
Since there is no cancellation in $R[x,y]$, if $m$ is a monomial
occurring in $h_{id}$ for some $d$, and $m'$ is a monomial occurring in
$f_i$, then $mm'$ occurs in $h_{id}\ttimes f_i$, and thus in $x^d
\tplus y^d$.  This is only possible if each $f_i$ is either $x^d
\tplus y^d$ or a power of $x$ or $y$. 
Thus for any $d \geq \max(\deg(f_i))$, each $f_i$ occurring in
\eqref{eq:xdplusyd} must be a power of $x$ or $y$, and at least one of each must occur.  
This means that
$I_d = R[x,y]_d$ for $d \gg 0$, and so the Hilbert function of $I$
equals zero for $d \gg 0$, which contradicts the fact that the Hilbert
functions of $I$ and $J$ agree. 
\end{example}

\begin{theorem}\label{p:ascendingchain}
({Ascending chain condition.})
If $I, J \subseteq \R[x_0,\dotsc,x_n]$ are homogeneous tropical ideals with $I
\subseteq J$ and identical Hilbert functions then $I=J$.  
Thus there is no infinite ascending chain 
$I_1 \subsetneq I_2 \subsetneq I_3 \subsetneq \dotsb$ of tropical ideals
in $\R[x_0,\dotsc,x_n]$.
\end{theorem}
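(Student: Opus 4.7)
I prove the first assertion; the ACC follows from it by combining the pointwise monotonicity $H_{I_k} \geq H_{I_{k+1}}$ along any chain with Proposition~\ref{prop:eventuallypoly}, since non-increasing sequences of integer-valued functions that are eventually polynomial of degree $\leq n$ and pointwise bounded above (here by $H_{I_1}$) must stabilize, contradicting the contrapositive of the first assertion at each step.

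For the first assertion, fix a degree $d$ and aim to prove $\M_d(I) = \M_d(J)$. Using Lemma~\ref{l:monomial}, I pick $\mathbf w \in \mathbb R^{n+1}$ so that $\inn_\mathbf w(I)$ is a monomial ideal in $\mathbb B[x_0,\dots,x_n]$; by Corollary~\ref{cor:samehilbert}, $\inn_\mathbf w(I) \subseteq \inn_\mathbf w(J)$ are tropical ideals in $\mathbb B[x_0,\dots,x_n]$ with identical Hilbert functions. Setting $S := \inn_\mathbf w(I) \cap \mon_d$, the matroid $\underline{\M_d(\inn_\mathbf w(I))}$ has loop set $S$, and $B^* := \mon_d \setminus S$ forms a free submatroid. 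Every element of $S$ is a singleton cycle of $\M_d(\inn_\mathbf w(I))$ and therefore of $\M_d(\inn_\mathbf w(J))$, so $S$ is contained in the loops of $\underline{\M_d(\inn_\mathbf w(J))}$; the rank identity $\rank \M_d(\inn_\mathbf w(J)) = |\mon_d| - |S|$ forced by the Hilbert equality then pins down $S$ as the loops and $B^*$ as a free submatroid, so $\M_d(\inn_\mathbf w(J)) = \M_d(\inn_\mathbf w(I))$. By Lemma~\ref{lem:initial}, $B^*$ is the unique $\mathbf w$-optimal basis of both $\underline{\M_d(I)}$ and $\underline{\M_d(J)}$.

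The crux is comparing fundamental circuits at $B^*$. For each $e \in S$, let $H_I \in I_d \subseteq J_d$ and $H_J \in J_d$ denote the fundamental circuits of $\M_d(I)$ and $\M_d(J)$ at $(B^*, e)$; both have support in $B^* \cup \{e\}$ and $\mathbf w$-leading term $e$, so after rescaling their coordinates at $e$ agree. Applying the vector elimination axiom to $H_I, H_J \in \V(\M_d(J))$ at position $e$ produces $h \in J_d$ with $h_e = \infty$, $h \geq H_I \tplus H_J$, and $(h)_{e'} = (H_I)_{e'} \tplus (H_J)_{e'}$ whenever $(H_I)_{e'} \neq (H_J)_{e'}$. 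Because $\supp(h) \subseteq B^*$ and $B^*$ is independent in $\underline{\M_d(J)}$, we must have $h = \infty$, and the elimination axiom then forces $H_I$ and $H_J$ to agree coordinatewise. The most delicate step is converting this into the identity $\M_d(I) = \M_d(J)$: equality of the supports of all fundamental circuits at $B^*$ determines which single swaps from $B^*$ yield bases, hence, by connectivity of the basis-exchange graph, determines the entire underlying matroid; equality of the coordinate values pins down the Pl\"ucker ratios between $B^*$ and each of its neighboring bases, and iterating the same vector-elimination comparison along basis exchanges propagates Pl\"ucker equality to the whole Pl\"ucker function. This gives $\M_d(I) = \M_d(J)$, hence $I_d = J_d$ for every $d$, and $I = J$.
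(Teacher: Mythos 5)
Your proof of the first assertion is essentially correct but takes a genuinely different route from the paper's. The paper applies the surjection $\varphi\colon R \to \mathbb B$ to reduce to underlying (Boolean) matroids, then uses matroid quotient theory (Oxley, Prop.~7.3.6 and Cor.~7.3.4) together with the fact that valuated circuits are determined up to scaling by their supports. You instead isolate the common $\mathbf w$-optimal basis $B^*$ via Lemma~\ref{l:monomial} and compare fundamental circuits using vector elimination. Two caveats. First, the claim that equality of the supports of the fundamental circuits at $B^*$ determines the entire underlying matroid is, read literally, false: $U_{2,4}$ and the rank-$2$ matroid on $\{1,2,3,4\}$ in which $3$ and $4$ are parallel have identical fundamental circuits at the basis $\{1,2\}$, yet are distinct matroids. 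What actually carries your argument is the iteration you gesture at afterwards: the elimination computation only requires the ambient basis to be independent in $\underline{\M_d(J)}$, so once you have certified a neighbouring basis you can rerun it there, and connectivity of the basis-exchange graph then covers all bases. You should make that the stated logic. Second, your argument invokes initial ideals and Lemma~\ref{l:monomial}, which the paper defines and proves only over $\Rbar$ and $\mathbb B$; for a general valuative semifield $R$ you should first push forward along $\varphi$, as the paper does.

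The derivation of the ascending chain condition has a genuine gap. It is not true that a pointwise non-increasing sequence of non-negative integer-valued, eventually-polynomial functions bounded above must stabilize: take $H_k(d) := 1$ for $d \geq k$ and $H_k(d) := 0$ for $d < k$. Each $H_k$ is eventually the constant polynomial $1$, all are dominated by $H_0 \equiv 1$, and the sequence is strictly decreasing forever. So the abstract claim you rely on is false. The statement you actually need is that a pointwise non-increasing chain of \emph{Hilbert functions of ideals} must stabilize, and this requires a further input. Two ways to supply it: invoke Macaulay's theorem to send each Hilbert function to its lex ideal, producing an ascending chain in the Noetherian ring $K[x_0,\dots,x_n]$ which must stabilize; or, as the paper does, choose a single $\mathbf w$ that simultaneously makes every $\inn_{\mathbf w}(\varphi(I_j))$ a monomial ideal (possible because the exceptional loci from Lemma~\ref{l:monomial}, taken over all $j$, still form only a countable union of hyperplanes), so the resulting chain of monomial ideals stabilizes by Noetherianity and the Hilbert functions with it. As written, your ACC argument does not go through.
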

\begin{proof}
Let $\varphi : R[x_0,\dotsc,x_n] \to \mathbb B[x_0, \dotsc, x_n]$ 
be the semiring homomorphism 
described in the proof of Proposition \ref{prop:eventuallypoly}.
If $I \subseteq J$ are homogeneous tropical ideals in $\R[x_0,\dotsc,x_n]$
then $\varphi(I) \subseteq \varphi(J)$ are homogeneous tropical ideals
in $\mathbb B[x_0,\dotsc, x_n]$. Moreover, as $\varphi$ does not alter the
underlying matroids, if $I$ and $J$ have the same Hilbert function
then so do $\varphi(I)$ and $\varphi(J)$. In this case, for any $d \geq 0$
the matroids $M_d(\varphi(I))$ and $M_d(\varphi(J))$ have the same rank,
and any circuit of $M_d(\varphi(I))$ is a union of circuits of $M_d(\varphi(J))$,
so $M_d(\varphi(J))$ is a quotient of $M_d(\varphi(I))$; 
see \cite{Oxley}*{Proposition 7.3.6}. This implies that 
$M_d(\varphi(I)) = M_d(\varphi(J))$ \cite{Oxley}*{Corollary 7.3.4}. 
Given that $I_d \subseteq J_d$ and that the circuits of a
valuated matroid are determined (up to scaling) by their support,
we conclude that $\M_d(I) = \M_d(J)$. It follows that $I=J$.

Now, suppose that $I_1 \subseteq I_2 \subseteq I_3 \subseteq \dotsb$
is an infinite chain of homogeneous tropical ideals in
$R[x_0,\dotsc,x_n]$.  This gives rise to the chain $\varphi(I_1)
\subseteq \varphi(I_2) \subseteq \varphi(I_3) \subseteq \dotsb$ of
homogeneous tropical ideals in $\mathbb B[x_0, \dotsc, x_n]$.  By
Lemma \ref{l:monomial}, we can choose $\mathbf{w} \in \mathbb
R^{n+1}$ such that $\inn_{\mathbf w} (\varphi(I_j))$ is generated by
monomials for each $j$.  The chain $\inn_{\mathbf w}( \varphi(I_1))
\subseteq \inn_{\mathbf w}( \varphi(I_2 )) \subseteq \inn_{\mathbf w}(
\varphi(I_3 ))\subseteq \dots$ is then an infinite chain of monomial
ideals, which can then be regarded as ideals in $K[x_0,\dots,x_n]$ where
$K$ is any field. As $K[x_0,\dots,x_n]$ is Noetherian, this chain
must stabilize.  Since $\inn_{\mathbf w} (\varphi(I_j))$ and $I_j$ have
the same Hilbert functions, it follows that for large enough $j$ the
Hilbert functions of the $I_j$ are all equal. Our previous claim then
shows that for large enough $j$, the ideals $I_j$ are all the same, and so
the chain stabilizes.
\end{proof}

\begin{example}({\em Tropical ideals are not determined in low degrees.})
\label{e:infinitefamily}
We present an infinite family of distinct homogeneous tropical ideals
$(I_m')_{m > 0}$ in $\Rbar[x,y,z,w]$, all having the same Hilbert
function, such that for any $d \geq 0$, if $k, l \geq d$ then the
tropical ideals $I'_k$ and $I'_l$ agree on all their homogeneous parts
of degree at most $d$, i.e., $(I'_k)_i = (I'_l)_i$ for all $i \leq d$.
It follows that there is no bound $D$ depending only on the Hilbert
function of a tropical ideal $J$ for which the homogeneous parts
$(J_i)_{i \leq D}$ of degree at most $D$ determine the whole tropical
ideal $J$.  This contrasts with the case of ideals in a standard
polynomial ring, where the Gotzmann number \cite{BrunsHerzog}*{\S4.3} of
a Hilbert polynomial is a bound on the degrees of the generators for
any saturated ideal with that Hilbert polynomial.

For $\lambda \in \mathbb C$, consider the ideal 
$J_\lambda := \langle x-z-w,\, y-z-\lambda w \rangle$ in $\mathbb C [x,y,z,w]$,
and let $I_\lambda := \trop(J_\lambda)$. 
The Hilbert function of $I_\lambda$ is the same as that of $J_\lambda$, 
so $H_{I_\lambda}(d) = d+1$. 
We claim that for any degree $d \geq 0$,
the degree-$d$ parts of the homogeneous tropical ideals $I_\lambda$ are the same for all
but finitely many values of $\lambda$. 
Indeed, as we are working with the trivial valuation $\val: \mathbb C \to \Rbar$, 
the degree-$d$ part of $I_\lambda$ is determined by its rank-$(d+1)$
underlying matroid $\underline{\M_d(I_\lambda)}$, 
which encodes the linear dependencies
in the vector space $(\mathbb C [x,y,z,w] / J_\lambda)_d$ 
among the monomials of degree $d$.
The polynomials 
$x-z-w$ and $y-z-\lambda w$ form a Gr\"obner basis for $J_\lambda$
with respect to the lexicographic monomial order with $x > y > z > w$,
and the initial ideal $\inn(J_\lambda)$ is $\langle x,y \rangle$.
The corresponding standard monomials are the monomials that use only the
variables $z, w$.  These monomials form a basis for 
$\mathbb C [x,y,z,w] / J_\lambda$.
Consider the matrix $A$ whose columns are indexed by all monomials of degree $d$
and whose rows are indexed by the $d+1$ standard monomials of degree $d$,
where the entries of a column indexed by a monomial $\mathbf{x}^{\mathbf u}$ correspond 
to the unique way of expressing $\mathbf{x}^{\mathbf u}$ in terms of standard monomials
in $(\mathbb C [x,y,z,w] / J_\lambda)_d$. Note that the entries of $A$ are 
polynomials in $\lambda$. The bases of the matroid $\underline{\M_d(I_\lambda)}$ thus
correspond to the $(d+1)$-subsets of the columns of $A$ for which the 
associated maximal minor is nonzero. The collection of maximal minors 
of $A$ is a finite set of polynomials in $\lambda$, and so for any $\lambda$
that is not a root of any of these polynomials 
(except for the minors that are identically zero)
the matroid $\underline{\M_d(I_\lambda)}$ is the same, proving our claim.
We will call this matroid the generic matroid $M_d$.

We now show that that for any positive integer 
$n$, the matroid $\underline{\M_n(I_n)}$ is not the generic matroid $M_n$.
Consider the $n+1$ monomials 
$x^n$, $y z^{n-1}$, $z^{n-2} w^2$, $z^{n-3} w^3, \dotsc$,  $w^n$ in $\mon_d$.
These can be expressed in terms of standard monomials as 
$x^n = (z+w)^n$, $yz^{n-1} = z^n + \lambda z^{n-1}w$,
$z^{n-2} w^2$, $z^{n-3} w^3, \dotsc$, $w^n$. But then a simple linear
algebra computation shows that these elements are dependent only when
$\lambda = n$, as desired.

We define our family of tropical ideals $(I'_m)_{m > 0}$ inductively as follows.
Let $N_0 = 1$, and suppose we have defined the tropical ideals $I'_k$ for all $k 
< m$. Define $I'_m := I_{N_m}$, where $N_m>N_{m-1}$ is a large enough
integer such that for any degree $d \leq N_{m-1}$, 
the degree-$d$ part of 
$I_{N_m}$ is the generic matroid $M_d$. 
Since the matroid $\underline{\M_{N_m}(I_{N_m})}$ is not $M_{N_m}$,
all the tropical ideals $I_{N_m}$ defined in this way must be
distinct.  Also, by induction, we must have $N_{m-1} \geq m$.  We
conclude that the family $(I'_m)_{m > 0}$ satisfies the required
conditions: for any $d \geq 0$, if $k, l \geq d$ then the homogeneous
parts of $I'_k = I_{N_k}$ and $I'_l = I_{N_l}$ of degree at most $d$
are the generic matroids, as $N_{k-1}$ and $N_{l-1}$ are both
greater than $d$.
\end{example}

\section{Subschemes of tropical toric varieties}

\label{s:4}
We now extend the definition of tropical ideals to cover subschemes of
tropical toric varieties other than tropical projective space.

Tropical toric varieties are defined analogously to classical toric
varieties.  Given a rational polyhedral fan $\Sigma$, we associate an
affine tropical toric variety to each cone, and glue these together.
Explicitly, fix a lattice $N \cong \mathbb Z^n$ with dual lattice 
$M := \Hom(N,\mathbb Z) \cong \mathbb Z^n$.  Given a rational
polyhedral fan $\Sigma$ in $N_{\mathbb R} := N \otimes \mathbb R \cong \mathbb R^n$, 
to each $\sigma \in \Sigma$ we associate the vector space $N(\sigma) := N_{\mathbb
R}/\spann(\sigma)$.  The tropical toric variety associated to $\Sigma $ is
\[ \trop(X_{\Sigma}) := \textstyle \coprod_{\sigma \in \Sigma} N(\sigma).\]
To each $\sigma \in \Sigma$ we also associate the space
 $U_{\sigma}^{\trop} := \Hom(\sigma^{\vee} \cap M, \Rbar)$ of semigroup
homomorphisms to the semigroup $(\Rbar,\ttimes)$.
We have $U_{\sigma}^{\trop} = \coprod_{\tau \preceq \sigma} N(\tau)$,
where the union is over all faces $\tau$ of $\sigma$. 
The topology on $\Rbar$ has as basic open sets all open
intervals in $\mathbb R$, plus intervals of the form $(a,\infty] = \{
  b \in \mathbb R: b >a \} \cup \{\infty \}$. This induces the product
  topology on $\Rbar^{\sigma^{\vee} \cap M}$, and thus on
  $U_{\sigma}^{\trop}$.
See \cite{Kajiwara}, \cite{Payne}, \cite{Rabinoff} or
\cite{TropicalBook}*{Chapter 6} for more details on this construction.
Important special cases are the tropical versions of affine space,
projective space, and the torus; see Example~\ref{e:AnPn}.

One can also tropicalize the notion of Cox ring of a toric
variety.  In the case that the fan $\Sigma$ does not span all of
$N_{\mathbb R}$, for ease of notation we tropicalize the version 
given in \cite{CLS}*{(5.1.10)}.  The more invariant choice given in
\cite{CLS}*{Theorem 5.1.17} can also be tropicalized.

In what follows we will only consider rational polyhedral fans,
so will refer to them simply as fans. 

\begin{definition}({\em Cox semirings of tropical toric varieties.})
Let $\trop(X_{\Sigma})$ be a tropical toric variety defined by a fan
$\Sigma \subseteq N_{\mathbb R} \cong \mathbb R^n$.  Let $s$ be the
number of rays of $\Sigma$, and let $t = n - \dim(\spann(\Sigma))$.
Write $m = s+t$.  The {\em Cox semiring} of $\trop(X_{\Sigma})$ is the
semiring 
\[\Cox(\trop(X_{\Sigma})) := \Rbar[x_1,\dots,x_m].\]
Write $N$ as the direct sum $N'\oplus N''$, where the rays of $\Sigma$ 
span $N'_{\mathbb R}$.
Fix an $m \times n$ matrix  $Q$ whose $i$th row is the first
lattice point $\mathbf{v}_i$ on the $i$th ray of $\Sigma$ for  $1 \leq
i \leq s$, and whose last $t$ rows form a basis for $N''$.
We grade $\Cox(\trop(X_{\Sigma}))$ by the ``combinatorial Chow group''
$A_{n-1}(\Sigma) := \coker(M \cong \mathbb Z^n
\stackrel{\varphi}{\rightarrow} \mathbb Z^{m}),$ where the map $\varphi$
is given by $\varphi(\mathbf{u}) = Q \mathbf{u}$.  This gives the
last $t$ variables of $\Cox(\trop(X_{\Sigma}))$ degree $\mathbf{0} \in
A_{n-1}(\Sigma)$.

We will identify the rays of $\Sigma$ with the elements of $\{1,\dotsc,s\}$, and
thus general cones of $\Sigma$ with subsets of $\{1,\dotsc,s\}$.
For a cone $\sigma \in \Sigma$ write
\[\textstyle \mathbf{x}^{\hat{\sigma}} = \prod_{i \not \in \sigma} x_i \, \in \Rbar[x_1, \dotsc, x_m].\]  
The localization of the semiring $\Rbar[x_1,\dots,x_m]$ at a
monomial is defined analogously to the ring case.
The degree-zero part of the localization of the Cox semiring at
$\mathbf{x}^{\hat{\sigma}}$ is isomorphic to the semigroup semiring
$$(\Cox(\trop(X_{\Sigma}))_{\mathbf x^{\hat{\sigma}}})_{\mathbf{0}} 
\cong \Rbar[\sigma^{\vee} \cap M].$$  
Note that the last $t$ variables are always inverted in these localizations.
\end{definition}

Classical toric varieties can alternatively be described by the quotient construction
$X_{\Sigma} = (\mathbb A^m  \setminus V(B_\Sigma))/H_\Sigma$, where
$B_\Sigma$ denotes the
irrelevant ideal $\langle \mathbf x^{\hat{\sigma}} : \sigma \in \Sigma \rangle \subseteq K[x_1,\dots,x_m]$, 
and $H_\Sigma = \Hom(A_{n-1}(X_{\Sigma}),K^*)$.  
This construction also tropicalizes \cite{TropicalBook}*{Proposition 6.2.6}:
\begin{equation} \label{eqtn:toricquotient} \trop(X_{\Sigma}) = (\Rbar^m \setminus V(\trop(B_\Sigma)))/\trop(H_\Sigma),
\end{equation}
where $\trop(B_\Sigma)$ denotes the monomial ideal in $\Rbar[x_1,\dots,x_{m}]$ given by
\[ \trop(B_\Sigma) = \langle \mathbf x^{\hat{\sigma}} : \sigma \in \Sigma \rangle\]
and $\trop(H_\Sigma)$ is the $n$-dimensional subspace 
$\ker(Q^T) \subseteq \mathbb R^m$.  Write
\[\Rbar^{m}_{\sigma} :=
\{\mathbf{w} \in \Rbar^{m} : w_i = \infty \text{ for } i \in \sigma,
w_i \neq \infty \text{ for } i \notin \sigma \}.\]
The equality in
\eqref{eqtn:toricquotient} identifies $U_{\sigma}^{\trop}$ with
the subset $\bigl(\Rbar^{m} \setminus V(\mathbf
x^{\hat{\sigma}})\bigr)/\ker(Q^T)$, and $N(\sigma)$ with
$\bigl(\Rbar^{m}_{\sigma} \setminus V(\mathbf
x^{\hat{\sigma}})\bigr) / \ker(Q^T)$.  The topology on
$\trop(X_{\Sigma})$ is consistent with the quotient topology coming
from the topology on $\Rbar^m$.

\begin{example} \label{e:AnPn}
({\em Tropical affine space, projective space, and the torus.})
The fan $\Sigma$ consisting of the positive orthant in $N_{\mathbb R}
\cong \mathbb R^n$ gives rise to the toric
variety $\mathbb A^n$.  The tropical toric variety $\trop(\mathbb
A^n)$ is $\Rbar^n$.  Any face of the positive orthant has the form 
$\cone \{\mathbf{e}_i: i \in \sigma\}$ for $\sigma \subseteq \{1,\dotsc, n\}$; 
the stratum $N(\sigma)$ corresponds
precisely to the set $\Rbar^n_\sigma \subseteq \Rbar^n$.
The Cox semiring $\Cox(\trop(\mathbb A^n))$
is equal to $\Rbar[x_1,\dots,x_n]$ with the trivial grading $\deg(x_i)=0$
for all $i$.

The fan $\Sigma$ defining $\mathbb P^n$ has rays spanned by
$\mathbf{e}_1,\dots,\mathbf{e}_n$ and $\mathbf{e}_0=-\sum_{i=1}^n
\mathbf{e}_i$ in $N_{\mathbb R} \cong \mathbb R^n$, and cones spanned
by any $j$ of these rays for $j \leq n$. 
The Cox semiring in this case is $\Rbar[x_0,\dots,x_n]$ with the
standard grading $\deg(x_i)=1$ for all $i$. The tropicalization
$\trop(\mathbb P^n)$ equals $(\Rbar^{n+1} \setminus
\{(\infty,\dots,\infty)\})/\mathbb R \mathbf{1}$, where $\mathbf{1} =
(1,\dots,1)$. Figure \ref{f:toricprojective} shows the fan
$\Sigma$ and its corresponding tropical toric variety 
$\trop(\mathbb P^2) = \coprod_{\sigma \in \Sigma} N(\sigma)$ in the case
where $n=2$.
\begin{figure}[htb]
 \begin{center}
  \includegraphics[scale=1]{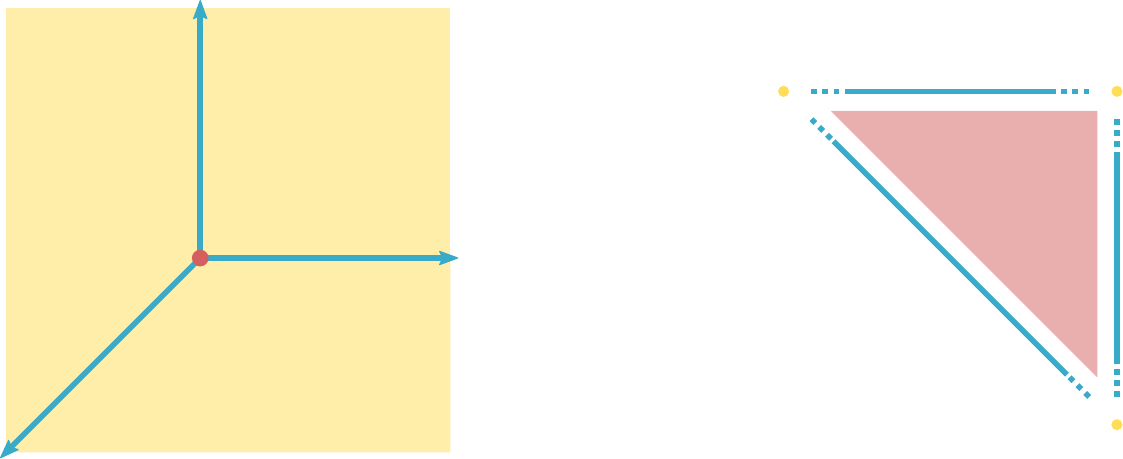}
  \caption{\label{f:toricprojective} 
  Tropical projective space $\trop(\mathbb P^2)$ as a tropical toric variety.}
 \end{center}
\end{figure} 

The fan $\Sigma$ consisting of just the origin in $N_{\mathbb R} \cong
\mathbb R^n$ gives rise to the tropical torus $\mathbb R^n$. Its Cox
semiring is $\Rbar[x_1,\dots,x_n]$ with the trivial grading
$\deg(x_i)=0$ for all $i$.  The localization at the unique cone
$\sigma = \{\mathbf 0\}$ is $\Rbar[x_1,\dots,x_n]_{x_1 \dotsb x_n} \cong
\Rbar[x_1^{\pm 1},\dots,x_n^{\pm 1}]$.  
\end{example}

Subschemes of a classical toric variety can be described by ideals in
its Cox ring.  We now extend this to the tropical realm.

\begin{definition}({\em Tropical ideals in Cox semirings.})
\label{d:tropicalidealtoric}
Let $\Sigma$ be a fan in $N_{\mathbb R}$, and
fix a cone $\sigma \in \Sigma$.  
An ideal $I \subseteq \Rbar[\sigma^{\vee} \cap M]$ is
a {\em tropical ideal} if for every finite set $E$ of monomials in
$\Rbar[\sigma^{\vee} \cap M]$, the set of polynomials with support in
$E$ is the collection of vectors of a valuated matroid on $E$.

Set $S = \Cox(\trop(X_{\Sigma}))$.  A homogeneous ideal $I \subseteq
S$ is a {\em locally tropical ideal} if for every $\sigma \in \Sigma$
the ideal $(IS_{\mathbf x^{\hat{\sigma}}})_{\mathbf 0} \subseteq 
\Rbar[\sigma^{\vee} \cap M]$ is a tropical ideal.

Suppose the rays $\Sigma(1)$ of $\Sigma$ positively span $N_{\mathbb
  R}$, so the grading of $\Cox(\trop(X_{\Sigma}))$ is positive and for
any degree $\mathbf b$ the homogeneous graded piece $I_{\mathbf b}$
contains finitely many monomials (see \cite{MillerSturmfels}*{Chapter
  8}). In this case, a homogeneous ideal $I \subseteq
\Cox(\trop(X_{\Sigma}))$ is a {\em homogeneous tropical ideal} if
every graded piece $I_{\mathbf{b}}$ is the set of vectors of a
valuated matroid on the set of monomials of degree $\mathbf{b}$.
\end{definition}

\begin{proposition}\label{p:tropicalidealsareweak}
Let $\Sigma$ be a fan whose rays positively span $N_{\mathbb R}$.  
Any homogeneous tropical ideal in $\Cox(\trop(X_{\Sigma}))$ is a locally tropical 
ideal.
\end{proposition}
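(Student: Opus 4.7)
The plan is to unpack the definitions and, for each $\sigma \in \Sigma$ and each finite set $E$ of monomials in $\Rbar[\sigma^\vee \cap M]$, show that the set
\[
V := \{f \in (IS_{\mathbf{x}^{\hat{\sigma}}})_{\mathbf{0}} : \supp(f) \subseteq E\}
\]
is the collection of vectors of a valuated matroid on $E$. By the characterization of vector sets recalled in Section~\ref{s:2}, this reduces to verifying that $V$ is an $\Rbar$-subsemimodule of $\Rbar^E$ satisfying the vector elimination axiom. Closure of $V$ under $\tplus$ and under tropical scalar multiplication is inherited directly from the ideal structure of $(IS_{\mathbf{x}^{\hat{\sigma}}})_{\mathbf{0}}$ together with the observation that both operations only shrink supports, so the real work lies in elimination.

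The key device will be to lift elements of $V$ to an honest graded piece of $I$. For $k$ sufficiently large, multiplication by $(\mathbf{x}^{\hat{\sigma}})^k$ sends each monomial $\mathbf{x}^{\mathbf{u}} \in E$ to a genuine monomial of the Cox semiring $\Rbar[x_1, \dotsc, x_m]$, and the resulting lifts $E_k := (\mathbf{x}^{\hat{\sigma}})^k \cdot E$ all share the common degree $\mathbf{b}_k := k \cdot \deg(\mathbf{x}^{\hat{\sigma}}) \in A_{n-1}(\Sigma)$. Any $f \in V$ satisfies $(\mathbf{x}^{\hat{\sigma}})^j f \in I$ for some $j$, so by enlarging $k$ I may further assume that $(\mathbf{x}^{\hat{\sigma}})^k f$ lies in $I_{\mathbf{b}_k}$ with support contained in $E_k$. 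Multiplication by $(\mathbf{x}^{\hat{\sigma}})^k$ thus gives a support-preserving bijection between $V$ and the set of polynomials in $I_{\mathbf{b}_k}$ whose support lies in $E_k$, for any sufficiently large $k$.

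To close the vector elimination axiom in $V$, I would take $f, g \in V$ and a monomial $\mathbf{x}^{\mathbf{u}} \in E$ with $[f]_{\mathbf{x}^{\mathbf{u}}} = [g]_{\mathbf{x}^{\mathbf{u}}} \neq \infty$, pick a single $k$ large enough that $\tilde f := (\mathbf{x}^{\hat{\sigma}})^k f$ and $\tilde g := (\mathbf{x}^{\hat{\sigma}})^k g$ both lie in $I_{\mathbf{b}_k}$, and then apply the vector elimination axiom inside $I_{\mathbf{b}_k} = \V(\M_{\mathbf{b}_k}(I))$ at the monomial $(\mathbf{x}^{\hat{\sigma}})^k \mathbf{x}^{\mathbf{u}}$. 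This produces some $\tilde h \in I_{\mathbf{b}_k}$ with the required vanishing at $(\mathbf{x}^{\hat{\sigma}})^k \mathbf{x}^{\mathbf{u}}$ and the required min-attainment on the other coordinates; the inequality $\tilde h \geq \tilde f \tplus \tilde g$ automatically forces $\supp(\tilde h) \subseteq E_k$. Dividing by $(\mathbf{x}^{\hat{\sigma}})^k$ then delivers the required $h \in V$ witnessing vector elimination for $f$ and $g$.

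The hardest part is really only bookkeeping, namely confirming that the identification $(IS_{\mathbf{x}^{\hat{\sigma}}})_{\mathbf{0}} \subseteq \Rbar[\sigma^\vee \cap M]$ is compatible with support tracking under multiplication by $(\mathbf{x}^{\hat{\sigma}})^k$, and that a single $k$ can be chosen uniformly for the finitely many elements involved at each step. The positive spanning hypothesis on $\Sigma(1)$ enters only to ensure each graded piece $I_{\mathbf{b}_k}$ involves finitely many monomials, so that Definition~\ref{d:tropicalidealprojective} genuinely applies and the lifted matroid $\M_{\mathbf{b}_k}(I)$ has a finite ground set.
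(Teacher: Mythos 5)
Your proposal is correct and takes essentially the same route as the paper's proof: both clear denominators by multiplying by a power of $\mathbf{x}^{\hat{\sigma}}$ to transport the question into a graded piece of $I$, and both exploit the inequality $F \geq G \tplus H$ in the vector elimination axiom to keep the eliminating element's support inside $E_k$. One small imprecision worth noting: your claim that a \emph{single} sufficiently large $k$ gives a support-preserving bijection between $V$ and $\{p \in I_{\mathbf{b}_k} : \supp(p) \subseteq E_k\}$ is not immediate, since distinct elements of $V$ may a priori require different powers of $\mathbf{x}^{\hat{\sigma}}$ to land in $I$ (the paper sidesteps this by passing to the saturation $(I : (\mathbf{x}^{\hat{\sigma}})^\infty)$ and proving that it is again a homogeneous tropical ideal); however, since your elimination step only ever lifts the two fixed elements $f, g$, for which a common $k$ trivially exists, this claim carries no weight and the argument stands as written.
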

\begin{proof}
Write $S = \Cox(\trop(X_{\Sigma}))$, and suppose that $I \subseteq S$ is a homogeneous tropical
ideal.  Fix $\sigma \in \Sigma$, and let $E$ be a
collection of monomials in
$(S_{\mathbf{x}^{\hat{\sigma}}})_{\mathbf{0}}$. 
All monomials in $E$ have the
form $\mathbf{x}^{\mathbf{u}}/(\mathbf{x}^{\hat{\sigma}})^l$ for some $\mathbf{u} \in \mathbb N^s$
and $l \geq 0$.  Choosing a common denominator, we may
assume that the exponent $l$ is the same for all monomials in $E$.
This means that for all $f \in
(IS_{\mathbf{x}^{\hat{\sigma}}})_{\mathbf{0}}$ with support in $E$ we have 
$g := (\mathbf{x}^{\hat{\sigma}})^lf \in
IS_{\mathbf{x}^{\hat{\sigma}}} \cap S = (I: (\mathbf{x}^{\hat{\sigma}})^{\infty})$. 
Note that $g$ is homogeneous of degree
$l\deg(\mathbf{x}^{\hat{\sigma}})$. Moreover, given any homogeneous polynomial 
$g$ of degree $l\deg(\mathbf{x}^{\hat{\sigma}})$
in $(I:(\mathbf{x}^{\hat{\sigma}})^{\infty})$, there is some $l' \geq 0$ for which 
$(\mathbf{x}^{\hat{\sigma}})^{l'}g \in
I$, so $g/(\mathbf{x}^{\hat{\sigma}})^l \in (IS_{\mathbf{x}^{\hat{\sigma}}})_{\mathbf{0}}$.
This gives a bijection between polynomials in
$(IS_{\mathbf{x}^{\hat{\sigma}}})_{\mathbf{0}}$ with support in $E$
and homogeneous polynomials in $(I:(\mathbf{x}^{\hat{\sigma}})^{\infty})$ 
of degree $l\deg(\mathbf{x}^{\hat{\sigma}})$ with
support in $(\mathbf{x}^{\hat{\sigma}})^lE$. 

Since the restriction of the set of vectors of a valuated matroid 
to a subset of its ground set is also a valuated matroid, 
it thus suffices to show that the set
of polynomials in $(I:(\mathbf{x}^{\hat{\sigma}})^{\infty})$ of degree 
$l\deg(\mathbf{x}^{\hat{\sigma}})$ is the set of
vectors of a valuated matroid.  This follows from the fact that if $I$
is a homogeneous tropical ideal then $J :=(I:(\mathbf{x}^{\mathbf{u}})^{\infty})$ is
as well for any monomial $\mathbf{x}^{\mathbf{u}}$.  Indeed, suppose
that $f, g \in J_{\mathbf{b}}$.  Then there is $l \geq 0$ for which
$\mathbf{x}^{l \mathbf{u}}f, \mathbf{x}^{l \mathbf{u}}g \in I$, and thus the
vector elimination axiom for $J_{\mathbf{b}}$ follows from the vector
elimination axiom for $I_{\mathbf{b}+l\deg(\mathbf{x}^{\mathbf{u}})}$.
\end{proof}

\begin{example}
Let $\Sigma$ be the fan in $N_{\mathbb R}$ consisting of just one cone
$\sigma = \{\mathbf 0\}$, so the corresponding
tropical toric variety is the tropical torus $\mathbb R^n$. An ideal
$I \subseteq \Cox(\trop(X_{\Sigma}))$ is a locally tropical ideal if
for every finite set $E$ of monomials in
$(\Cox(\trop(X_{\Sigma}))_{\mathbf{x}^{\hat{\sigma}}})_{\mathbf{0}}
\cong \Rbar[x_1^{\pm 1},\dots,x_n^{\pm 1}]$, the set of polynomials in
the image of $I$ with support in $E$ is the set of vectors of a
valuated matroid on $E$.

If $\Sigma$ is the positive orthant in $N_{\mathbb R}$, so the
corresponding tropical toric variety is $\Rbar^n$, the condition
for an ideal $I \subseteq \Cox(X_{\Sigma}) \cong \Rbar[x_1,\dots,x_n]$ to be
a tropical ideal (and a locally tropical ideal) 
is analogous: for every finite set $E$ of monomials
in $\Rbar[x_1,\dots,x_n]$, the set of polynomials in $I$ with support in $E$
should be the set of vectors of a valuated matroid on $E$.
Note that this agrees with Definition \ref{d:tropicalideal} in the introduction.
\end{example}

For a tropical polynomial $f = \sum a_{\mathbf{u}} \ttimes
\mathbf{x}^{\mathbf{u}} \in \Rbar[x_1,\dots,x_n]$, the
\emph{homogenization} of $f$ is the polynomial $\tilde{f} := \sum
a_{\mathbf{u}} \ttimes x_0^{d-|\mathbf{u}|} \ttimes
\mathbf{x}^{\mathbf{u}} \in \Rbar[x_0,x_1,\dots,x_n]$, where $d =
\max_{a_{\mathbf{u}} \neq \infty} |\mathbf{u}|$.  The homogenization
of an ideal $I \subseteq \Rbar[x_1,\dots,x_n]$ is $\tilde{I} := \langle
\tilde{f} : f \in I \rangle$.  This has the property that every
homogeneous polynomial $g \in \tilde{I}$ has the form $x_0^m \ttimes
\tilde{f}$ for some $m \geq 0$ and $f \in I$, and so the set of homogeneous
polynomials of degree $d$ in $\tilde{I}$ is in bijection with the set
of polynomials in $I$ of degree at most $d$.  This implies that if $I
\subseteq \Rbar[x_1,\dots,x_n]$ is a (locally) tropical ideal
then $\tilde{I}$ is a homogeneous tropical ideal.

\begin{example}({\em The affine ideal of a point.})\label{e:affineidealofpoint}
For any $\mathbf a \in \Rbar^n$, let $J_{\mathbf a} \subseteq
\Rbar[x_1,\dots,x_n]$ be the ideal consisting of all tropical
polynomials $f$ whose variety $V(f)$ contains $\mathbf a$.  Its
homogenization $\tilde{J_{\mathbf a}} \subseteq
\Rbar[x_0,x_1,\dots,x_n]$ is the homogeneous ideal $J_{(0,\mathbf a)}$
of the point $(0,\mathbf a) \in \Rbar^{n+1}$ (see Example
\ref{e:idealofpoint}). By Lemma
\ref{p:tropicalidealsareweak} $J_{\mathbf a}$ is a tropical ideal, as
$(\Rbar[x_0,\dots,x_n]_{x_0})_{\mathbf{0}} \cong
\Rbar[x_1,\dots,x_n]$, with the isomorphism taking $x_0$ to $0$.  We
will see in Example \ref{e:maximalideals} that the ideals of the form
$J_{\mathbf a}$ are precisely the maximal tropical ideals of
$\Rbar[x_1,\dots,x_n]$.
\end{example}

The following example shows that the converse of Proposition \ref{p:tropicalidealsareweak}
is not true.

\begin{example}({\em Locally tropical ideals might not be homogeneous tropical ideals.})
Let $X_{\Sigma} = \mathbb P^1$, so $S := \Cox(\trop(X_{\Sigma})) =
\Rbar[x,y]$ with the standard grading.  Let $I := \langle
x \! \tplus \! y,x^3,y^3 \rangle$.  Then $(IS_{x})_0 = (S_x)_0 = \Rbar[y/x]$, 
$(IS_{y})_0 = (S_y)_0 = \Rbar[x/y]$, and $(IS_{xy})_0
= (S_{xy})_0 = \Rbar[(x/y)^{\pm 1}]$. Since these localizations are all equal 
to their respective coordinate semirings, they are trivially
tropical ideals.  However, 
$I_2$ is the $\Rbar$-semimodule spanned by $x^2\tplus xy$ and $xy \tplus
y^2$, which is not the set of vectors of a valuated matroid, as the polynomial 
$x^2 \tplus y^2$ is not in $I_2$.
\end{example}

\begin{remark}
The notion of being a (locally) tropical ideal is invariant under automorphisms
of the coordinate semirings, as the lack of cancellation forces any automorphism 
to send monomials into monomials.
In the case that the toric variety is the torus, for example, 
the coordinate semiring $\Rbar[x_1^{\pm 1},\dots,x_n^{\pm 1}]$ 
has a natural action of $\GL(n,\mathbb Z)$
by monomial change of coordinates: for any matrix $A \in \GL(n,\mathbb
Z)$ we have $A \cdot \mathbf x^{\mathbf u} = \mathbf x^{A \mathbf u}$.
If $I$ is a tropical ideal in $\Rbar[x_1^{\pm 1},\dots,x_n^{\pm 1}]$
then $A \cdot I$ is as well, as the $\GL(n,\mathbb Z)$ action permutes
finite sets of monomials.
\end{remark}

\begin{remark}
The ascending chain condition (Theorem~\ref{p:ascendingchain}) holds
for homogeneous tropical ideals in the positively-graded case, with the same proof,
but does not hold verbatim for locally tropical ideals.  For example,
let $I_n := \langle x^2, y^2, x \tplus i \ttimes y : 0 \leq i \leq n
\rangle \subseteq \Rbar[x,y] = \Cox(\trop(\mathbb P^1))$.  Then $\{
I_n \}_{n \geq 0}$ is an infinite ascending chain of locally tropical
ideals. However, in this case, for all cones $\sigma$ in the fan of $\mathbb
P^1$ the ideals $(I\Rbar[x,y]_{\mathbf{x}^{\hat{\sigma}}})_\mathbf{0}$ are the
unit ideal $\langle 0 \rangle$.  
All the $I_n$ thus correspond geometrically to the same
(empty) subscheme of $\trop(\mathbb P^1)$. 

For a general tropical
toric variety, one can define an equivalence relation on the set of locally
tropical ideals in $S:=\Cox(\trop(X_{\Sigma}))$ by setting $I \sim J$
if $(IS_{\mathbf{x}^{\hat{\sigma}}})_{\mathbf{0}} =
(JS_{\mathbf{x}^{\hat{\sigma}}})_{\mathbf{0}}$ for all $\sigma \in \Sigma$. 
The proof of Theorem~\ref{p:ascendingchain} then shows that there is no infinite
ascending chain of non-equivalent locally tropical ideals.
\end{remark}

When working with semirings, {congruences} perform some of the
functions of ideals in rings.  A {\em congruence} on a semiring $S$ is an
equivalence relation $\sim$ on $S$ that is compatible with addition and
multiplication: $f \sim g$ and $f' \sim g'$ imply that $(f \tplus f')
\sim (g \tplus g')$ and $(f \ttimes g) \sim (g \ttimes g')$.
In \cite{Giansiracusa2}, Jeffrey and Noah Giansiracusa defined a congruence 
associated to an ideal in the tropical setting, as we now recall.

\begin{definition}({\em Bend relations \cite{Giansiracusa2}.})
Let $S := \Rbar[\sigma^{\vee} \cap M]$ for a cone $\sigma \subseteq
N_{\mathbb R}$.  For a polynomial $f = \bigoplus a_{\mathbf u} \ttimes \mathbf
x^{\mathbf u} \in S$ denote by
$f_{\hat{\mathbf v}}$ the polynomial obtained by deleting the term
involving $\mathbf x^{\mathbf v}$ from $f$, i.e., $f_{\hat{\mathbf v}}
= \bigoplus_{\mathbf u \neq \mathbf v} a_{\mathbf u} \ttimes \mathbf
x^{\mathbf u}$.  The {\em bend relations} of $f$ are
\[B(f) := \{ f \sim f_{\hat{\mathbf v}} : \mathbf x^{\mathbf v} \in \supp(f) \}.\]
The {\em bend congruence} $\mathcal B(I)$ of an ideal $I \subseteq S$ is the congruence on $S$
\[\mathcal B(I) := \langle B(f) : f\in I \rangle .\]
The coordinate semiring associated to $I$ is $S/\mathcal B(I)$.
\end{definition}

This construction is motivated by the fact that 
for any ideal $I \subseteq \Rbar[x_1,\dots,x_n]$, there is a
canonical isomorphism $\Hom(S/\mathcal B(I),\Rbar) \cong V(I) \, \subseteq \Rbar^n$.

In \cite{Giansiracusa2}, the tropicalization of the subscheme of
$\mathbb A^n$ defined by an ideal $J \subseteq K[x_1,\dots,x_n]$ is
defined to be $\Sxpec( \Rbar[x_1,\dots,x_n]/\mathcal B(\trop(J)))$.
Here $\Sxpec$ is used in sense of $\mathbb F_1$-geometry, so it denotes a
topological space with a sheaf of semirings.  The underlying space of
$\Sxpec(R)$ for a semiring $R$ is the set of prime ideals of $R$,
where an ideal $I$ is {\em prime} if $f \ttimes g \in I$ implies that $f \in I$
or $g \in I$.  Similarly, the tropicalization of the subscheme of
$\mathbb P^n$ defined by a homogeneous ideal $J \subseteq K[x_0,\dots,x_n]$ is
$\Proj(\Rbar[x_0,\dots,x_n]/\mathcal B(\trop(J)))$, where the underlying set
of $\Proj$ is the set of homogeneous prime ideals not containing
$\langle x_0,\dots,x_n \rangle$.  This is explained in 
\cite{LorscheidBlueSchemesRelative}*{\S 9}, where these are presented as a
special case of Lorscheid's blueprints.  See also
\cite{LorscheidBluePrints1}, \cite{LorscheidSchemetheoretic},
\cite{Durov}, and \cite{ToenVaquie} for related work.  The results in
\cite{Giansiracusa2} are set in this context, so they describe the
tropicalization of any subscheme of a classical toric variety.  We now
generalize this construction to include more general subschemes of
tropical toric varieties that are not necessarily tropicalizations.

\begin{definition}  \label{d:tropicalsubscheme} ({\em Subschemes of tropical 
toric varieties.})
Let $I$ be a locally tropical ideal of the Cox semiring $S$ of a tropical
toric variety $\trop(X_{\Sigma})$.  The subscheme of $\trop(X_{\Sigma})$ defined 
by $I$ is 
given locally as a subscheme of $U_{\sigma}^{\trop}$ by 
\[\Sxpec(\Rbar[\sigma^{\vee} \cap M]/\mathcal B((IS_{\mathbf 
x^{\hat{\sigma}}})_{\mathbf{0}})).\]
\end{definition}

Definition~\ref{d:tropicalsubscheme} makes sense without the
restriction that $I$ be a locally tropical ideal; 
the reason for requiring this condition is that, as shown in Example~\ref{e:arbitrary},
without it the variety of the subscheme
might not be a finite polyhedral complex.

\begin{remark}\label{rem:tropicalspectra}({\em Tropical spectra.})
In Definition~\ref{d:tropicalsubscheme}, 
instead of using the whole collection $\Sxpec(R)$ of prime ideals 
of a semiring $R$, it might be more useful to just take 
the set of {\em tropical} prime ideals as the underlying topological space,
which could be called the {\em tropical spectrum} $\TSxpec(R)$ of $R$.
For instance, consider $R$ to be the quotient of $\Rbar[x]$ by the congruence that
identifies two polynomials if they correspond to the same function
$\Rbar \rightarrow \Rbar$, such as $x^2 \tplus 1$ and $x^2 \tplus 7
\ttimes x \tplus 1$.  In \cite{Giansiracusa2}*{Proposition 3.4.1},
it is shown that the points of $\Sxpec(R)$ are in bijection
with intervals of $\Rbar$; the points of $\Sxpec(\Rbar[x])$ are
thus at least this complicated.  In Example~\ref{e:tropicalprimeR[x]}  we
show that the set of {\em tropical} prime ideals of $\Rbar[x]$ 
reflects better the classical situation.
\end{remark}

\begin{example}({\em Tropical prime ideals of $\Rbar[x]$.}) 
\label{e:tropicalprimeR[x]}
We classify all tropical prime ideals in the semiring $\Rbar[x]$ of
tropical polynomials in one variable.  For any $a \in \Rbar$, let
$J_a$ be the tropical ideal consisting of all tropical polynomials $f$ whose
variety $V(f) \subseteq \Rbar$ contains $a$ (see Example \ref{e:affineidealofpoint}).
If $a = \infty$ then $J_a$ is the
ideal of polynomials with no constant term. 
The fact that any two tropical
polynomials $f, g$ satisfy $V(f \ttimes g) = V(f) \cup V(g)$ implies
that $J_a$ is a tropical prime ideal.  We will prove that, in fact,
these are the only proper nontrivial prime tropical ideals in $\Rbar[x]$.

Fix a prime tropical ideal $P \subseteq \Rbar[x]$, and let $f =
\bigoplus_{i=0}^d b_i \ttimes x^i \in P$, where $b_d \neq \infty$.
Let $V(f) = \{a_1,\dots,a_s\}$.  The multiplicity $m_i$ of $a_i$ is the
maximum of $j_2 -j_1$ where 
both $j_1, j_2$ achieve the minimum in $\min_j(b_j + ja_i)$.
Set $g := b_d \ttimes \prod_{i=1}^s (x \tplus
a_i)^{m_i}$.  The polynomial $g$ is the tropical polynomial with
smallest coefficients for which $f(z) = g(z)$ for all $z \in \Rbar$; 
see, for example, \cite{GriggManwaring}*{Section 4}. Writing $g
= \bigoplus_{j=0}^d c_j \ttimes x^j$, we have
\[
c_j = \min \,\, \{b_j\} \cup 
\left\{ \frac{b_i \cdot (k-j) + b_k \cdot (j-i)}{k-i} : 0 \leq i < j < k \leq d 
\right\}
\]
for any $j$, as stated in \cite{GriggManwaring}*{Lemma 3.3}.
In particular, $c_0=b_0$ and $c_d=b_d$.
The two consequences of this formula 
that we will use are that $c_i \leq b_i$ for all $i$, and if $c_i<b_i$
then there is $l$ with $c_{i-1}-a_l = c_i = c_{i+1}+a_l$. 

We claim that $g\ttimes f = g^2$, and so
$g^2 \in P$. To show this we need to prove that for all $0 \leq k \leq 2d$
we have $\min_{i+j=k}(b_i+c_j) = \min_{i+j=k}(c_i+c_j)$.  The
inequality $\geq$ follows from the fact that $b_i \geq c_i$ for all $i$.
Suppose now that there is a pair $(i,j)$ with $c_{i}+c_{j} = \min_{i'+j'=i+j}
(c_{i'}+c_{j'}) < \min_{i'+j'=i+j}(b_{i'}+c_{j'})$.  This implies that $c_i<b_i$ 
and
$c_j<b_j$, so $0<i,j<d$ and there must be $l,l'$ with
$c_{i-1}-a_l=c_i=c_{i+1}+a_l$ and $c_{j-1}-a_{l'}
=c_j=c_{j+1}+a_{l'}$.  Without loss of generality we may assume that
$a_l \leq a_{l'}$.  We have $c_i+c_j = c_{i-1}-a_l + c_{j+1}
+a_{l'}$, so $c_{i-1}+c_{j+1} \leq c_i+c_j$.  We may thus replace
$(i,j)$ by $(i-1,j+1)$ and repeat.
After a finite number of
iterations we will have a pair $(i,j)$ with $c_i+c_j$ achieving the
minimum $\min_{i'+j'=i+j} (c_{i'}+c_{j'})$ 
and at least one of $c_i=b_i$ or $c_j=b_j$,
which is a contradiction. This proves that $g^2=g \ttimes f$.

Since $P$ is prime, the fact that $g^2 \in P$ implies that $x \tplus
a \in P$ for some $a = a_i$.  Multiplying by powers of $x$ we see that
$P$ must contain all polynomials of the form $x^j \tplus a \ttimes
x^{j-1}$. Furthermore, the vector elimination axiom 
 forces $P$ to contain all polynomials of the form $a^m \ttimes x^l \tplus a^{l}\ttimes
x^m$. Since these polynomials generate the ideal $J_a$, 
we see that $J_a \subseteq P$.  If $P$ were strictly
larger than $J_a$, it would contain a polynomial $f'$ with $a \not
\in V(f')$.  The argument above then shows that $x \tplus b \in P$ for
some $b \in V(f')$.  But then the vector elimination axiom applied
to $x \tplus a$ and $x \tplus b$ forces $P$ to contain the constant
$\min (a, b)$, which implies that $P$ is the unit ideal.
\end{example}

We conclude this section by extending the notion of variety to ideals in 
a Cox semiring $\Cox(\trop(X_{\Sigma}))$. 
For a fan  $\Sigma \subseteq N_{\mathbb R} \cong \mathbb R^{n}$ 
and $\sigma \in \Sigma$, the coordinate ring of the corresponding
stratum $N(\sigma)$ is 
\[(S_{\mathbf{x}^{\hat{\sigma}}}/\langle x_i
\sim \infty : i \in \sigma \rangle)_{\mathbf{0}} \cong
\Rbar[\sigma^{\perp} \cap M] \cong \Rbar[y_1^{\pm
    1},\dots,y_{n-\dim(\sigma)}^{\pm 1}].\]  
The {\em variety} of an ideal $I \subseteq \Rbar[y_1^{\pm 1},\dots,y_{l}^{\pm 1}]$
is 
$$V(I) := \{ \mathbf{w} \in \mathbb R^l : \text{ the min in }
f(\mathbf{w}) \text{ is achieved at least twice for all } f \in I \setminus \{ \infty \}
\}.$$
For $f \in \Rbar[y_1^{\pm 1},\dots,y_l^{\pm 1}]$ the initial term
$\inn_{\mathbf{w}}(f)$ is defined as in Definition \ref{d:initialIdeal}.  Similarly,
for an ideal $I \subseteq \Rbar[y_1^{\pm 1},\dots,y_l^{\pm 1}]$, its initial
ideal is $\inn_{\mathbf{w}}(I) := \langle \inn_{\mathbf{w}}(f) : f \in
I \rangle \subseteq \mathbb B[y_1^{\pm 1},\dots,y_l^{\pm 1}]$.

\begin{definition}({\em Varieties of ideals in Cox semirings.})
\label{def:variety}
Fix a fan $\Sigma \subseteq N_{\mathbb R} \cong \mathbb R^{n}$, and
let $I$ be a homogeneous ideal in $S:=\Cox(\trop(X_{\Sigma}))$.
We define
$$V_{\sigma}(I) := V((I S_{\mathbf{x}^{\hat{\sigma}}}/\langle x_i \sim \infty : 
i
\in \sigma \rangle)_{\mathbf{0}}) \subseteq N(\sigma) \cong \mathbb 
R^{n-\dim(\sigma)},$$ 
and
$$V_{\Sigma}(I) := \coprod_{\sigma \in \Sigma} V_{\sigma}(I) \subseteq
\coprod N(\sigma) = \trop(X_{\Sigma}).$$
\end{definition}

When $\Sigma$ is the positive orthant in $\mathbb R^n$, the tropical
toric variety $\trop(X_{\Sigma})$ is $\trop(\mathbb A^n)
\cong \Rbar^{n}$, and the Cox semiring $S$ equals $\Rbar[x_1,\dots,x_n]$
with the trivial grading $\deg(x_i)=0$ for all $i$.  In the
next lemma we check that the new notion of variety agrees with that in
\eqref{eqtn:variety}, and also that the easy equivalences of the
Fundamental Theorem \cite{TropicalBook}*{Theorem 3.2.3} hold in this
new setting. 

\begin{proposition} \label{p:varietynonmonomial}
Let $\Sigma$ be the positive
orthant in $N_{\mathbb R}$,
and let $I$ be an ideal in $\Rbar[x_1,\dots,x_n] = \Cox(\trop(\mathbb
A^n))$.  Then
\begin{align*}
 V_{\Sigma}(I) = V(I) = \{ \mathbf{w} \in \Rbar^{n} \mid & \text{ for all } f 
\in I \text{ with } f(\mathbf{w})<\infty,  
 \text{ the minimum in } f(\mathbf{w}) \\
& \text{ is achieved at least twice} \}.
\end{align*}
Moreover, we have $\mathbf{w} \in V(I)$ if and only if
$\inn_{\mathbf{w}}(I)$ does not contain a monomial.  Similarly, if $I$
is an ideal in $\Rbar[x_1^{\pm 1},\dots,x_n^{\pm 1}]$ and $\mathbf{w}
\in \mathbb R^n$, then $\mathbf{w} \in V(I)$ if and only if
$\inn_{\mathbf{w}}(I) \neq  \langle 0 \rangle$.
\end{proposition}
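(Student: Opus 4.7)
The proposition contains three claims: the identification $V_\Sigma(I) = V(I)$ with the pointwise description, the monomial criterion for the initial ideal in the affine case, and the Laurent analogue. I would handle them in this order, with the first being a direct unpacking of definitions and the other two following from a common analysis of initial forms.

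For $V_\Sigma(I) = V(I)$: the cones of the positive-orthant fan $\Sigma$ are indexed by subsets $\sigma \subseteq \{1,\dots,n\}$ of the rays, and the stratum $N(\sigma) \subseteq \trop(\mathbb{A}^n) = \Rbar^n$ is the set of $\mathbf{w}$ with $w_i = \infty$ for $i \in \sigma$ and $w_i \in \mathbb R$ otherwise. Unpacking the definition, $(I S_{\mathbf{x}^{\hat\sigma}}/\langle x_i \sim \infty : i \in \sigma\rangle)_{\mathbf 0}$ sits inside $\Rbar[x_j^{\pm 1} : j \notin \sigma]$ and is obtained from each $f \in I$ by deleting every term whose exponent vector is nonzero at some $i \in \sigma$. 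Under the convention $\infty \cdot 0 = 0$ in the dot product, evaluating this image at $(w_j)_{j \notin \sigma}$ returns exactly $f(\mathbf{w})$, and the image is nonzero precisely when $f(\mathbf{w}) < \infty$. The condition defining $V_\sigma(I)$ therefore translates to ``the minimum in $f(\mathbf{w})$ is attained at least twice for every $f \in I$ with $f(\mathbf{w}) < \infty$''; taking the disjoint union over all $\sigma$ gives $V_\Sigma(I) = V(I)$ as stated.

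For the monomial criterion in the affine case: if $\mathbf{w} \notin V(I)$, some $f \in I$ has $f(\mathbf{w}) < \infty$ with the minimum attained only once, so $\inn_{\mathbf{w}}(f)$ is a single monomial in $\inn_{\mathbf{w}}(I)$. Conversely, suppose $\mathbf{x}^{\mathbf u} \in \inn_{\mathbf{w}}(I)$, and write $\mathbf{x}^{\mathbf u} = \bigoplus_i h_i \ttimes \inn_{\mathbf{w}}(f_i)$ with $h_i \in \mathbb{B}[x_1,\dots,x_n]$ and $f_i \in I$. Because $\mathbb{B}[x_1,\dots,x_n]$ has no additive cancellation, the set of monomials appearing with coefficient $0$ on the right is the union of those occurring in the individual products $h_i \ttimes \inn_{\mathbf{w}}(f_i)$, and this union must reduce to $\{\mathbf{x}^{\mathbf u}\}$. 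Choosing an index $i$ with $h_i \ttimes \inn_{\mathbf{w}}(f_i) \neq \infty$ forces both $h_i$ and $\inn_{\mathbf{w}}(f_i)$ to be single monomials; in particular $f_i(\mathbf{w}) < \infty$ with the minimum uniquely attained, so $\mathbf{w} \notin V(f_i) \supseteq V(I)$.

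The Laurent statement then follows by the same argument: for $\mathbf{w} \in \mathbb R^n$ every nonzero Laurent polynomial takes a finite value at $\mathbf{w}$, so the equivalence ``$\mathbf{w} \in V(I)$ iff $\inn_{\mathbf{w}}(I)$ contains no monomial'' goes through verbatim. The additional ingredient is that every monomial in $\mathbb{B}[x_1^{\pm 1},\dots,x_n^{\pm 1}]$ is a unit, so $\inn_{\mathbf{w}}(I)$ contains a monomial if and only if $\inn_{\mathbf{w}}(I) = \langle 0 \rangle$. The main obstacle is the converse direction of the monomial criterion, where one must exploit the no-cancellation property of the Boolean polynomial semiring to promote ``$\inn_{\mathbf{w}}(I)$ contains a monomial'' to ``some individual $f \in I$ has monomial initial form''—a step with no analogue in classical commutative algebra.
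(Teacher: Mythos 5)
Your proof is correct and follows essentially the same route as the paper. Both arguments rest on the decomposition $f = f^\sigma + f'$ (deleting terms whose exponent touches $\sigma$), the observation that $f(\mathbf{w}) = f^\sigma(\mathbf{w})$ for $\mathbf{w} \in \Rbar^n_\sigma$, and the no-cancellation property of $\mathbb B[x_1,\dots,x_n]$ to reduce ``$\inn_{\mathbf w}(I)$ contains a monomial'' to ``some individual $\inn_{\mathbf w}(f)$ is a monomial.'' The one place you are slightly more explicit than the paper is in the converse direction of the monomial criterion: the paper simply asserts (invoking no cancellation) that a monomial in $\inn_{\mathbf w}(I)$ must be $\inn_{\mathbf w}(f)$ for some $f \in I$, whereas you spell out that the support of a Boolean product is the Minkowski sum of supports, so a product $h_i \ttimes \inn_{\mathbf w}(f_i)$ with support a singleton forces both factors to be single monomials. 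This is a helpful clarification but not a different argument. One point you could tighten: when identifying $(IS_{\mathbf x^{\hat\sigma}}/\langle x_i\sim\infty\rangle)_{\mathbf 0}$ with $\{f^\sigma : f\in I\}$ (up to units), it is worth noting explicitly that every element of $IS_{\mathbf x^{\hat\sigma}}$ can be put over a common denominator $(\mathbf x^{\hat\sigma})^l$ with numerator in $I$, so the ideal really is exhausted by monomial multiples of images of elements of $I$; the paper is equally terse here, so this is a shared gloss rather than a gap in your argument.
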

\begin{proof}
Let $S=\Rbar[x_1,\dots,x_n]$.  To show the containment
$V_{\Sigma}(I) \subseteq V(I)$, fix $\mathbf{w} \in V_{\sigma}(I)
\subseteq N(\sigma) \subseteq \Rbar^n$ for some $\sigma \subseteq
\{1,\dots,n\}$.
Suppose $f \in
I$ satisfies $f(\mathbf{w}) < \infty$.  Write $f = f^{\sigma} + f'$, where
every term in $f'$ is divisible by some variable $x_i$ with $i \in
\sigma$ and no term in $f^{\sigma}$ is. In particular, $f^{\sigma}(\mathbf{w}) < \infty$ and
$f'(\mathbf{w}) = \infty$. We have $f^{\sigma} \in
(IS_{\mathbf{x}^{\hat{\sigma}}}/\langle x_i \sim \infty: i \in \sigma
\rangle)_{\mathbf{0}} \subseteq (S_{\mathbf{x}^{\hat{\sigma}}}/\langle
x_i \sim \infty: i \in \sigma \rangle)_{\mathbf{0}} \cong
\Rbar[x_i^{\pm 1} : i \not \in \sigma]$.  Since
$\mathbf{w} \in V_{\sigma}(I)$, the minimum in $f^{\sigma}(\mathbf{w})$ is
achieved at least twice, and so the same is true for the minimum in
$f(\mathbf{w})$.  This proves that $V_{\sigma}(I) \subseteq V(I)$.

Conversely, suppose that $\mathbf{w} \in \Rbar^{n}$ has the property
that the minimum in $f(\mathbf{w})$ is achieved at least twice
whenever $f \in I$ with $f(\mathbf{w})<\infty$.  Let $\sigma := \{ i :
w_i =\infty \}$.  For any $f \in I$ with $f(\mathbf{w}) < \infty$,
write $f =f^{\sigma}+f'$ as above.  We have $f(\mathbf{w}) = f^{\sigma}(\mathbf{w})$,
and the minimum in $f(\mathbf w)$ is achieved at at least two terms in $f^{\sigma}$.  As the images of
$f$ and $f^{\sigma}$ agree in $S_{\mathbf{x}^{\hat{\sigma}}}/\langle x_i \sim
\infty : i \in \sigma \rangle$, it follows that $\mathbf{w}
\in V_{\sigma}(I)$, and so $\mathbf{w} \in V_{\Sigma}(I)$.  This
completes the proof that $V(I) = V_{\Sigma}(I)$.

To prove the last assertion, note that if $I$ is an ideal in 
the semiring $\Rbar[x_1,\dotsc,x_n]$ (or $\Rbar[x_1^{\pm 1},\dotsc,x_n^{\pm 1}]$),
since there is no cancellation in $\mathbb B[x_0,\dots,x_n]$ (or $\mathbb B[x_1^{\pm 1},\dotsc,x_n^{\pm 1}]$),
the initial ideal $\inn_{\mathbf{w}}(I)$ contains a monomial if and only if there is $f \in I$ with
$\mathbf{x}^{\mathbf{u}} = \inn_{\mathbf{w}}(f)$.
 This happens precisely when $f(\mathbf{w}) < \infty$ and the
minimum in $f(\mathbf{w})$ is achieved only once, i.e., when $\mathbf{w} \not
\in V(I)$. 
\end{proof}

Subvarieties of tropical toric varieties can also be described 
in terms of the quotient construction \eqref{eqtn:toricquotient}.

\begin{lemma} \label{l:CoxConstructionVariety}
Fix a fan $\Sigma \subseteq N_{\mathbb R}$, and let $I$ be a homogeneous ideal in
$\Cox(\trop(X_{\Sigma}))$.  The variety of $I$ is
$$ V_{\Sigma}(I) =  (V(I) \setminus V(\trop(B_\Sigma)))/\ker(Q^T) \subseteq 
\trop(X_{\Sigma}).$$
In particular, if $\Sigma$ is the fan of $\mathbb P^n$ and 
$I$ is a homogeneous ideal in $\Rbar[x_0,\dots,x_n]$
with respect to the standard grading, then $V_\Sigma(I) \subseteq
\trop(\mathbb P^n)$ is equal to $(V(I) \setminus \{ (\infty,\dots,\infty)
\})/\mathbb R \mathbf{1}$.
\end{lemma}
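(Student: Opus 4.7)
The plan is to match the defining decomposition $V_\Sigma(I) = \coprod_{\sigma \in \Sigma} V_\sigma(I)$ with a parallel decomposition of the right-hand side. First I would check that
\[
\Rbar^m \setminus V(\trop(B_\Sigma)) \;=\; \coprod_{\sigma \in \Sigma} \Rbar^m_\sigma.
\]
Because $\trop(B_\Sigma)$ is generated by the monomials $\mathbf{x}^{\hat\sigma}$, a point $\mathbf{w} \in \Rbar^m$ lies outside $V(\trop(B_\Sigma))$ exactly when $\mathbf{x}^{\hat\sigma}(\mathbf{w}) < \infty$ for some $\sigma \in \Sigma$, i.e., $\{i : w_i = \infty\} \subseteq \sigma$; closure of $\Sigma$ under taking faces then forces $\{i : w_i = \infty\}$ itself to be a cone of $\Sigma$, placing $\mathbf{w}$ in a unique $\Rbar^m_\sigma$. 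Combined with the identification $N(\sigma) = \Rbar^m_\sigma/\ker(Q^T)$ from \eqref{eqtn:toricquotient}, this reduces the lemma to showing, for each $\sigma \in \Sigma$, that the quotient map $\pi_\sigma \colon \Rbar^m_\sigma \to N(\sigma)$ carries $V(I) \cap \Rbar^m_\sigma$ onto $V_\sigma(I)$, and that the condition ``$\mathbf{w} \in V(I)$'' depends only on $\pi_\sigma(\mathbf{w})$.

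The content of the stratum-wise statement is an evaluation comparison. For a homogeneous $f \in I$ and $\mathbf{w} \in \Rbar^m_\sigma$, let $f^\sigma$ denote the polynomial obtained by deleting all terms of $f$ divisible by some $x_i$ with $i \in \sigma$. Since $w_i = \infty$ precisely for $i \in \sigma$, the deleted terms contribute $\infty$ to $f(\mathbf{w})$, so $f(\mathbf{w}) = f^\sigma(\mathbf{w})$ and ``$\mathbf{w} \in V(f)$'' is the same condition for $f$ and $f^\sigma$. Under the isomorphism between $(S_{\mathbf{x}^{\hat\sigma}}/\langle x_i \sim \infty : i \in \sigma\rangle)_{\mathbf 0}$ and the coordinate semiring $\Rbar[\sigma^\perp \cap M]$ of $N(\sigma)$, the image of $f$ is represented by $f^\sigma/(\mathbf{x}^{\hat\sigma})^k$ for a suitable $k$; since $\mathbf{x}^{\hat\sigma}(\mathbf{w}) < \infty$ on $\Rbar^m_\sigma$, evaluating this representative at $\pi_\sigma(\mathbf{w})$ shifts $f^\sigma(\mathbf{w})$ by a finite tropical scalar, preserving the ``minimum attained at least twice or equal to $\infty$'' condition. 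Conversely, every element of $(IS_{\mathbf{x}^{\hat\sigma}}/\langle x_i \sim \infty : i \in \sigma\rangle)_{\mathbf 0}$ arises this way, since a general element has the form $g/(\mathbf{x}^{\hat\sigma})^k$ for homogeneous $g \in I$ and its class equals that of $g^\sigma/(\mathbf{x}^{\hat\sigma})^k$. Intersecting over all $f \in I$ then yields the stratum-wise equality.

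For the specialization to $\mathbb{P}^n$, each maximal cone $\sigma$ has dimension $n$, so $\mathbf{x}^{\hat\sigma}$ is a single variable; this gives $\trop(B_\Sigma) = \langle x_0,\dots,x_n\rangle$ and $V(\trop(B_\Sigma)) = \{(\infty,\dots,\infty)\}$. The matrix $Q$ with rows $\mathbf{e}_1,\dots,\mathbf{e}_n,-\sum \mathbf{e}_i$ has $\ker(Q^T) = \mathbb R \mathbf{1}$, producing the stated formula. The main obstacle I anticipate is the bookkeeping behind the evaluation comparison: one must carefully verify that the isomorphism $(S_{\mathbf{x}^{\hat\sigma}}/\langle x_i \sim \infty\rangle)_{\mathbf 0} \cong \Rbar[\sigma^\perp \cap M]$ intertwines evaluation at $\pi_\sigma(\mathbf{w}) \in N(\sigma)$ with evaluation of the chosen lift at $\mathbf{w} \in \Rbar^m_\sigma$ (up to the finite shift by $k\,\mathbf{x}^{\hat\sigma}(\mathbf{w})$). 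This is most delicate when $\Sigma$ does not span $N_\mathbb{R}$ and the last $t$ always-inverted variables of $\Cox(\trop(X_\Sigma))$ must be accommodated, though the argument above does not otherwise require $I$ to be tropical in any sense.
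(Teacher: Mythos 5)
Your overall strategy — decompose by strata $\Rbar^m_\sigma$, then verify a stratum-by-stratum equivalence of vanishing conditions, then quotient by $\ker(Q^T)$ — matches the paper's. Where you genuinely diverge is the stratum-wise comparison itself: you argue directly with the ``minimum attained twice'' definition, tracking how evaluation of $f$ at $\mathbf{w} \in \Rbar^m_\sigma$ relates to evaluation of its image $f^\sigma/\mathbf{x}^{\mathbf a}$ at $\pi_\sigma(\mathbf{w}) \in N(\sigma)$, whereas the paper routes both sides through Proposition~\ref{p:varietynonmonomial}, turning each vanishing condition into a statement about initial ideals and then matching ``$0 \in \inn_{\overline{\mathbf w}}(\text{localized ideal})$'' against ``some power of $\mathbf{x}^{\hat\sigma}$ lies in $\inn_{\mathbf w}(I)$''. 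The paper's route is shorter precisely because Proposition~\ref{p:varietynonmonomial} already absorbed the $f \leftrightarrow f^\sigma$ bookkeeping you would have to redo inline; your route is more self-contained but, as you flag, defers the delicate part (that the isomorphism with $\Rbar[\sigma^\perp \cap M]$ intertwines the two evaluations up to a finite shift). Both work. One small caution: your justification that $\{i : w_i = \infty\}$ must itself lie in $\Sigma$ is not ``closure under faces'' — for a non-simplicial cone $\sigma$, a subset of its rays need not span a face of $\sigma$ — but rather the content of the tropical quotient construction \eqref{eqtn:toricquotient} that the paper cites (and which implicitly carries the relevant hypothesis). Since both you and the paper lean on the same identification, this is not a new gap, but the phrasing as written would not survive scrutiny for a general fan.
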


\begin{proof}
 Write $s$ for the number of rays of $\Sigma$, $t = n -
 \dim(\spann(\Sigma))$, and $m=s+t$.  
Recall that the quotient construction \eqref{eqtn:toricquotient}
identifies the stratum  $N(\sigma)$ with $\bigl(\Rbar^{m}_{\sigma}
 \setminus V(\mathbf x^{\hat{\sigma}})\bigr) / \ker(Q^T)$.  By
Proposition~\ref{p:varietynonmonomial}, a vector $\mathbf{w} \in
\Rbar^{m}_{\sigma}$ has image $\overline{\mathbf{w}}$ in $V_{\sigma}(I)
\subseteq N(\sigma)$ if and only if $0 \notin
\inn_{\overline{\mathbf{w}}}((IS_{\mathbf{x}^{\hat{\sigma}}}/\langle x_i \sim
\infty : i \in \sigma \rangle)_{\mathbf{0}})$.

For any $\mathbf{w} \in \Rbar^m_{\sigma}$, the image in
 $(\mathbb
B[x_1,\dots,x_m]_{\mathbf{x}^{\hat{\sigma}}}/\langle x_i \sim \infty :
i \in \sigma \rangle)_{\mathbf{0}} \cong \mathbb B[y_1^{\pm
    1},\dots,y_{n-\dim(\sigma)}^{\pm 1}]$ of $\inn_{\mathbf{w}}(I)$ 
 equals
$\inn_{\overline{\mathbf{w}}}((IS_{\mathbf{x}^{\hat{\sigma}}}/\langle
x_i \sim \infty : i \in \sigma \rangle)_{\mathbf{0}})$.  Thus we have
$0 \in
\inn_{\overline{\mathbf{w}}}((IS_{\mathbf{x}^{\hat{\sigma}}}/\langle
x_i \sim \infty : i \in \sigma \rangle)_{\mathbf{0}})$ if and only if
some power of $\mathbf{x}^{\hat{\sigma}}$ lies in the ideal
$\inn_{\mathbf{w}}(I)\mathbb B[x_1,\dots,x_m]/\langle x_i \sim \infty
: i \in \sigma \rangle$.  Since $w_i = \infty$ for $i \in \sigma$,
this happens if and only if there is $f \in I$ with
$\inn_{\mathbf{w}}(f)$ equal to some power of
$\mathbf{x}^{\hat{\sigma}}$, which is in turn equivalent to
$\mathbf{w} \not \in V(I)$.
\end{proof}

\section{Gr\"obner complex for tropical ideals}\label{sec:complex}

\label{s:5}

In this section we define and study the Gr\"obner complex of a homogeneous tropical ideal,
and use it to show that the variety of any tropical ideal 
is always the support of a finite polyhedral complex.  
We also show that tropical ideals satisfy the 
weak Nullstellensatz.

We start by extending the definition of a polyhedral complex to
$\Rbar^{n}$. Recall that for $\sigma \subseteq \{1,\dotsc,n\}$ 
we write $\Rbar^{n}_{\sigma} =
\{\mathbf{w} \in \Rbar^{n} : w_i = \infty \text{ for } i \in \sigma,
w_i \neq \infty \text{ for } i \notin \sigma \}$.

\begin{definition}({\em Polyhedral complexes in $\Rbar^{n}$.})
\label{d:polyhedralcomplexRbar}
A {\em polyhedral complex} $\Delta$ in $\Rbar^{n}$ is a collection
of polyhedral complexes $\Delta_{\sigma} \subseteq \Rbar^{n}_{\sigma} \cong \mathbb
R^{n-|\sigma|}$ indexed by the set of all subsets $\sigma \subseteq \{1,\dots,n\}$,
with the additional requirement that if $\tau \subseteq \sigma$ and $P$ is a
polyhedron in $\Delta_{\tau}$, then the closure of $P$ in
$\Rbar^{n}$ intersected with $\Rbar^{n}_{\sigma}$ is {\em contained} in
a polyhedron of $\Delta_{\sigma}$.  The
support of any polyhedral complex is thus  a closed subset of $\Rbar^{n}$.
A polyhedral complex $\Delta$ is {\em $\mathbb R$-rational} if for all
$\sigma \subseteq \{1,\dotsc,n\}$, each polyhedron $P \in
\Delta_{\sigma}$ has the form $P= \{\mathbf{x} \in \mathbb
R^{n-|\sigma|} : A\mathbf x \leq \mathbf{b} \}$ with $\mathbf{b} \in
\mathbb R^l$ and $A \in \mathbb Q^{l \times (n-|\sigma|)}$ for some
$l \geq 0$.
\end{definition}

This definition differs slightly from the one given in 
\cite{TropicalHomology}*{\S 2.1}, in that the closure of a polyhedron
intersected with the boundary is not required to be a polyhedron in the complex, 
but just to be contained in one.  A motivation
for using this weaker condition is given in Example~\ref{e:normalcomplex} below.

The following theorem guarantees the existence of the Gr\"obner complex for 
any homogeneous tropical ideal in $\Rbar[x_0,\dots,x_n]$.  

\begin{theorem}({Gr\"obner complexes exist.}) \label{t:Grobnercomplex}
Let $I \subseteq \Rbar[x_0,\dots,x_n]$ be a homogeneous tropical
ideal.  There is a finite $\mathbb R$-rational polyhedral complex
$\Sigma(I) \subseteq \Rbar^{n+1}$, whose support is all of
$\Rbar^{n+1}$, such that for any $\sigma \subseteq \{0,\dots,n\}$ and
any $\mathbf{w}, \mathbf{w}' \in \Rbar^{n+1}_{\sigma}$, the vectors
$\mathbf{w}$ and $\mathbf{w}'$ lie in the same cell of $\Sigma(I)$ if
and only if $\inn_{\mathbf{w}}(I)=\inn_{\mathbf{w}'}(I)$.  The
polyhedral complex $\Sigma(I)$ is called the {\em Gr\"obner complex}
of $I$.
\end{theorem}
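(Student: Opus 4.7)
My plan for the proof runs through a common refinement of degree-wise polyhedral decompositions. For each $d \geq 0$ and each $\sigma \subseteq \{0,\dotsc,n\}$, Theorem~\ref{t:initial} describes the matroid $M_d(\inn_{\mathbf{w}}(I))$ for $\mathbf{w} \in \Rbar^{n+1}_{\sigma}$ as $\inn_{\hat{\mathbf{w}}}(\M_d(I)/\mon^\sigma_d) \oplus \mon^\sigma_d$, where $\hat{\mathbf{w}}_{\mathbf{x}^{\mathbf{u}}} := \mathbf{w}\cdot\mathbf{u}$. By Lemma~\ref{lem:initial}, the locus where $\inn_{\hat{\mathbf{w}}}(\M_d(I)/\mon^\sigma_d)$ equals a fixed matroid $N$ on $\mon_d \setminus \mon^\sigma_d$ is the set of $\mathbf{w}$ at which the minimum of $p_d(B) + \hat{\mathbf{w}}(B)$ over bases $B$ is achieved exactly at the bases of $N$, where $p_d$ is the basis valuation function of $\M_d(I)/\mon^\sigma_d$. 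This is an $\mathbb{R}$-rational relatively open polyhedron, and since there are only finitely many matroids on a finite ground set, the closures of these cells assemble into a finite $\mathbb{R}$-rational polyhedral decomposition $\Sigma_{d,\sigma}$ of $\Rbar^{n+1}_{\sigma}$. I would then verify that $\Sigma_d := \{\Sigma_{d,\sigma}\}_\sigma$ satisfies the boundary compatibility in Definition~\ref{d:polyhedralcomplexRbar}: as $w_i \to \infty$ for $i \in \sigma \setminus \tau$, the weight of any monomial divisible by $x_i$ tends to $\infty$, which mirrors the contraction by $\mon^\sigma_d$ in Theorem~\ref{t:initial} and places the limits of $\tau$-cells inside the expected $\sigma$-cells.

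With the $\Sigma_d$ in hand, I would define $\Sigma^{\leq D}(I)$ to be the common refinement of $\Sigma_0, \dotsc, \Sigma_D$, which is again a finite $\mathbb{R}$-rational polyhedral complex of support $\Rbar^{n+1}$. Two points $\mathbf{w}, \mathbf{w}' \in \Rbar^{n+1}_{\sigma}$ lie in the same cell of $\Sigma^{\leq D}(I)$ precisely when $\inn_{\mathbf{w}}(I)_d = \inn_{\mathbf{w}'}(I)_d$ for all $d \leq D$. The theorem then reduces to showing that there is some $D_0$ such that $\Sigma^{\leq D_0}(I) = \Sigma^{\leq D}(I)$ for every $D \geq D_0$; if so, I set $\Sigma(I) := \Sigma^{\leq D_0}(I)$, and the defining cell-equivalence coincides with $\inn_{\mathbf{w}}(I) = \inn_{\mathbf{w}'}(I)$.

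This stabilization claim is the main obstacle, and the plan is to derive it from the ascending chain condition (Theorem~\ref{p:ascendingchain}). Suppose stabilization fails. Then one can inductively produce points $\mathbf{w}_1, \mathbf{w}_2, \dotsc \in \Rbar^{n+1}$, all in a common stratum $\Rbar^{n+1}_{\sigma}$ after passing to a subsequence, and degrees $D_1 < D_2 < \cdots$ such that $\inn_{\mathbf{w}_k}(I)_d = \inn_{\mathbf{w}_{k+1}}(I)_d$ for all $d \leq D_k$ but the full initial ideals $\inn_{\mathbf{w}_k}(I)$ are pairwise distinct. Each $\inn_{\mathbf{w}_k}(I)$ is a homogeneous tropical ideal in $\mathbb{B}[x_0,\dotsc,x_n]$ (Theorem~\ref{t:initial}), and all share the Hilbert function $H_I$ (Corollary~\ref{cor:samehilbert}). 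From this sequence I would extract a strictly ascending chain of tropical ideals in $\mathbb{B}[x_0,\dotsc,x_n]$ whose degree-by-degree structure records the accumulated discrepancies between successive $\inn_{\mathbf{w}_k}(I)$, contradicting Theorem~\ref{p:ascendingchain}. The delicate point is that Example~\ref{e:infinitefamily} rules out naive finiteness arguments based purely on Hilbert data, so the chain construction must genuinely exploit the fact that all the $\inn_{\mathbf{w}_k}(I)$ arise as initial degenerations of the same ambient tropical ideal $I$.
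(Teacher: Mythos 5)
Your first two steps — degree-wise decompositions built from Theorem~\ref{t:initial} and Lemma~\ref{lem:initial}, followed by common refinement — match the paper's strategy (Lemma~\ref{l:complexindegreed} builds exactly these via normal complexes of the polynomials $F^\sigma_d$). The gap is in the stabilization step. You propose to derive it from the ascending chain condition by extracting a strictly ascending chain of tropical ideals from a sequence of pairwise distinct initial ideals $\inn_{\mathbf{w}_k}(I)$, all with Hilbert function $H_I$. But the first part of Theorem~\ref{p:ascendingchain} says precisely that two nested homogeneous tropical ideals with the same Hilbert function are equal; since all your $\inn_{\mathbf{w}_k}(I)$ share the Hilbert function $H_I$ and are pairwise distinct, they are pairwise \emph{incomparable}. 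No chain of any length greater than one can be extracted from them, so the argument you sketch cannot get off the ground. You acknowledge the difficulty by calling it a ``delicate point'' and gesturing at exploiting the common ambient ideal $I$, but you supply no mechanism for doing so.

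The paper's Lemma~\ref{l:Degreebound} resolves this with a genuinely different finiteness input: there are only finitely many monomial ideals in $\Rbar[x_0,\dots,x_n]$ with a given Hilbert function (Maclagan's antichain theorem, \cite{MaclaganAntichains}). Taking $D$ to be the maximum degree of a generator across this finite list, and using Lemma~\ref{l:monomial} to see that a generic point of each open cell gives a monomial initial ideal with Hilbert function $H_I$, one concludes that $\Sigma(I_e)_\emptyset$ is refined by $\Sigma(I_D)_\emptyset$ for all $e > D$. This gives the explicit degree bound your plan needs; without it, or some substitute finiteness principle of comparable strength, the common-refinement construction has no reason to terminate.
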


Theorem \ref{t:Grobnercomplex} is a consequence of a slightly stronger 
result, stated in Theorem~\ref{t:statepolytope}.  

\begin{example} \label{e:normalcomplex}
Consider the ideal $J := \langle xy - xz \rangle \subseteq \mathbb C[x,y,z]$, 
and let $I := \trop(J) \subseteq \Rbar[x,y,z]$.
The complex $\Sigma(I)_{\emptyset} \subseteq \mathbb R^3$ has
three cones, depending on whether $w_2-w_3$ is positive, negative, or
zero.  The intersection of the closure of the cone $\{\mathbf{w} \in \mathbb
R^3 : w_2>w_3 \}$ with $\Rbar^3_{\{1\}}$ is $\{(\infty,w_2,w_3) :
w_2>w_3 \}$, even though $\Sigma(I)_{\{1\}}$ consists of just one
polyhedron, as $\inn_{\mathbf{w}}(f) = \infty$ for all $f \in I$ and 
$\mathbf{w} \in \Rbar^3_{\{1\}}$.
\end{example}

We will make use of the following notation. 
The \emph{normal complex} $\mathcal N(f)$ of a polynomial $f \in
\Rbar[x_1,\dotsc,x_l]$ 
is the $\mathbb R$-rational polyhedral complex
in $\mathbb R^{l}$ whose polyhedra are the closures of the sets
$C[\mathbf{w}] = \{ \mathbf{w}' \in \mathbb R^{l} :
\inn_{\mathbf{w}'}(f) = \inn_{\mathbf{w}}(f) \}$ for $\mathbf{w} \in
\mathbb R^{l}$.

\begin{lemma} \label{l:complexindegreed}
Let $I \subseteq \Rbar[x_0,\dots,x_n]$ be a homogeneous tropical ideal, and 
fix a degree $d \geq 0$. There is a finite $\mathbb R$-rational polyhedral complex 
$\Sigma(I_d) \subseteq \Rbar^{n+1}$, whose support is all of $\Rbar^{n+1}$, 
such that for any $\sigma \subseteq \{0,\dots,n\}$ 
and any $\mathbf{w}, \mathbf{w}' \in \Rbar^{n+1}_{\sigma}$, the vectors
$\mathbf{w}$ and $\mathbf{w}'$ lie in the same cell of $\Sigma(I_d)$ 
if and only if $\inn_{\mathbf{w}}(I)_d=\inn_{\mathbf{w}'}(I)_d$.
\end{lemma}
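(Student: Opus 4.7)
By Theorem~\ref{t:initial}, for $\mathbf{w} \in \Rbar^{n+1}$ with $\sigma = \{i : w_i = \infty\}$, the degree-$d$ initial ideal $\inn_{\mathbf{w}}(I)_d$ is determined by $\sigma$ together with the initial matroid $\inn_{\hat{\mathbf{w}}}(\M_d(I)/\mon_d^\sigma)$. By Lemma~\ref{lem:initial}, this initial matroid is in turn determined by the subset of bases $B$ of $\M_d(I)/\mon_d^\sigma$ that minimize $p_d^\sigma(B) - \mathbf{w} \cdot \mathbf{u}_B$, where $\mathbf{u}_B := \sum_{\mathbf{x}^{\mathbf{u}} \in B} \mathbf{u}$ and $p_d^\sigma$ denotes the basis valuation function of the contraction $\M_d(I)/\mon_d^\sigma$. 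Note that $\mathbf{w} \cdot \mathbf{u}_B$ is automatically finite here, because $B \subseteq \mon_d \setminus \mon_d^\sigma$ forces $(\mathbf{u}_B)_i = 0$ for all $i \in \sigma$.

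For each $\sigma \subseteq \{0, \ldots, n\}$, I define $\Sigma(I_d)_\sigma$ as the $\mathbb{Q}$-rational polyhedral subdivision of $\Rbar^{n+1}_\sigma \cong \mathbb{R}^{n+1-|\sigma|}$ induced by the finite hyperplane arrangement consisting of all hyperplanes $p_d^\sigma(B_1) - p_d^\sigma(B_2) = \mathbf{w} \cdot (\mathbf{u}_{B_1} - \mathbf{u}_{B_2})$, for pairs of bases $B_1, B_2$ of $\M_d(I)/\mon_d^\sigma$. Since the contraction has finitely many bases, this is a finite $\mathbb{R}$-rational polyhedral complex with support $\Rbar^{n+1}_\sigma$, and by construction two points of $\Rbar^{n+1}_\sigma$ lie in the same relatively open cell if and only if they determine the same minimizing basis set, hence the same $\inn_{\mathbf{w}}(I)_d$.

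The main obstacle is to verify the polyhedral complex compatibility of Definition~\ref{d:polyhedralcomplexRbar}: for $\tau \subseteq \sigma$ and a polyhedron $P \in \Sigma(I_d)_\tau$, the intersection $\overline{P} \cap \Rbar^{n+1}_\sigma$ must be contained in a single closed cell of $\Sigma(I_d)_\sigma$. The plan is to analyze the defining inequalities of $P$ under the limit $w_i \to \infty$ for $i \in \sigma \setminus \tau$: an inequality $p_d^\tau(B_1) - p_d^\tau(B_2) \leq \mathbf{w} \cdot (\mathbf{u}_{B_1} - \mathbf{u}_{B_2})$ either becomes trivially satisfied, becomes infeasible, or persists in the limit, depending on the signs of the coefficients $(\mathbf{u}_{B_1} - \mathbf{u}_{B_2})_i$ for $i \in \sigma \setminus \tau$; it persists precisely when $B_1$ and $B_2$ have the same $(\sigma \setminus \tau)$-content. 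Using the contraction identity $(\M_d(I)/\mon_d^\tau)/(\mon_d^\sigma \setminus \mon_d^\tau) = \M_d(I)/\mon_d^\sigma$, bases of $\M_d(I)/\mon_d^\sigma$ lift, via a fixed maximal independent subset $B_{\mon_d^\sigma \setminus \mon_d^\tau}$ of the restriction to $\mon_d^\sigma \setminus \mon_d^\tau$, to bases of $\M_d(I)/\mon_d^\tau$ all sharing a common $(\sigma \setminus \tau)$-content, and under this lift the persisting inequalities of $\overline{P}$ translate exactly into defining inequalities for cells in $\Sigma(I_d)_\sigma$. Making this matching of the $\tau$-Gröbner data and the $\sigma$-Gröbner data rigorous, and deducing that only a single closed cell of $\Sigma(I_d)_\sigma$ meets the intersection, is where the real work lies.
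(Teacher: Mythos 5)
Your opening reduction is correct and matches the paper's: by Theorem~\ref{t:initial} and Lemma~\ref{lem:initial}, $\inn_{\mathbf{w}}(I)_d$ for $\mathbf{w}\in\Rbar^{n+1}_\sigma$ is determined by $\sigma$ together with the set of bases $B$ of $\M_d(I)/\mon^\sigma_d$ minimizing $p_d^\sigma(B) - \mathbf{w}\cdot\mathbf{u}_B$, and you correctly identify the gluing condition of Definition~\ref{d:polyhedralcomplexRbar} as the crux. But there are two real problems.

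First, the hyperplane arrangement you propose is in general \emph{strictly finer} than what the lemma requires, so the claimed ``if and only if'' fails. Inside a region where some basis $B_3$ is the unique minimizer, a wall $p_d^\sigma(B_1) - p_d^\sigma(B_2) = \mathbf{w}\cdot(\mathbf{u}_{B_1} - \mathbf{u}_{B_2})$ comparing two \emph{non}-minimizing bases $B_1,B_2$ can still cut through; on both sides the minimizer set is $\{B_3\}$, yet the points lie in different cells of your arrangement. The lemma needs the coarsening that merges cells with the same minimizer set, which is exactly the normal complex $\mathcal{N}(F_d^\sigma)$ of the tropical polynomial $F_d^\sigma := \bigoplus_B p_d^\sigma(B) \ttimes \prod_{\mathbf{x}^\mathbf{u}\in\mon_d\setminus(\mon^\sigma_d\cup B)}\mathbf{x}^\mathbf{u}$ that the paper constructs; by definition its cells are the loci where $\inn_{\mathbf{w}}(F_d^\sigma)$, i.e.\ the minimizer set, is constant.

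Second, and more seriously, the gluing argument --- which you correctly call ``the real work'' --- is left as a plan, not a proof, and the plan itself has a gap: your trichotomy (trivially satisfied / infeasible / persists) for an inequality as $w_i\to\infty$, $i\in\sigma\setminus\tau$, breaks down when $\mathbf{u}_{B_1}-\mathbf{u}_{B_2}$ has coordinates of mixed sign on $\sigma\setminus\tau$, since the limiting comparison is then indeterminate and cannot be read off sign-by-sign. The paper's gluing proof avoids inequality bookkeeping. It restricts to a maximal open cell $C$ of $\Sigma(I_d)_\emptyset$, uses Lemma~\ref{l:monomial} to find $\mathbf{v}\in C$ with $\inn_{\mathbf{v}}(I)$ monomial so that the minimizing basis $B_0$ is \emph{unique} on all of $C$, and then shows that for a sequence $\mathbf{v}_i\to\mathbf{w}\in\Rbar^{n+1}_\sigma$ the minimizer must absorb a full basis $B_{\mon^\sigma_d}$ of $\mon^\sigma_d$ and that $B_0\setminus B_{\mon^\sigma_d}$ is the unique minimizer of the $\sigma$-problem. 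That uniqueness is what forces $\overline{C}\cap\Rbar^{n+1}_\sigma$ into a single cell of $\Sigma(I_d)_\sigma$. Your idea of lifting bases via a fixed maximal independent subset of $\mon^\sigma_d\setminus\mon^\tau_d$ is circling the right combinatorics (it mirrors identifying $B_0\cap\mon^\sigma_d$ with $B_{\mon^\sigma_d}$), but without the uniqueness input from Lemma~\ref{l:monomial}, and without actually carrying out the matching you defer, the compatibility is unproved.
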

\begin{proof}
Fix $\sigma \subseteq \{0,\dots,n\}$.  As in the statement of
Theorem~\ref{t:initial}, let $\mon^{\sigma}_d$ be the set of monomials in $\mon_d$
that are divisible by some variable $x_i$ with $i \in \sigma$, and write
$\M^{\sigma}_d$ for the valuated matroid $\M_d(I) / \mon^{\sigma}_d$.  
Let $F^{\sigma}_d \in \Rbar[x_0,\dots,x_n]$ be the tropical polynomial
\begin{equation} \label{eqtn:GrobnerComplexHypersurface}
 F^{\sigma}_d := \bigoplus_{B \text{ basis of } \underline{\M^{\sigma}_d}} p(B) 
\ttimes 
 \left( \prod_{\mathbf{x}^\mathbf{u} \in \mon_d \setminus (\mon^{\sigma}_d \cup B)} 
\mathbf{x}^\mathbf{u} \right). 
\end{equation}

We claim that for $\mathbf{w}, \mathbf{w}' \in \Rbar^{n+1}_{\sigma}$,
we have $\inn_{\mathbf{w}}(I)_d=\inn_{\mathbf{w}'}(I)_d$ if and only
if $\inn_{\mathbf{w}}(F_d^{\sigma})=\inn_{\mathbf{w}'}(F_d^{\sigma})$.
By replacing $I$ by the ideal $I^{\sigma} = \langle f^{\sigma} : f \in
I \rangle \subseteq \Rbar[x_i: i \notin \sigma]$ introduced in the
proof of Theorem~\ref{t:initial} we may assume that $\sigma =
\emptyset$.  Indeed, $F_d^{\sigma}(I) = F_d^{\emptyset}(I^{\sigma})$
up to tropical scaling, and $\inn_{\mathbf{w}}(I) =
\inn_{\overline{\mathbf{w}}}(I^{\sigma})$ for any $\mathbf{w} \in
\Rbar^{n+1}_{\sigma}$, where $\overline{\mathbf{w}}$ is the projection
of $\mathbf{w}$ to coordinates not in $\sigma$.

By Theorem~\ref{t:initial}, two vectors $\mathbf{w}, \mathbf{w}' \in
\Rbar_{\emptyset}^{n+1}= \mathbb R^{n+1}$ satisfy
$\inn_{\mathbf{w}}(I)_d=\inn_{\mathbf{w}'}(I)_d$ if and only if
$\inn_{\hat{\mathbf{w}}}(\M_d)=\inn_{\hat{\mathbf{w}}'}(\M_d)$,
where $\hat{\mathbf{w}} \in \mathbb R^{\mon_d}$ is given by
$\hat{\mathbf{w}}_{\mathbf{x}^{\mathbf{u}}} = \mathbf{w} \cdot
\mathbf{u}$.
Let $r = \rank(\M_d(I))$.  By
Lemma~\ref{lem:initial}, a set $B \in \binom{\mon_d}{r}$ is a basis of
$\inn_{\hat{\mathbf{w}}}(\M_d(I))$ if and only if $B$ minimizes the
function $p(B) - \sum_{\mathbf{x}^{\mathbf{u}} \in B} \mathbf{w}
\cdot \mathbf{u}$, and thus if and only if $B$ minimizes 
$p(B) + \sum_{\mathbf{x}^{\mathbf{u}} \in \mon_d \setminus B}
\mathbf{w} \cdot \mathbf{u}$.  It follows that
$\inn_{\hat{\mathbf{w}}}(\M_d(I)) =
\inn_{\hat{\mathbf{w}}'}(\M_d(I))$ if and only if the bases 
$B$ of $\underline{\M^{\sigma}_d}$ at which the minimum in
$F_d^{\emptyset}(\mathbf{w})$ is achieved are the same as for $F_d^{\emptyset}(\mathbf{w}')$,
which happens if and only if $\inn_{\mathbf{w}}(F_d^{\emptyset}) =
\inn_{\mathbf{w}'}(F_d^{\emptyset})$.

We now set the polyhedral complex $\Sigma(I_d)_{\sigma}$ in
$\Rbar^{n+1}_{\sigma}$ to be normal complex
$\mathcal N (F^{\sigma}_d)$ in $\Rbar^{n+1}_{\sigma} \cong \mathbb R^{n+1-|\sigma|}$.
It remains to check that the $\Sigma(I_d)_{\sigma}$ glue together to form 
an $\mathbb R$-rational polyhedral complex $\Sigma(I_d)$ in
$\Rbar^{n+1}$: if $\tau \subseteq \sigma$ and $P$ is a
polyhedron in $\Sigma(I_d)_{\tau}$ then the closure of $P$ in
$\Rbar^{n+1}$ intersected with $\Rbar^{n+1}_{\sigma}$ should  be contained in a 
polyhedron of $\Sigma(I_d)_{\sigma}$. Again, it suffices to consider
the case $\tau = \emptyset$. Let $C \subseteq \Rbar^{n+1}_{\emptyset} \cong \mathbb R^{n+1}$ be a
maximal open cell of $\Sigma(I_d)_\emptyset$, and suppose that
$(\mathbf v_i)_{i \geq 0}$ is a sequence of vectors in $C$ converging
to $\mathbf w \in \Rbar^{n+1}_{\sigma}$, where $\mathbf w$ also lies
in a maximal open cell of $\Sigma(I_d)_\sigma$. Since $C$ is a cell of
$\Sigma(I_d)_\emptyset$, the initial ideals $\inn_{\mathbf{v}_i}(I)$
are the same for any $i$. We will show that $\inn_{\mathbf w}(I)_d$
depends only on $\inn_{\mathbf v_i}(I)_d$, which implies the desired
containment.

By Theorem \ref{t:initial} and Lemma~\ref{lem:initial}, 
the $\Rbar$-semimodule $\inn_{\mathbf{v}_i}(I)_d$ is 
determined by the matroid whose bases are the subsets $B \in \binom{\mon_d}{r}$ 
minimizing 
\begin{equation}\label{eq:minimize}
\textstyle p(B) - \sum_{\mathbf u \in B} \mathbf v_i \cdot \mathbf u.
\end{equation}
We claim that this minimum is achieved at a single basis $B_0$.
Indeed, by Lemma~\ref{l:monomial}, there is $\mathbf{v} \in C$ with
$\inn_{\mathbf{v}}(I)$ generated by monomials.  Thus
$\inn_{\mathbf{v}'}(I)_d$ is spanned as an
$\Rbar$-semimodule by monomials for all $\mathbf{v}' \in C$, and so the corresponding
initial matroid has only one basis $B_0$.  Note that $B_0$ does not depend
on the choice of $\mathbf{v}' \in C$.  

The matroid of the $\Rbar$-semimodule
$\inn_{\mathbf w}(I)_d$ has as bases the subsets of the form $B' \sqcup
\mon^{\sigma}_d$ with $B' \in \binom{\mon_d \setminus
  \mon^{\sigma}_d}{r'}$ minimizing
\begin{equation}\label{eq:minimize2}
\textstyle p(B' \sqcup B_{\mon^{\sigma}_d}) - \sum_{\mathbf u \in B'} \mathbf w \cdot 
\mathbf u,
\end{equation} 
where $B_{\mon^{\sigma}_d}$ is any fixed basis of $\mon^{\sigma}_d$,
and $r' + |B_{\mon^{\sigma}_d}|$ is the rank of $\mathcal M_d(I)$.
  Again, this minimum is achieved at only one subset $B'_0$. We claim
  that $B'_0 = B_0 \setminus \mon^{\sigma}_d$, which implies that
  $\inn_{\mathbf w}(I)_d$ depends only on $\inn_{\mathbf v_i}(I)_d$,
  as desired.

To prove the claim, rewrite \eqref{eq:minimize} as 
\begin{equation}\label{eq:minimize3}
\textstyle p(B) - \sum_{\mathbf u \in B \setminus \mon^{\sigma}_d} \mathbf v_i \cdot \mathbf 
u - \sum_{\mathbf u \in B \cap \mon^{\sigma}_d} \mathbf v_i \cdot \mathbf u.
\end{equation}
As $i$ tends to $\infty$ the coordinates of $\mathbf v_i$ indexed by $\sigma$ 
tend to $\infty$, and the others tend to finite values. The terms $\mathbf 
v_i \cdot \mathbf u$ in \eqref{eq:minimize3} with $\mathbf u \in B \cap \mon^{\sigma}_d$ can 
thus be made arbitrarily large, while for $\mathbf u \in B \setminus \mon^{\sigma}_d$ they 
remain bounded. 
It follows that the basis $B_0$ minimizing \eqref{eq:minimize3} contains as many 
elements of $\mon^{\sigma}_d$ as possible, and so $|B_0 \cap \mon^{\sigma}_d| = r(\mon^{\sigma}_d)$. The intersection 
$B_0 \cap \mon^{\sigma}_d$ is then a basis $B_{\mon^{\sigma}_d}$ of $\mon^{\sigma}_d$. Furthermore, the set $B_0 
\setminus B_{\mon^{\sigma}_d}$ must be the subset $B'_0$ minimizing \eqref{eq:minimize2}, as 
otherwise $B'_0 \sqcup B_{\mon^{\sigma}_d}$ would yield a smaller value of 
\eqref{eq:minimize3} than $B_0$. 
\end{proof}

\begin{lemma} \label{l:Degreebound}
Let $I \subseteq \Rbar[x_0,\dots,x_n]$ be a homogeneous tropical ideal. 
There is a degree $D \geq 0$ such that for any $\mathbf{w}, \mathbf{w}' \in 
\mathbb R^{n+1}$, if $\inn_{\mathbf{w}}(I)_d
=\inn_{\mathbf{w}'}(I)_d$ for all $d \leq D$ then
$\inn_{\mathbf{w}}(I)=\inn_{\mathbf{w}'}(I)$.
\end{lemma}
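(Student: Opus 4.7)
The plan is to reduce Lemma~\ref{l:Degreebound} to the claim that the set
\[
\mathcal S := \{\inn_{\mathbf w}(I) : \mathbf w \in \mathbb R^{n+1}\}
\]
of distinct initial ideals of $I$, viewed as homogeneous tropical ideals in $\mathbb B[x_0,\ldots,x_n]$, is finite. Granting this, I would set
\[
D := \max\bigl\{\min\{d \geq 0 : J_d \neq J'_d\} : J \neq J' \in \mathcal S\bigr\};
\]
this maximum is finite, and by construction any two vectors $\mathbf w, \mathbf w' \in \mathbb R^{n+1}$ for which $\inn_{\mathbf w}(I)$ and $\inn_{\mathbf w'}(I)$ agree in all degrees $\leq D$ cannot give rise to distinct elements of $\mathcal S$.

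To prove that $\mathcal S$ is finite I would argue by contradiction. Suppose $\mathcal S$ is infinite. By Lemma~\ref{l:complexindegreed}, the set $\{(\inn_{\mathbf w}(I))_d : \mathbf w \in \mathbb R^{n+1}\}$ is finite for each $d \geq 0$, so a diagonal pigeonhole argument produces an infinite sequence of pairwise distinct $J^{(1)}, J^{(2)}, \ldots \in \mathcal S$ and matroidal graded pieces $K_0, K_1, \ldots$ such that for every $d$, eventually $(J^{(i)})_d = K_d$. The direct sum $K := \bigoplus_d K_d$ is then a homogeneous tropical ideal in $\mathbb B[x_0,\ldots,x_n]$: each $K_d$ is the set of vectors of a valuated matroid because it coincides with $(J^{(i)})_d$ for $i$ large, and the containment $x_\ell K_d \subseteq K_{d+1}$ is inherited from the $J^{(i)}$. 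By Corollary~\ref{cor:samehilbert}, $H_K = H_I$.

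Next, I would introduce for each $D \geq 0$ the smallest tropical ideal $L_D$ in $\mathbb B[x_0,\ldots,x_n]$ containing the graded pieces $K_0, \ldots, K_D$. Since $K$ is a tropical ideal containing these pieces, $L_D \subseteq K$, and moreover $(L_D)_d = K_d$ for $d \leq D$. The chain $L_0 \subseteq L_1 \subseteq \cdots$ is ascending, so by Theorem~\ref{p:ascendingchain} it stabilizes at some $L_{D_0}$; the stabilization, together with $(L_d)_d = K_d$ for every $d$, propagates the equality to all degrees and gives $L_{D_0} = K$. For $i$ large enough that $(J^{(i)})_d = K_d$ for every $d \leq D_0$, the tropical ideal $J^{(i)}$ contains $K_0, \ldots, K_{D_0}$, hence $J^{(i)} \supseteq L_{D_0} = K$. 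Since $J^{(i)}$ and $K$ share the Hilbert function $H_I$, the first assertion of Theorem~\ref{p:ascendingchain} forces $J^{(i)} = K$, contradicting the distinctness of the $J^{(i)}$.

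The main obstacle will be justifying the existence of the minimal tropical ideal $L_D$ containing prescribed graded pieces, since arbitrary intersections of tropical ideals are not evidently tropical (intersections of tropical linear spaces need not be tropical linear spaces). I expect to handle this by working entirely inside the ambient tropical ideal $K$: one constructs $L_D$ graded piece by graded piece as a tropical sub-ideal of $K$, using that each tropical ideal containing $K_0, \ldots, K_D$ must contain the matroidal structure on $K_d$ for $d > D$ forced by the ideal axioms applied within the already-fixed valuated matroid $K_d$. If this construction proves delicate, an equivalent route is to refactor the final step to avoid $L_D$ altogether, by running the ACC argument directly on a chain of sub-tropical-ideals of $K$ whose existence is unconditional and which must eventually agree with $K$ in every graded piece.
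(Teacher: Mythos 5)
Your reduction of the lemma to the finiteness of $\mathcal S$ is sound, and the diagonal argument producing a limit tropical ideal $K$ with $H_K = H_I$ works. However, the crux of your argument — the existence of a smallest tropical ideal $L_D$ containing the graded pieces $K_0, \dotsc, K_D$ — is a genuine gap, and neither of your proposed fixes closes it. Over $\mathbb B$ the degree-$d$ part of a tropical ideal is the set of cycles of a matroid on $\mon_d$, and there is no reason for a unique smallest matroid to exist whose cycle space contains a prescribed family of subsets (here $\{x_\ell \cdot C : C \text{ a circuit of } M_D(K)\}$): two incomparable matroids can both work, with no common refinement in the cycle-containment order. The intersection of all tropical ideals containing $K_{\leq D}$ is therefore not evidently a tropical ideal, and since the plain ideal generated by $K_{\leq D}$ in $\mathbb B[x_0,\dotsc,x_n]$ need not be tropical (and $\mathbb B[x_0,\dotsc,x_n]$ is not Noetherian), you cannot invoke ACC on that chain either. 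Your fallback of ``working inside $K$'' has an additional flaw for the intended use: to conclude $J^{(i)} \supseteq L_{D_0}$ you need $L_{D_0}$ minimal among \emph{all} tropical ideals containing $K_{\leq D_0}$, not merely among sub-ideals of $K$, since $J^{(i)}$ is not known to be comparable to $K$. The refactored route you sketch is not specified concretely enough to evaluate.

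By contrast, the paper sidesteps any construction of auxiliary tropical ideals. It takes $D$ to be the maximal degree of a generator of any monomial ideal in $\mathbb B[x_0,\dotsc,x_n]$ with Hilbert function $H_I$ (finite by the classical antichain result for monomial ideals), and shows that the degree-$D$ Gr\"obner complex $\Sigma(I_D)_\emptyset$ refines $\Sigma(I_e)_\emptyset$ for every $e>D$: a generic $\mathbf{v}$ in a maximal cell has $\inn_{\mathbf v}(I)$ a monomial ideal with Hilbert function $H_I$, hence generated in degree $\leq D$, so $\inn_{\mathbf v}(I)_e$ is determined by $\inn_{\mathbf v}(I)_D$. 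Your approach, if the gap could be filled, would instead deduce finiteness of $\mathcal S$ from Theorem~\ref{p:ascendingchain}; the paper instead deduces both finiteness of $\mathcal S$ and the lemma from the same monomial-ideal bound that already underlies that theorem's proof. You may find it cleaner to transplant that bound directly rather than to route through ACC.
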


\begin{proof}
Let $H_I$ be the Hilbert function of $I$.  There is only a finite
number of monomial ideals in $\Rbar[x_0,\dots,x_n]$ with Hilbert
function $H_I$, as the same is true in the polynomial ring
$K[x_0,\dots,x_n]$ for any field $K$; see, for example, \cite{MaclaganAntichains}*{Corollary 2.2}.
Let $D$ be the maximum degree of any generator of a monomial ideal
with Hilbert function $H_I$. 

Fix $e>D$, and let $\Sigma(I_e)_{\emptyset}$ be the polyhedral complex
whose existence is guaranteed by Lemma~\ref{l:complexindegreed}.  We
will show that $\Sigma(I_e)_{\emptyset}$ is refined by the polyhedral
complex $\Sigma(I_D)_{\emptyset}$.  For this, it suffices to show that
the relative interior of any maximal face in $\Sigma(I_D)_{\emptyset}$
is contained in a maximal face of $\Sigma(I_e)_{\emptyset}$. For
generic $\mathbf{v}$ in the relative interior of a maximal face in
 $\Sigma(I_e)_{\emptyset}$,
Lemma~\ref{l:monomial} implies that the initial ideal
$\inn_{\mathbf{v}}(I)$ is a monomial ideal, which has Hilbert function
$H_I$.  Thus for all $\mathbf{v}$ in this cell,
$\inn_{\mathbf{v}}(I)_e$ is generated as a $\mathbb B$-semimodule by
monomials.
Our choice of $D$ implies that 
these monomials are all
divisible by some monomial in $\inn_{\mathbf{v}}(I)_D$.  This means that 
for generic $\mathbf{v}$ and $\mathbf{v}'$, if
$\inn_{\mathbf{v}}(I)_D = \inn_{\mathbf{v}'}(I)_D$ then
$\inn_{\mathbf{v}}(I)_e = \inn_{\mathbf{v}'}(I)_e$, which proves the desired 
containment.

To conclude, fix now specific $\mathbf{w}, \mathbf{w}' \in 
\Rbar^{n+1}_{\emptyset} \cong \mathbb R^{n+1}$, and suppose that
$\inn_{\mathbf{w}}(I)_e \neq \inn_{\mathbf{w}'}(I)_e$ for some $e>D$.
This means that $\mathbf{w}$ and $\mathbf{w}'$ live in different cells
of the polyhedral complex $\Sigma(I_e)_{\emptyset}$.  As 
$\Sigma(I_e)_{\emptyset}$ is refined
by $\Sigma(I_D)_{\emptyset}$, this implies that $\inn_{\mathbf{w}}(I)_D \neq 
\inn_{\mathbf{w}'}(I)_D$, and thus the proposition follows.
\end{proof}

The following theorem is a strengthening of
Theorem~\ref{t:Grobnercomplex}. It shows that
the Gr\"obner complex of a homogeneous tropical ideal is,
in each $N(\sigma) \cong \mathbb R^{n+1-|\sigma|}$, dual to a
regular subdivision of a convex polytope.

\begin{theorem}({Gr\"obner complexes are normal complexes.})
\label{t:statepolytope}
Let $I \subseteq \Rbar[x_0,\dots,x_n]$ be a homogeneous tropical ideal. For each
$\sigma \subseteq \{0,\dots,n\}$ there is a tropical polynomial
$F^{\sigma} \in \Rbar[x_i : i \not \in \sigma]$ such that for
$\mathbf{w}, \mathbf{w}' \in \Rbar_{\sigma}^{n+1}$ we have
$\inn_{\mathbf{w}}(I)=\inn_{\mathbf{w}'}(I)$ if and only if
$\inn_{\mathbf{w}}(F^{\sigma})=\inn_{\mathbf{w}'}(F^{\sigma})$.
Moreover, the union of the normal complexes $\mathcal N(F^{\sigma}) \subseteq
\Rbar^{n+1}_{\sigma} \cong \mathbb R^{n+1-|\sigma|}$ forms an $\mathbb R$-rational polyhedral complex $\Sigma(I)$ in 
$\Rbar^{n+1}$.
\end{theorem}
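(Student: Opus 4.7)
The plan is to construct $F^\sigma$ as a tropical product of the degree-$d$ polynomials $F_d^\sigma$ already produced in the proof of Lemma~\ref{l:complexindegreed}, taken over a range of degrees wide enough (via Lemma~\ref{l:Degreebound}) to detect the entire initial ideal. For each of the finitely many subsets $\sigma \subseteq \{0,\dotsc,n\}$, apply Lemma~\ref{l:Degreebound} to the homogeneous tropical ideal $I^\sigma \subseteq \Rbar[x_i : i \notin \sigma]$ appearing in the proof of Theorem~\ref{t:initial}, and let $D$ be the maximum of the resulting degree thresholds. Then set
\[ F^\sigma \,:=\, F_0^\sigma \ttimes F_1^\sigma \ttimes \dotsb \ttimes F_D^\sigma \,\in\, \Rbar[x_i : i \notin \sigma]. \]

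To establish the first conclusion I would chain the following equivalences, valid for $\mathbf w, \mathbf w' \in \Rbar^{n+1}_\sigma$. Since $\inn_{\mathbf w}(f \ttimes g) = \inn_{\mathbf w}(f) \ttimes \inn_{\mathbf w}(g)$, and the inner normal complex of a Minkowski sum of lifted Newton polytopes is the common refinement of the individual normal complexes, the normal complex $\mathcal N(F^\sigma)$ coincides with the common refinement of the $\mathcal N(F_d^\sigma)$ for $d \leq D$. Hence $\inn_{\mathbf w}(F^\sigma) = \inn_{\mathbf w'}(F^\sigma)$ is equivalent to $\mathbf w$ and $\mathbf w'$ lying in the same cell of every $\mathcal N(F_d^\sigma)$, which by Lemma~\ref{l:complexindegreed} is equivalent to $\inn_{\mathbf w}(I)_d = \inn_{\mathbf w'}(I)_d$ for all $d \leq D$. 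By the choice of $D$ together with Theorem~\ref{t:initial}, which identifies $\inn_{\mathbf w}(I)_d$ with $\inn_{\hat{\mathbf w}}(I^\sigma)_d$ up to a coloop extension by $\mon^\sigma_d$, this last condition is equivalent to $\inn_{\mathbf w}(I) = \inn_{\mathbf w'}(I)$.

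For the moreover statement, each $\mathcal N(F^\sigma)$ is an $\mathbb R$-rational polyhedral complex covering $\Rbar^{n+1}_\sigma \cong \mathbb R^{n+1-|\sigma|}$, so only the boundary condition of Definition~\ref{d:polyhedralcomplexRbar} remains to be verified. This is inherited from Lemma~\ref{l:complexindegreed}, which already showed that for $\tau \subseteq \sigma$ and any polyhedron $P \in \mathcal N(F_d^\tau)$ the closure of $P$ in $\Rbar^{n+1}$ meets $\Rbar^{n+1}_\sigma$ inside a single cell of $\mathcal N(F_d^\sigma)$. Intersecting these containments over $d \leq D$ passes the property to the cells of the common refinement $\mathcal N(F^\sigma)$, so the union $\Sigma(I) := \bigcup_\sigma \mathcal N(F^\sigma)$ is an $\mathbb R$-rational polyhedral complex in $\Rbar^{n+1}$. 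The main obstacle I anticipate is the uniformity of the degree bound across all $\sigma$: the ideals $I^\sigma$ typically have different Hilbert functions from $I$ itself, since the matroid ranks shift under contraction by $\mon^\sigma_d$, but the finiteness of the collection of subsets $\sigma$ allows the maximum of the individual thresholds from Lemma~\ref{l:Degreebound} to serve as a single bound.
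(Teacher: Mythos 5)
Your proposal matches the paper's proof almost step for step: the paper likewise applies Lemma~\ref{l:Degreebound} to each contracted ideal $I^\sigma$, takes $D := \max_\sigma D_\sigma$, sets $F^\sigma := \prod_{d \leq D} F_d^\sigma$, chains the same equivalences through the $\mathcal N(F_d^\sigma)$ via Lemma~\ref{l:complexindegreed} and Theorem~\ref{t:initial}, and concludes the complex structure by taking common refinements of the polyhedral complexes of Lemma~\ref{l:complexindegreed}. The approach is the same; your closing concern about uniformity across $\sigma$ is exactly handled by the finite maximum, just as in the paper.
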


\begin{proof}
For any $\sigma \subseteq \{0,\dots,n\}$ consider the ideal
$I^{\sigma} := \langle f^{\sigma} : f \in I \rangle \subseteq
\Rbar[x_i: i \notin \sigma]$ discussed in the proof of
Theorem~\ref{t:initial}.  Let $D_{\sigma}\geq 0$ be the degree given
by Lemma~\ref{l:Degreebound} for the ideal $I^{\sigma}$, and let $D:=
\max_{\sigma} D_{\sigma}$. For any $d \leq D$, let $F_d^{\sigma}$ be
the tropical polynomial given in
\eqref{eqtn:GrobnerComplexHypersurface}, and let $F^{\sigma} :=
\prod_{d \leq D} F_d^{\sigma}$.  Two vectors $\mathbf{w}, \mathbf{w}'
\in \Rbar^{n+1}_{\sigma}$ satisfy
$\inn_{\mathbf{w}}(F^{\sigma})=\inn_{\mathbf{w}'}(F^{\sigma})$ if and
only if
$\inn_{\mathbf{w}}(F_d^{\sigma})=\inn_{\mathbf{w}'}(F_d^{\sigma})$ for
all $d \leq D$. The proof of Lemma~\ref{l:complexindegreed} shows that
this is the same as
$\inn_{\overline{\mathbf{w}}}(I^\sigma)_d=\inn_{\overline{\mathbf{w}}'}(I^\sigma)_d$
for all $d \leq D$, where $\overline{\mathbf{w}}$ and
$\overline{\mathbf{w}}'$ are the projections of $\mathbf{w}$ and
$\mathbf{w}'$ to coordinates not in $\sigma$.  By
Lemma~\ref{l:Degreebound}, this is equivalent to
$\inn_{\overline{\mathbf{w}}}(I^\sigma)=\inn_{\overline{\mathbf{w}}'}(I^\sigma)$,
which is in turn the same as $\inn_{\mathbf{w}}(I) =
\inn_{\mathbf{w}'}(I)$.

Now, for any $\sigma \subseteq \{0, \dotsc, n\}$, 
the normal complex $\mathcal N(F^{\sigma}) \subseteq
\Rbar^{n+1}_{\sigma} \cong \mathbb R^{n+1-|\sigma|}$ is the $\mathbb R$-rational polyhedral complex 
obtained as the common refinement of the normal complexes 
$\mathcal N(F_d^{\sigma})$ with $d \leq D$. In 
Lemma~\ref{l:complexindegreed} we showed that for any fixed 
$d$, the complexes $\mathcal N(F_d^{\sigma})$ with $\sigma \subseteq \{0, \dotsc, 
n\}$ form a polyhedral complex in $\Rbar^{n+1}$, and thus the same is true for 
their common refinements $\mathcal N(F^{\sigma})$.
\end{proof}

\begin{example} \label{e:statepoly}
Let $J := \langle x_1-x_2, x_3-x_4 \rangle
\subseteq \mathbb C[x_1,x_2,x_3,x_4]$, and $I :=\trop(J) \subseteq
\Rbar[x_1,x_2,x_3,x_4]$. In this case, since $J$ is a linear ideal, the degree $D$ given in the
proof of Theorem \ref{t:statepolytope} is equal to $1$. We have
$F^{\emptyset} = x_1 x_3\tplus x_1  x_4 \tplus x_2  x_3 \tplus x_2  x_4$, and
$F^{\{1,2\}}=x_3 \tplus x_4$. 
Note that the Gr\"obner complex of $I$ in $\Rbar^4_{\{1,2\}}$ is not
equal to the normal complex of $F^{\emptyset}|_{x_1=x_2=\infty} =
\infty$; the polynomial $F^{\{1,2\}}$ is needed to get the comparison
between $w_3$ and $w_4$ in this stratum.
\end{example}

We now use the existence of Gr\"obner complexes to show that the
variety of a tropical ideal is an $\mathbb R$-rational polyhedral
complex.  We first prove that tropical ideals always admit finite tropical bases.

For a fan $\Sigma \subseteq N_{\mathbb R}$ and a homogeneous $f \in
\Cox(\trop(X_{\Sigma}))$,
 we set $$V_{\Sigma}(f) := (V(f) \setminus
V(\trop(B_{\Sigma})))/\ker(Q^T) \subseteq \trop(X_{\Sigma}).$$

\begin{definition}({\em Tropical bases.})
Fix a fan $\Sigma \subseteq N_{\mathbb R}$, and let $I \subseteq
\Cox(\trop(X_{\Sigma}))$ be a homogeneous ideal. A set $\{f_1, f_2,
\dotsc, f_l\} \subseteq I$ of homogeneous polynomials
forms a {\em tropical basis}
for $I$ if
 \[V_\Sigma(I) = V_\Sigma(f_1) \cap V_\Sigma(f_2) \cap \dotsb \cap 
V_\Sigma(f_l).\]
\end{definition}

\begin{theorem}\label{t:TropicalBasis}
 Any locally tropical ideal $I \subseteq \Cox(\trop(X_{\Sigma}))$ has a 
finite tropical basis.
\end{theorem}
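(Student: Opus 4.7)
The plan is to combine the Gröbner complex from Theorem \ref{t:Grobnercomplex} with the monomial characterization of the variety in Proposition \ref{p:varietynonmonomial}, producing a finite tropical basis stratum by stratum and then taking the union over the finitely many cones of $\Sigma$.

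Concretely, for each cone $\sigma \in \Sigma$ I would construct a finite set $B_{\sigma} \subseteq I$ of homogeneous polynomials whose common variety on the open stratum $U_{\sigma}^{\trop}$ coincides with $V_{\Sigma}(I) \cap U_{\sigma}^{\trop}$. To do so, fix $\sigma$ and consider $I_{\sigma} := (IS_{\mathbf{x}^{\hat{\sigma}}})_{\mathbf{0}} \subseteq \Rbar[\sigma^{\vee} \cap M]$, a tropical ideal by hypothesis. After embedding $\sigma^{\vee} \cap M$ into some $\mathbb{N}^s$ and homogenizing as in the paragraph after Example \ref{e:affineidealofpoint}, $I_{\sigma}$ gives rise to a homogeneous tropical ideal $\widetilde{I_{\sigma}}$ in a standard-graded polynomial semiring $\Rbar[y_0, \dots, y_N]$. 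Theorem \ref{t:Grobnercomplex} then furnishes a finite Gröbner complex $\Sigma(\widetilde{I_{\sigma}}) \subseteq \Rbar^{N+1}$ whose open cells parameterize distinct initial ideals.

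For each open cell $C$ of $\Sigma(\widetilde{I_{\sigma}})$ on which $\inn_{\mathbf{w}}(\widetilde{I_{\sigma}})$ contains a monomial (for $\mathbf{w}$ in the relative interior of $C$), I would pick one $f_C \in \widetilde{I_{\sigma}}$ whose initial form at $\mathbf{w}$ is precisely that monomial; this selection is possible by the very definition of the initial ideal and the absence of cancellation in $\mathbb{B}[y_0, \dots, y_N]$. Dehomogenizing and multiplying by a suitable power of $\mathbf{x}^{\hat{\sigma}}$ to clear denominators lifts each $f_C$ into $I$, yielding a finite candidate $B_{\sigma}$. The verification runs as follows: the inclusion $V_{\Sigma}(I) \subseteq \bigcap_{f \in B_{\sigma}} V_{\Sigma}(f)$ is automatic since each $f_C$ lies in $I$. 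Conversely, suppose $\mathbf{w} \in U_{\sigma}^{\trop}$ lies outside $V_{\Sigma}(I)$; then Proposition \ref{p:varietynonmonomial} applied to $I_{\sigma}$ yields a monomial in $\inn_{\mathbf{w}}(I_{\sigma})$, placing $\mathbf{w}$ in some cell $C$ of the Gröbner complex for which an $f_C$ was chosen, and the initial form of that $f_C$ at $\mathbf{w}$ is then a monomial, contradicting $\mathbf{w} \in V_{\Sigma}(f_C)$.

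The main obstacle is the bookkeeping along the homogenize-Gröbner-dehomogenize-lift chain: one must verify that initial ideals of $\widetilde{I_{\sigma}}$ with respect to weights having infinite coordinates faithfully encode the initial-ideal structure of $I_{\sigma}$ back in the Cox semiring, and that multiplying lifts by powers of $\mathbf{x}^{\hat{\sigma}}$ does not alter their varieties on $U_{\sigma}^{\trop}$, where $\mathbf{x}^{\hat{\sigma}}$ is a unit. Once these compatibility checks are in place, the finiteness of each Gröbner complex $\Sigma(\widetilde{I_{\sigma}})$ together with the finiteness of the fan $\Sigma$ yields the required finite tropical basis $\bigcup_{\sigma \in \Sigma} B_{\sigma}$ for $I$.
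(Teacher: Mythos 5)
There is a genuine gap at the heart of the argument, at the step where you pick $f_C$. For a cell $C$ of the Gr\"obner complex, the defining property is that the \emph{initial ideal} $\inn_{\mathbf{w}}(\widetilde{I_{\sigma}})$ is constant as $\mathbf{w}$ ranges over $C$; it does \emph{not} follow that the initial form $\inn_{\mathbf{w}}(f)$ of a fixed polynomial $f$ is constant on $C$. You choose $f_C$ so that $\inn_{\mathbf{w}_0}(f_C)$ is a monomial for one $\mathbf{w}_0$ in the relative interior of $C$, but in the verification you need $\inn_{\mathbf{w}}(f_C)$ to be a monomial for \emph{every} $\mathbf{w}$ in $C$, and this is not automatic. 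For example, if the cell is cut out by the inequalities making $\{c,d\}$ the unique minimizing basis, a circuit with support $\{a,b,c\}$ may have initial form $\{a\}$ at one point of the cell and $\{a,b\}$ at another (where $w_a = w_b$), since nothing in the cell's defining inequalities compares $w_a$ and $w_b$. So "the initial form of that $f_C$ at $\mathbf{w}$ is then a monomial" is unjustified as stated.

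This is precisely what the paper's proof of Theorem~\ref{t:TropicalBasis} works to ensure: rather than picking an arbitrary polynomial with the correct initial form at one point, it takes $f$ to be the fundamental circuit $H(B, \mathbf{x}^{\mathbf{u}})$ where $B$ is a basis of the \emph{initial} matroid $M_d(\inn_{\mathbf{w}}(I))$. Because then $\supp(f) \subseteq B \cup \{\mathbf{x}^{\mathbf{u}}\}$, Lemma~\ref{lem:initial} and Theorem~\ref{t:initial} force $\inn_{\mathbf{w}}(f)$ to be a cycle of the (constant) initial matroid contained in $B \cup \{\mathbf{x}^{\mathbf{u}}\}$; using that $\mathbf{x}^{\mathbf{u}}$ is a loop and $B$ is independent, the only possibility is $\inn_{\mathbf{w}}(f) = \mathbf{x}^{\mathbf{u}}$ \emph{for every} $\mathbf{w} \in C$. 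Your outer scaffolding (stratifying by cones, homogenizing, lifting by powers of $\mathbf{x}^{\hat{\sigma}}$, and taking the union over finitely many cones and finitely many cells) follows the same route as the paper, but without the matroid-theoretic selection of $f_C$ the argument does not close.
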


\begin{proof}
We first show this in the case where $\Sigma$ is the fan of $\mathbb P^n$, 
so the Cox semiring is $S = \Rbar[x_0,\dots,x_n]$, 
with the stronger assumption that $I \subseteq S$ is a homogeneous tropical ideal in the
sense of Definition~\ref{d:tropicalidealprojective}.
By Proposition~\ref{p:varietynonmonomial}, the variety $V(I) \subseteq
\Rbar^{n+1}$ is the union of those cells of $\Sigma(I)$
corresponding to initial ideals not containing
a monomial.  Suppose that $C$ is a cell of $\Sigma(I)$ 
corresponding to an initial ideal $\inn_{\mathbf{w}}(I)$ that contains
a monomial $\mathbf{x}^{\mathbf{u}}$ of degree $d$.  We will construct a
homogeneous polynomial $f \in I$ with $V(f) \cap C = \emptyset$.
If $C \subseteq \Rbar^{n+1}_{\tau}$ for some nonempty $\tau
\subseteq \{0,\dots,n \}$, we can replace $I$ by $I^\tau := \langle
g^\tau : g \in I \rangle$ as in the proof of Theorem~\ref{t:initial};
if $V(f^\tau) \subseteq \Rbar^{n+1}_{\tau}$ does not intersect
$C$ then $V(f) \cap C = \emptyset$.  We may thus assume
that $C \subseteq \Rbar^{n+1}_{\emptyset} \cong \mathbb R^{n+1}$.

By definition, the initial ideal $\inn_{\mathbf{w}}(I)$ is the same
for any $\mathbf{w} \in C$.  Fix a basis $B$ of
the initial matroid $M_d(\inn_{\mathbf{w}}(I))$.  By Lemma~\ref{lem:initial} and
Theorem~\ref{t:initial}, 
$B$ is a basis of $\underline{\M_d(I)}$ as well.  Since
$\mathbf{x}^{\mathbf{u}}$ is a loop in $M_d(\inn_{\mathbf{w}}(I))$,
we have $\mathbf{x}^{\mathbf{u}} \not \in B$. 
Let $f \in I$ be the fundamental circuit
$H(B,\mathbf{x}^{\mathbf{u}})$, as in \eqref{eqtn:fundamentalcircuit}.

We claim that for any $\mathbf{w} \in C$, the initial form
$\inn_{\mathbf{w}}(f)$ is equal to $\mathbf{x}^{\mathbf{u}}$.  Suppose
not.  By Lemma~\ref{lem:initial} and Theorem~\ref{t:initial}, the
support $D$ of $\inn_{\mathbf{w}}(f)$ is a cycle in
$M_d(\inn_{\mathbf{w}}(I))$.  Note that $\mathbf{x}^{\mathbf{u}}
\in D$, as otherwise $D$ would be completely contained in the basis $B$,
and $B$ would contain a circuit of $M_d(\inn_{\mathbf{w}}(I))$. 
Since $\mathbf{x}^{\mathbf{u}}$ is a loop in
$M_d(\inn_{\mathbf{w}}(I))$, 
the vector elimination axiom in $M_d(\inn_{\mathbf{w}}(I))$ 
implies that the subset $D \setminus
\{\mathbf{x}^{\mathbf{u}}\}$ must also be a cycle in
$M_d(\inn_{\mathbf{w}}(I))$.  But then $D \setminus
\{\mathbf{x}^{\mathbf{u}}\}$ is a nonempty cycle of
$M_d(\inn_{\mathbf{w}}(I))$ contained in the basis $B$, which is a
contradiction.

We have thus shown that for any cell $C \in \Sigma(I)$ not in the
variety $V(I)$ there is a homogeneous polynomial $f \in I$ such that
$V(f) \cap C = \emptyset$. A tropical basis for $I$ can then be
obtained by taking the collection of all these polynomials
corresponding to the finitely many cells $C \in \Sigma(I)$ that
are not in $V(I)$.

We now show that if $I \subseteq \Rbar[x_1^{\pm 1},\dots,x_n^{\pm 1}]$
is a tropical ideal then there exist $f_1,\dotsc,f_l \in I$ such that
$V(f_1)\cap \dotsb \cap V(f_l) = V(I) \subseteq \mathbb R^n$. We first claim that the
homogenization $\tilde{I} \subseteq \Rbar[x_0,\dots,x_n]$ of 
$I \cap \Rbar[x_1,\dots,x_n]$ is a homogeneous tropical ideal.  
Indeed, note that for any collection of monomials $E$ in
 $\Rbar[x_1,\dots,x_n]$, the set of polynomials in $I \cap
 \Rbar[x_1,\dots,x_n]$ with support in $E$ is the same as the set of
 polynomials in $I$ with support in $E$, so $I \cap
 \Rbar[x_1,\dots,x_n]$ is a tropical ideal. Our claim follows since
 the homogenization of a tropical ideal is a homogeneous tropical
 ideal.  
Now, by the first part of the proof, there are homogeneous
polynomials $f_1,\dots, f_l \in \tilde{I}$ with
$\trop(\tilde{I}) = V(f_1)\cap \dots \cap V(f_l) \subseteq
\Rbar^{n+1}$.  Moreover, for any homogeneous $f \in \Rbar[x_0,\dots,x_n]$ we have
$V(f|_{x_0=0}) = V(f) \cap \{w_0 = 0 \}$.  Thus, under the
identification of $\mathbb R^{n}$ with $\{ \mathbf{w} \in \mathbb
R^{n+1} : w_0=0 \}$, we have $V(I) \subseteq \mathbb R^n$ equal to $V(\tilde{I}) \cap
\{ \mathbf{w} \in \mathbb R^{n+1} :w_0=0 \}$, and so $V(I) =
V(f_1|_{x_0=0}) \cap \dots \cap V(f_l|_{x_0=0})$.

We finally consider the general case that $I$ is a locally tropical ideal
in a Cox semiring $S = \Cox(\trop(X_{\Sigma})) =
\Rbar[x_1,\dots,x_m]$.  Fix $\sigma \in \Sigma$.  Then
$J:=(IS_{\mathbf{x}^{\hat{\sigma}}}/\langle x_i \sim \infty : i \in
\sigma \rangle)_{\mathbf{0}}$ is a tropical ideal in
$(S_{\mathbf{x}^{\hat{\sigma}}}/\langle x_i \sim \infty : i \in \sigma
\rangle)_{\mathbf{0}} \cong \Rbar[y_1^{\pm 1},\dots,y_p^{\pm 1}]$, where
$p = n - \dim(\sigma)$.  By the previous paragraph we can find
$f_1,\dots,f_l \in J$ such that
$V(J) \subseteq N(\sigma) \cong \mathbb R^p$ is equal to the intersection
of the hypersurfaces $V(f_1),\dots,V(f_l)$.  For each $f_i$ there is
$g_i = f_i+f_i' \in IS_{\mathbf{x}^{\hat{\sigma}}}$
with each monomial in $f_i'$ divisible by some $x_i$
with $i \in \sigma$.  There is $e \geq 0$ such that $h_i :=
(\mathbf{x}^{\hat{\sigma}})^e \ttimes g_i \in I$.  By construction we have
$V_{\sigma}(h_i) = V(f_i)$, and thus taking the collection of all $h_i$ as
$\sigma$ varies over all cones of $\Sigma$ we get a tropical basis for
$I$.
\end{proof}

We now prove the main result of this section. 
In order to state it, we extend the definition of a polyhedral complex to
the case where the ambient space is any tropical toric variety.

\begin{definition}({\em Polyhedral complexes in tropical toric varieties.})
Fix a fan $\Sigma \subseteq N_{\mathbb R} \cong \mathbb R^n$.  A {\em
  polyhedral complex} $\Delta$ in $\trop(X_{\Sigma})$ consists of a
polyhedral complex $\Delta_{\sigma}$ in each orbit $N(\sigma)
\cong \mathbb R^{n-\dim(\sigma)}$, with
the additional requirement that if $\tau$ is a face of $\sigma$ and
$P$ is a polyhedron in $\Delta_{\tau}$, then the closure of $P$ in
$\trop(X_\Sigma)$ intersected with $N(\sigma)$ is contained in a
polyhedron of $\Delta_{\sigma}$.  Note that, in particular, the
support of any polyhedral complex is a closed subset of
$\trop(X_\Sigma)$.  The polyhedral complex $\Delta$ is {\em $\mathbb
  R$-rational} if for all $\sigma \in \Sigma$, any polyhedron $P \in
\Delta_{\sigma}$ has the form $P= \{\mathbf{x} \in \mathbb
R^{n-\dim(\sigma)} : A\mathbf x \leq \mathbf{b} \}$ with $\mathbf{b}
\in \mathbb R^l$ and $A \in \mathbb Q^{l \times (n-\dim(\sigma))}$ for
some $l\geq 0$.
\end{definition}

\begin{theorem}({Tropical varieties are polyhedral complexes.})
 \label{t:polyhedralcomplex}
Fix a fan $\Sigma \subseteq N_{\mathbb R}$, and let $I \subseteq
\Cox(\trop(X_{\Sigma}))$ be a locally tropical ideal. The
variety $V_{\Sigma}(I) \subseteq
\trop(X_{\Sigma})$ is the support of a finite $\mathbb R$-rational
polyhedral complex in $\trop(X_{\Sigma})$.
\end{theorem}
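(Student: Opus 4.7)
The plan is to construct the polyhedral complex stratum by stratum, using the Gr\"obner theory developed in Section \ref{s:5}, and then to verify the combinatorial compatibility required by the definition of a polyhedral complex in a tropical toric variety.

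First I would work locally. Fix $\sigma \in \Sigma$ and set $J_\sigma := (IS_{\mathbf{x}^{\hat\sigma}}/\langle x_i \sim \infty : i \in \sigma\rangle)_{\mathbf{0}}$. This is a tropical ideal in the Laurent polynomial semiring $\Rbar[y_1^{\pm 1},\dots,y_p^{\pm 1}]$ of $N(\sigma)$, where $p = n - \dim(\sigma)$. As noted in the proof of Theorem \ref{t:TropicalBasis}, the affine restriction $J_\sigma \cap \Rbar[y_1,\dots,y_p]$ is a tropical ideal, and its homogenization $\tilde{J}_\sigma \subseteq \Rbar[y_0,\dots,y_p]$ is a homogeneous tropical ideal. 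Theorem \ref{t:Grobnercomplex} then yields a finite $\mathbb R$-rational Gr\"obner complex $\Sigma(\tilde{J}_\sigma)$ whose support is $\Rbar^{p+1}$, and Proposition \ref{p:varietynonmonomial} identifies $V(\tilde{J}_\sigma)$ as the union of those Gr\"obner cells on which the initial ideal contains no monomial. Intersecting this subcomplex with the affine hyperplane $\{y_0 = 0\} \cong N(\sigma)$ produces a finite $\mathbb R$-rational polyhedral complex $\Delta_\sigma \subseteq N(\sigma)$ with support $V_\sigma(I)$.

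Second, I would verify the gluing axiom: for every pair $\tau \preceq \sigma$ in $\Sigma$ and every polyhedron $P \in \Delta_\tau$, the closure of $P$ in $\trop(X_\Sigma)$ intersected with $N(\sigma)$ must lie inside a single polyhedron of $\Delta_\sigma$. The guiding principle is the limiting argument carried out at the end of the proof of Lemma \ref{l:complexindegreed}: as one lets the coordinates indexed by $\sigma \setminus \tau$ tend to $\infty$, the bases of the initial matroid in the larger stratum concentrate on those monomials containing a basis of $\mon^{\sigma \setminus \tau}_d$, and the resulting Gr\"obner data on $N(\sigma)$ is determined by the Gr\"obner cell of $\tilde{J}_\tau$ containing the approximating sequence. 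Translating this between the Cox semiring localizations at $\mathbf{x}^{\hat\tau}$ and $\mathbf{x}^{\hat\sigma}$ via the natural inclusion of semigroup semirings $\Rbar[\sigma^\vee \cap M] \hookrightarrow \Rbar[\tau^\vee \cap M]$ gives the required containment.

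Once both ingredients are in place, the disjoint union $\Delta := \coprod_{\sigma \in \Sigma} \Delta_\sigma$ is an $\mathbb R$-rational polyhedral complex in $\trop(X_\Sigma)$, finite because $\Sigma$ has finitely many cones and each $\Delta_\sigma$ is finite, and by construction its support equals $V_\Sigma(I)$. The main obstacle is the gluing step, whose delicacy is already visible in Example \ref{e:normalcomplex}: the intersection of the closure of a cell in $\Delta_\tau$ with $N(\sigma)$ can be strictly contained in a cell of $\Delta_\sigma$, which is precisely why the relaxed notion of polyhedral complex in Definition \ref{d:polyhedralcomplexRbar} is needed and why the compatibility check cannot be reduced to a naive statement about boundaries of cells.
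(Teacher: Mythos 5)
Your approach is genuinely different from the paper's, and it has a real gap in the gluing step. The paper does \emph{not} build the complex locally in each orbit $N(\sigma)$: instead it first invokes Theorem~\ref{t:TropicalBasis} to produce a finite tropical basis $f_1,\dotsc,f_l$ for $I$, then shows that each hypersurface $V(\tilde f_i)\subseteq\Rbar^{m+1}$ is a polyhedral complex (applying the limiting argument of Lemma~\ref{l:complexindegreed} to the \emph{single} homogeneous ideal $\langle\tilde f_i\rangle$, where only the graded piece in degree $\deg \tilde f_i$ matters), and finally passes to $\trop(X_\Sigma)$ via Lemma~\ref{l:CoxConstructionVariety} by quotienting out $\ker(Q^T)$. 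The intersection of finitely many such complexes is again a complex, and the gluing across toric strata is inherited for free from the gluing already established in $\Rbar^{m+1}$.

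The gap in your proposal is exactly the step you flag as delicate. You construct $\Delta_\sigma$ from the Gr\"obner complex of $\tilde J_\sigma$, a homogeneous tropical ideal in $\Rbar[y_0,\dots,y_{p_\sigma}]$, and $\Delta_\tau$ from $\tilde J_\tau$ in $\Rbar[y_0,\dots,y_{p_\tau}]$. These are different ideals in different polynomial semirings of different sizes, with no given map between them. The limiting argument at the end of Lemma~\ref{l:complexindegreed} is about the initial ideals of one and the same homogeneous tropical ideal $I\subseteq\Rbar[x_0,\dots,x_n]$ as the weight vector degenerates between strata $\Rbar^{n+1}_\tau\to\Rbar^{n+1}_\sigma$ of the \emph{same} ambient $\Rbar^{n+1}$; it does not say anything about how $\Sigma(\tilde J_\tau)$ degenerates to $\Sigma(\tilde J_\sigma)$. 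The sentence ``Translating this between the Cox semiring localizations \dots gives the required containment'' is therefore an assertion, not an argument: $\tilde J_\sigma$ is built from $J_\sigma$ by first killing the variables $x_i$ with $i\in\sigma$, taking the degree-zero part of a localization, intersecting with a polynomial subring, and homogenizing in a fresh variable $y_0$, and none of these steps interacts in an obvious way with the analogous construction for $\tau$. One would need a new lemma relating the Gr\"obner cells of $\tilde J_\tau$ near the boundary stratum to those of $\tilde J_\sigma$, and that lemma is essentially the hard content of the theorem. Passing through the Cox semiring and reducing to hypersurfaces, as the paper does, is precisely the device that makes the gluing follow from the already-proved $\Rbar^{n+1}$ statement rather than requiring a fresh cross-stratum comparison.
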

\begin{proof}
By Theorem~\ref{t:TropicalBasis} we can find $f_1,\dots,f_l \in
\Cox(\trop(X_{\Sigma})) = \Rbar[x_1,\dots,x_m]$, 
homogeneous with respect to the grading on $\Cox(\trop(X_{\Sigma}))$,
such that $V_{\Sigma}(I) = V_{\Sigma}(f_1) \cap \dots \cap
V_{\Sigma}(f_l)$.  Since the intersection of two finite $\mathbb
R$-rational polyhedral complexes is a finite $\mathbb R$-rational
polyhedral complex, it suffices to show that $V_{\Sigma}(f_i)$ 
is an $\mathbb R$-rational polyhedral complex in $\trop(X_{\Sigma})$ for each $i$.  

We first prove that $V(f) \subseteq \Rbar^m$ is
a finite $\mathbb R$-rational polyhedral complex for any 
polynomial $f \in \Rbar[x_1,\dots,x_m]$.
Note that it suffices to show this for the
homogenization $\tilde{f}$ of $f$, as $V(f) \subseteq \Rbar^m$ equals
$V(\tilde{f}) \cap \{ w_0 = 0 \}$, under the identification of $\{
\mathbf{w} \in \Rbar^{m+1} : w_0 = 0 \}$ with $\Rbar^m$. 
Let $\hat{\mathcal N}(\tilde{f}) \subseteq \Rbar^{m+1}$ 
be the union over all $\sigma \subseteq \{0,\dotsc,m\}$ of the normal complexes
$\mathcal N(\tilde{f}^\sigma) \subseteq \Rbar^{m+1}_\sigma \cong \mathbb R^{m+1-|\sigma|}$, 
where $\tilde{f}^\sigma$ is obtained from $\tilde{f}$ by setting $x_i = \infty$ for all $i \in \sigma$. 
The variety $V(\tilde{f})$ is a closed union of cells in $\hat{\mathcal N}(\tilde{f})$.  
The proof that $V(\tilde{f})$ is actually a polyhedral complex proceeds as in 
the proof of Lemma~\ref{l:complexindegreed}, as this only requires
$I_d$ to determine a valuated matroid on $\mon_d$, which is the case
in degree $d=\deg(\tilde f)$ for the ideal generated by $\tilde{f}$.  

Finally, since each
$f_i$ is homogeneous with respect to the grading on
$\Cox(\trop(X_{\Sigma}))$, all $V(f_i)$ contain $\ker(Q^T)$ in their lineality spaces. As the quotient of an $\mathbb R$-rational
polyhedral complex by a rational subspace of its lineality space is
again an $\mathbb R$-rational polyhedral complex, the result follows
from Lemma~\ref{l:CoxConstructionVariety}.
\end{proof}

\begin{remark}
A corollary of Theorem~\ref{t:polyhedralcomplex} is that if
$X$ is a subvariety of the torus $T  \cong (K^*)^n$ determined by an
ideal $J \subseteq K[x_1^{\pm 1},\dots,x_n^{\pm 1}]$ then $\trop(X)$ is
the support of an $\mathbb R$-rational polyhedral complex in $\mathbb R^n$.
The proof given here is essentially a simpler version of 
the one given in \cite{TropicalBook}*{\S 2.5}. 
\end{remark}

\begin{remark}
It would be desirable to have an analogue of the full Structure
Theorem~\cite{TropicalBook}*{Theorem 3.3.5} for general tropical
ideals.  The Structure Theorem states that if a 
variety $X$ is irreducible then the polyhedral complex with support
$\trop(X)$ is pure of dimension $\dim(X)$, and carries natural
multiplicities that make it balanced.  A version for tropical ideals
will require generalizing the notion of irreducibility to this
context.
\end{remark}

The following example shows that the condition that the ideal $I$ is a
locally tropical ideal is crucial for theorems \ref{t:TropicalBasis}
and \ref{t:polyhedralcomplex}.

\begin{example}({\em Varieties of non-tropical ideals can be non-polyhedral.})
\label{e:arbitrary}
Let $\{f_\alpha\}_{\alpha \in A}$ be an arbitrary collection of tropical 
polynomials
in $\Rbar[x_1^{\pm 1}, \dotsc, x_n^{\pm 1}]$, and consider the ideal $I = \langle f_\alpha 
\rangle_{\alpha \in A}$. 
The variety $V(I) \subseteq \mathbb R^{n}$ 
satisfies $V(I) = \bigcap_{\alpha \in A} V(f_\alpha)$, which shows that any 
intersection of tropical
hypersurfaces can be the variety of an ideal in $\Rbar[x_1^{\pm 1}, \dotsc, x_n^{\pm 1}]$.
Moreover, note that any $\mathbb R$-rational half-hyperplane 
is equal to such an intersection:
If $H \subseteq \mathbb R^{n}$ is the half-hyperplane 
given by $\mathbf{a} \cdot \mathbf{x} = c$ 
and $\mathbf{b} \cdot \mathbf{x} \geq d$
with $\mathbf{a}, \mathbf{b} \in \mathbb{Z}^{n}$ and 
$c,d \in \mathbb{R}$, then 
\[H = V(\mathbf{x}^\mathbf{a} \tplus c) \cap
V(\mathbf{x}^\mathbf{b} \tplus (d-c) \ttimes \mathbf{x}^\mathbf{a}
\tplus d).\] 
Since an infinite intersection of
half-hyperplanes can result in a non-polyhedral set, 
the variety of an arbitrary ideal in $\Rbar[x_1^{\pm 1},
  \dotsc, x_n^{\pm 1}]$ need not be polyhedral.  For example, a closed
disc contained in a plane in $\mathbb R^3$ can be realized as the variety of an
ideal in $\Rbar[x_1^{\pm 1},x_2^{\pm 1}, x_3^{\pm 3}]$.
\end{example}

\begin{example} ({\em Non-realizable tropical ideals can have realizable varieties.})
\label{e:varietyofnonrealizable}
We now prove that the variety $V(I)$ defined by the
non-realizable tropical ideal $I$ of Example~\ref{ex:non-realizable}
is the standard tropical line $L$ in $\trop(\mathbb P^n)$.  This is
the tropical linear space (or Bergman fan) of the uniform matroid
$U_{2,n+1}$, which has a tropical basis consisting of all polynomials
$x_i \tplus x_j \tplus x_k$ for $0 \leq i<j<k \leq n$. Indeed, any tropical
polynomial of the form $x_i \tplus x_j \tplus x_k$ is a circuit of
$\M_1(I)$ and thus a polynomial in $I$, showing that $V(I)
\subseteq L$. For the reverse inclusion, fix $\mathbf{w} \in L$, so
$\min (w_0, w_1, \dotsc, w_n)$ is attained at least $n$ times. We may
assume that $w_0 \geq w_1 = \dotsb = w_n$. The tropical ideal $I$ is
generated by the circuits of the valuated matroids $\M_d(I)$, so it
suffices to prove that if $d \geq 0$ and $H$ is a circuit of $\M_d(I)$
then the minimum in $H(\mathbf{w})$ is achieved at least twice. 
Suppose that this minimum is achieved only once.  After
tropically scaling, the circuit $H$ has the form $H =
\bigoplus_{\mathbf{u} \in C} \mathbf{x}^{\mathbf{u}}$ with
$C$ an inclusion-minimal subset satisfying $|C| > d -
\deg(\gcd(C)) + 1$. Our assumption implies that there exist
$\mathbf{x}^{\mathbf{u}_0} \in C$ and $k \geq 0$ such that the only
monomial in $C$ not divisible by $x_0^k$ is
$\mathbf{x}^{\mathbf{u}_0}$.  Let $C' := C \setminus \{\mathbf{x}^{\mathbf{u}_0}\}$. 
The monomial $x_0 \cdot \gcd(C)$ divides
$\gcd(C')$, so $\deg(\gcd(C')) \geq \deg(\gcd(C)) + 1$.  We then have
$|C'| = |C| - 1 > d - \deg(\gcd(C)) \geq d - \deg(\gcd(C')) + 1$,
contradicting the minimality of $C$.
\end{example}

Tropical ideals satisfy the following version of the Nullstellensatz,
which is completely analogous to the classical version.  This
differs from the versions found in \cites{ShustinIzhakian,
  BertramEaston, JooMincheva}, and builds on the version in
\cite{GrigorievPodolskii}. 
The special cases when the ambient tropical toric variety is affine space
or projective space are presented in Corollary~\ref{c:affineprojectivenullstellensatz}.

\begin{theorem}({Tropical Nullstellensatz.}) \label{t:tropicalnullstellensatz}
If $I$ is a tropical ideal in 
$\Rbar[x_1^{\pm 1},\dots,x_n^{\pm 1}]$, then the variety 
$V(I) \subseteq \mathbb R^n$
is empty if and only if $I$ is the unit
ideal $\langle 0 \rangle$.

More generally, if $\Sigma$ is a simplicial fan in $N_{\mathbb R}$
and $I \subseteq \Cox(\trop(X_{\Sigma}))$ is a locally tropical ideal, then
the variety $V_\Sigma(I) \subseteq \trop(X_{\Sigma})$ is empty if and only if
$\trop(B_\Sigma)^d \subseteq I$ for some $d>0$.
\end{theorem}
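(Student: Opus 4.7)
The plan is to prove the two assertions in turn, leveraging Theorem~\ref{t:TropicalBasis} and the tropical Nullstellensatz of Grigoriev and Podolskii~\cite{GrigorievPodolskii} for the Laurent case, and then globalizing chart-by-chart for a general simplicial fan. The implication $I = \langle 0\rangle \Rightarrow V(I) = \emptyset$ in the Laurent setting is immediate, since the constant polynomial $0$ has a single term with finite coefficient, so its minimum is never attained twice, giving $V(0)=\emptyset$.

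For the converse in the Laurent case, suppose $V(I) = \emptyset$. By Theorem~\ref{t:TropicalBasis}, $I$ admits a finite tropical basis $\{f_1, \ldots, f_l\}$ with $\bigcap_i V(f_i) = \emptyset$. After multiplying each $f_i$ by a sufficiently high monomial to clear denominators (which does not change the variety on $\mathbb R^n$ since monomials are invertible in the Laurent semiring), I may assume the $f_i$ lie in $\Rbar[x_1,\ldots,x_n]$. Grigoriev--Podolskii's Nullstellensatz, applied to the finitely generated ideal $\langle f_1, \ldots, f_l\rangle$, then guarantees that some tropical monomial $\mathbf{x}^{\mathbf u}$ lies in this ideal, and hence in $I$. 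Since $\mathbf{x}^{\mathbf u}$ is invertible in $\Rbar[x_1^{\pm 1},\ldots,x_n^{\pm 1}]$, this forces $I = \langle 0\rangle$.

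For the general assertion, the ``if'' direction follows from Lemma~\ref{l:CoxConstructionVariety}: if $\trop(B_\Sigma)^d \subseteq I$ then $V(I) \subseteq V(\trop(B_\Sigma))$, whence $V_\Sigma(I) = (V(I) \setminus V(\trop(B_\Sigma)))/\ker(Q^T) = \emptyset$. Conversely, assume $V_\Sigma(I) = \emptyset$, so $V_\sigma(I) = \emptyset$ for every $\sigma \in \Sigma$. The coordinate ring of $N(\sigma)$ is the Laurent semiring $\Rbar[\sigma^\perp \cap M]$, and via a variant of the saturation argument used in Proposition~\ref{p:tropicalidealsareweak} the ideal $J_\sigma := (IS_{\mathbf{x}^{\hat\sigma}}/\langle x_i \sim \infty : i \in \sigma\rangle)_{\mathbf 0}$ is a tropical ideal in this Laurent semiring, with empty variety. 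The Laurent case then forces $J_\sigma = \langle 0\rangle$, which unravels to the existence of a homogeneous $f_\sigma \in I$ whose unique term not divisible by any $x_i$ with $i \in \sigma$ is a scalar multiple of $(\mathbf{x}^{\hat\sigma})^{d_\sigma}$ for some $d_\sigma \geq 0$.

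The main obstacle will be synthesizing these local witnesses $\{f_\sigma\}_{\sigma \in \Sigma}$ into the global statement $\trop(B_\Sigma)^D \subseteq I$ for a uniform $D$. My plan is to induct on the number of monomial factors appearing in an arbitrary generator $\prod_j \mathbf{x}^{\hat{\sigma_j}}$ of $\trop(B_\Sigma)^D$: for $D$ sufficiently large relative to $\max_\sigma d_\sigma$, I would multiply the witness $f_{\sigma_1}$ by a cofactor so that its leading term matches the target monomial, and then successively apply the monomial elimination axiom together with other $f_{\sigma'}$ to eliminate each error term of $g_{\sigma_1}$ (which by construction carries some $x_i$ with $i \in \sigma_1$, allowing it to be matched against a witness whose leading term absorbs that factor). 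Simpliciality of $\Sigma$ enters to ensure finite termination of this elimination, since it guarantees that the combinatorial structure of the witnesses aligns well with the generators of $\trop(B_\Sigma)$. This is the tropical counterpart to the classical fact that $B_\Sigma \subseteq \sqrt{I}$ whenever the subscheme of $X_\Sigma$ cut out by $I$ is empty, with monomial elimination replacing the role of classical subtraction.
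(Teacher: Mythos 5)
The core of your Laurent-case argument has a fatal gap at the step where you invoke Grigoriev--Podolskii. You claim that their Nullstellensatz, ``applied to the finitely generated ideal $\langle f_1,\ldots,f_l\rangle$, guarantees that some tropical monomial $\mathbf{x}^{\mathbf u}$ lies in this ideal.'' This cannot be true: since there is no cancellation in $\Rbar[x_1^{\pm1},\dots,x_n^{\pm1}]$, a tropical combination $\bigoplus_i g_i \ttimes f_i$ has support equal to $\bigcup_i (\supp(g_i) + \supp(f_i))$, which is a singleton only if some $f_i$ is itself a monomial. So $\langle f_1,\dots,f_l\rangle$ almost never contains a monomial, even when $\bigcap_i V(f_i)=\emptyset$ --- the paper's own counterexample $\langle x\tplus 0,\, x\tplus 1\rangle\subseteq\Rbar[x]$ in the remark following Corollary~\ref{c:affineprojectivenullstellensatz} illustrates exactly this. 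What Grigoriev--Podolskii actually deliver is the emptiness of a \emph{dual} tropical linear system: for $d\gg 0$ there is no $\mathbf y\in\mathbb R^{\mon_{\leq d}}$ tropically orthogonal to all the monomial multiples $\mathbf{x}^{\mathbf u}f_i$ of bounded degree. The crucial, non-omittable step in the paper's proof is to upgrade this to the conclusion that a monomial lies in the ideal, and that upgrade is precisely where the tropical-ideal hypothesis enters: the degree-$\leq d$ part of $I\cap\Rbar[x_1,\dots,x_n]$ is the set of vectors of a valuated matroid $\M_{\leq d}$, and an empty dual tropical linear space for this matroid forces $\M_{\leq d}$ to have a loop, which is a monomial in $I$. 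Your proposal elides this bridge entirely, and without the matroid structure the bridge does not exist.

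A secondary, lesser issue is that the general toric case is left as a plan rather than a proof. Your sketch (induct on the number of monomial factors, match leading terms, eliminate error terms) is in the right spirit, but the paper's actual argument is organized differently: it inducts on $\dim(\sigma)$, shows that for each $\sigma$ one can replace the local witness $0\tplus f_\sigma$ by one where every term of $f_\sigma$ is divisible by $x_i^l$ for some $i\in\sigma$ (where $l$ absorbs the inductive hypothesis on faces), and only then applies the vector elimination axiom to kill the extra terms. The simpliciality of $\Sigma$ is used concretely --- it ensures $x_i^l\in IS_{\mathbf{x}^{\hat\sigma}}$ from the powers $(\mathbf{x}^{\hat\tau})^l$ over facets $\tau$ of $\sigma$. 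Your sketch gestures at ``simpliciality ensures finite termination'' without pinning down where or why, so even if the overall shape could be made to work, as written it would need that argument spelled out before it could be checked. The first gap, however, is the serious one: fixing it requires reinstating the matroidal step that your proposal skips.
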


\begin{proof}
The ``if'' direction of the first claim is immediate.  For the
``only-if'' direction, suppose that $I$ is a tropical ideal in
$\Rbar[x_1^{\pm 1},\dots,x_n^{\pm 1}]$ with $V(I) \subseteq \mathbb R^{n}$
empty.  Let $J := I \cap \Rbar[x_1,\dots,x_n]$, which is again a
tropical ideal. 
The variety $V(J) \cap \mathbb R^n$ is also empty.  Indeed, for any $\mathbf{w}
\in \mathbb R^n$ there exists $f \in I$ with the minimum in
$f(\mathbf{w})$ achieved only once, and then $\mathbf{x}^{\mathbf{u}}f
\in J$ for some monomial $\mathbf{x}^{\mathbf{u}}$, with the minimum
in $\mathbf{x}^{\mathbf{u}}f(\mathbf{w})$ again achieved only once.
By Theorem~\ref{t:TropicalBasis}, there exists a tropical basis $\{f_1,
f_2, \dotsc, f_l\} \subseteq J$ for $J$.  
Write $d_i$ for the maximum degree of a monomial in $f_i$.  For a
fixed degree $d \geq \max \{ d_i\}$, we consider the collection of
tropical polynomials $\{ \mathbf{x}^{\mathbf{u}} f_i : |\mathbf{u}|
\leq d-d_i \}$. 
The coefficients of these polynomials can be considered as
vectors in $\Rbar^{\mon_{\leq d}}$, where $\mon_{\leq d}$ denotes
the set of monomials of degree at most $d$.  
Let $\mathcal F_d \subseteq \Rbar^{\mon_{\leq d}}$ be the collection of these vectors.  
By \cite{GrigorievPodolskii}*{Theorem 4
  (i)}, since $f_1,\dots,f_l$ have no common solution in $\mathbb
R^n$, for $d \gg 0$ the set
\begin{align*}
\mathcal F_d^{\perp} := \{ \mathbf{y}
\in \mathbb R^{\mon_{\leq d}} : &
\text{ the minimum in } \min( a_{\mathbf{u}} + {y}_{\mathbf{u}} :
\mathbf{u} \in \mon_{\leq d} ) \text{ is achieved} \\
& \text{ at least twice for all }
\mathbf{a} = (a_{\mathbf{u}}) \in \mathcal F_d \}
\end{align*}
is empty.  Let $\mathcal J_d$ be the set of vectors in
$\Rbar^{\mon_{\leq d}}$ corresponding to polynomials in $J$ of degree
at most $d$.  As $\mathcal F_d \subseteq \mathcal J_d$, the set
$\mathcal J_d^\perp$ consisting of all $\mathbf{y} \in
\mathbb R^{\mon_{\leq d}}$ for which the minimum in $\min(
a_{\mathbf{u}} + {y}_{\mathbf{u}} : \mathbf{u} \in \mon_{\leq d})$ is
achieved at least twice for all $\mathbf{a} = (a_{\mathbf{u}}) \in
\mathcal J_d$ is also empty.  Since $J$ is a tropical ideal, $\mathcal
J_d$ is the set of vectors of a valuated matroid $\M_{\leq d}(J)$. 
The set $\mathcal J_d^{\perp}$ is the tropical linear space in $\mathbb R^{\mon_{\leq d}}$
defined by the linear tropical polynomials in $\mathcal
J_d$. It follows that $\mathcal J_d^{\perp}$ 
is empty if and only the matroid $\M_{ \leq d}(J)$ contains a loop.  
This means that  $J$ contains a monomial, and so $I = \langle 0 \rangle$.

For the general case, suppose now that $\Sigma$ is a simplicial fan,
and the ideal $I$ is a locally tropical ideal in
$S:=\Cox(\trop(X_{\Sigma}))$ with $V_{\Sigma}(I)$ empty.  For each
cone $\sigma \in \Sigma$ we then have $V_{\sigma}(I) = \emptyset$, and so by the
first part of the proof, the ideal
$(IS_{\mathbf{x}^{\hat{\sigma}}}/\langle x_i \sim \infty : i \in
\sigma \rangle )_{\mathbf{0}} = 
\langle 0 \rangle$. For all
$\sigma \in \Sigma$ there is thus a polynomial of the form $0 \tplus
f_{\sigma} \in (IS_{\mathbf{x}^{\hat{\sigma}}})_{\mathbf 0}$, where 
every term of $f_{\sigma}$ is
divisible by a variable $x_i$ with $i \in \sigma$.

We prove by induction on $\dim(\sigma)$ that $I$ contains a power of
$\mathbf{x}^{\hat{\sigma}}$.  When $\sigma$ is the
origin the polynomial $f_{\sigma}$ equals $\infty$, so $0 \in
IS_{\mathbf{x}^{\hat{\sigma}}}$, and thus some power of
$\mathbf{x}^{\hat{\sigma}}$ lies in $I$.  
Suppose now that $\dim(\sigma)>0$ and the claim is true for all $\tau$ of
smaller dimension.  By induction, for all faces $\tau$ of $\sigma$, some
power of $\mathbf{x}^{\hat{\tau}}$ lies in $I$.  Let $l$ be the
maximum such power over all $\tau \preceq \sigma$.  
It suffices to show that $f_{\sigma}$ can be chosen so that every
monomial in its support is divisible by $x_i^l$ for some $i \in
\sigma$. Indeed, since $(\mathbf{x}^{\hat{\tau}})^l \in I$ for all faces
$\tau$ of $\sigma$, as $\sigma$ is a simplicial cone 
we have $x_i^l \in IS_{\mathbf{x}^{\hat{\sigma}}}$
for all $i \in \sigma$, and so every term of $f_{\sigma}$ is in
$(IS_{\mathbf{x}^{\hat{\sigma}}})_{\mathbf{0}}$. As $0 \tplus
f_{\sigma} \in (IS_{\mathbf{x}^{\hat{\sigma}}})_{\mathbf 0}$, we can repeatedly apply
the vector elimination axiom in $(IS_{\mathbf{x}^{\hat{\sigma}}})_{\mathbf 0}$
to conclude that $0 \in
(IS_{\mathbf{x}^{\hat{\sigma}}})_{\mathbf{0}}$, and thus some power of
$\mathbf{x}^{\hat{\sigma}}$ lies in $I$.

To finish the proof we now show that we may choose $f_{\sigma}$ so
that every monomial in its support is divisible by
$x_i^l$ for some $i \in \sigma$.  Suppose that this is not possible.
For any monomial $\mathbf x^{\mathbf u}$ in $S_{\mathbf{x}^{\hat{\sigma}}}$,
let its \emph{$\sigma$-degree} $\deg_\sigma(\mathbf x^{\mathbf u})$ be its degree 
in just the variables $x_i$ with $i \in \sigma$, i.e., 
$\deg_\sigma(\mathbf x^{\mathbf u}) := \sum_{i \in \sigma} u_i$. 
Fix a choice of $f_{\sigma}$, which must then have a term not
divisible by any $x_i^l$ with $i \in \sigma$. 
We may assume that $f_{\sigma}$ has been chosen so that 
the minimum $\sigma$-degree $l'$ among all its terms is as large as possible,
and furthermore, that $f_{\sigma}$ has as few terms as possible of $\sigma$-degree equal to $l'$. 
Fix a term $a_{\mathbf{u}}\ttimes\mathbf{x}^{\mathbf{u}}$ of $f_{\sigma}$ of $\sigma$-degree $l'$. 
As every term of $f_{\sigma}$ is divisible by a variable $x_i$ with $i \in \sigma$,
all the terms of $a_{\mathbf{u}}\ttimes \mathbf{x}^{\mathbf{u}} \ttimes (0 \tplus f_{\sigma})$ except for 
$a_{\mathbf{u}}\ttimes\mathbf{x}^{\mathbf{u}}$ have $\sigma$-degree at least $l'+1$.
The vector elimination axiom in $(IS_{\mathbf{x}^{\hat{\sigma}}})_{\mathbf{0}}$ 
applied to the polynomials $0 \tplus f_{\sigma}$ and
$a_{\mathbf{u}}\ttimes \mathbf{x}^{\mathbf{u}} \ttimes (0 \tplus f_{\sigma})$
then produces a polynomial of the form $0 \tplus f'_{\sigma}$ where every term
of $f'_{\sigma}$ has $\sigma$-degree at least $l'$, and $f'_{\sigma}$
has fewer terms than $f_{\sigma}$ of $\sigma$-degree equal to $l'$. This
contradicts our choice of $f_{\sigma}$, concluding the proof. 
\end{proof}

Theorem~\ref{t:tropicalnullstellensatz} has the following immediate corollary.

\begin{corollary}\label{c:affineprojectivenullstellensatz}
If $I$ is a locally tropical ideal in $\Cox(\trop(\mathbb A^n))
=\Rbar[x_1,\dots,x_n]$ then $V(I) \subseteq \Rbar^n$ is empty if and
only if $I = \langle 0 \rangle$.

If $I$ is a homogeneous tropical ideal in $\Rbar[x_0,\dots,x_n]$ then $V(I)
\subseteq \trop(\mathbb P^n)$ is empty if and only if there exists
$d>0$ such that $\langle x_0, \dotsc, x_n \rangle^d \subseteq I$.  
\end{corollary}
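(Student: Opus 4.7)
The plan is to derive both assertions directly from Theorem~\ref{t:tropicalnullstellensatz} by computing the irrelevant ideal $\trop(B_\Sigma)$ for the specific fans defining $\mathbb A^n$ and $\mathbb P^n$, and by verifying that the hypothesis ``locally tropical'' is satisfied in each case. Since both fans are simplicial and have rays that positively span $N_{\mathbb R}$ in the projective case, the theorem applies, and the work reduces to identifying the irrelevant ideal explicitly.

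For the affine case, the fan $\Sigma$ is the positive orthant in $\mathbb R^n$ together with all its faces, which is manifestly simplicial. The unique maximal cone $\sigma_{\max}$ contains every ray of $\Sigma$, so $\mathbf{x}^{\hat{\sigma}_{\max}} = \prod_{i \not\in \sigma_{\max}} x_i$ is the empty product, equal to the tropical multiplicative identity~$0$. Thus $\trop(B_\Sigma) = \langle 0 \rangle$ is the unit ideal, and the condition $\trop(B_\Sigma)^d \subseteq I$ from Theorem~\ref{t:tropicalnullstellensatz} collapses to $0 \in I$, i.e., $I = \langle 0 \rangle$. Applying the theorem gives the first claim.

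For the projective case, the fan of $\mathbb P^n$ has rays $\mathbf{e}_0,\mathbf{e}_1,\dotsc,\mathbf{e}_n$ with $\mathbf{e}_0 = -\sum_{i=1}^n \mathbf{e}_i$, and maximal cones spanned by any $n$ of these rays; this fan is simplicial, and its rays positively span $N_{\mathbb R}$. By Proposition~\ref{p:tropicalidealsareweak}, any homogeneous tropical ideal $I \subseteq \Rbar[x_0,\dotsc,x_n]$ is a locally tropical ideal, so Theorem~\ref{t:tropicalnullstellensatz} applies. A maximal cone $\sigma$ omits exactly one ray $\mathbf{e}_i$, so $\mathbf{x}^{\hat{\sigma}} = x_i$, and therefore
\[
\trop(B_\Sigma) = \langle \mathbf{x}^{\hat{\sigma}} : \sigma \in \Sigma \rangle = \langle x_0, x_1, \dotsc, x_n \rangle.
\]
The theorem then gives that $V_\Sigma(I) \subseteq \trop(\mathbb P^n)$ is empty if and only if $\langle x_0, \dotsc, x_n \rangle^d \subseteq I$ for some $d > 0$, which is exactly the second claim.

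There is no substantial obstacle in this proof; the entire content has already been packaged into Theorem~\ref{t:tropicalnullstellensatz}, and the remaining work is the straightforward identification of $\trop(B_\Sigma)$ in the two cases of interest, together with the appeal to Proposition~\ref{p:tropicalidealsareweak} to upgrade the projective hypothesis from homogeneous tropical to locally tropical.
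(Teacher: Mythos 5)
Your proof is correct, and it spells out exactly the computation the paper elides when it calls this an ``immediate corollary'': identifying $\trop(B_\Sigma)$ for the fans of $\mathbb A^n$ (where the empty product over the maximal cone makes $\trop(B_\Sigma)$ the unit ideal) and $\mathbb P^n$ (where it is $\langle x_0,\dotsc,x_n\rangle$), and invoking Proposition~\ref{p:tropicalidealsareweak} in the projective case so that the homogeneous-tropical hypothesis suffices to apply Theorem~\ref{t:tropicalnullstellensatz}. This is precisely the intended derivation.
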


\begin{remark}
The hypothesis that $I$ is a locally tropical ideal is essential in
Theorem~\ref{t:tropicalnullstellensatz}.  Consider, for example, the
ideal $I=\langle x \tplus 0, x \tplus 1 \rangle \subseteq \Rbar[x]$.
Then $V(I) \subseteq \Rbar$ is empty but $I$ is not the unit ideal;
any polynomial of the form $f \ttimes(x \tplus 0) \tplus g \ttimes(x
\tplus 1)$ involves a monomial divisible by $x$.
\end{remark}

As a simple application of the tropical Nullstellensatz, 
we classify all maximal tropical ideals of the semiring 
$\Rbar[x_1, \dotsc, x_n]$. 

\begin{example}\label{e:maximalideals}
({\em Maximal tropical ideals of $\Rbar[x_1,\dots,x_n]$.})
We show that maximal tropical ideals of the semiring
$\Rbar[x_1, \dotsc, x_n]$ are in one to one correspondence 
with points in $\Rbar^n$.
Indeed, for any $\mathbf a \in \Rbar^n$ let $J_{\mathbf a}$ be the tropical 
ideal consisting of all polynomials that tropically vanish on $\mathbf a$ 
(see Example \ref{e:affineidealofpoint}).
If a tropical ideal $I$ satisfies $V(I) \neq \emptyset$ then $I$ must be contained 
in one of the $J_{\mathbf a}$. On the other hand, if $V(I) = \emptyset$ then, by the 
tropical Nullstellensatz, $I$ must be the unit ideal $\langle 0 \rangle$.
It follows that the $J_{\mathbf a}$ are the only maximal tropical ideals of 
$\Rbar[x_1, \dotsc, x_n]$.
\end{example}

\begin{bibdiv}

\begin{biblist}

\bib{AB}{article}{
title={Flag arrangements and triangulations of products of simplices},
author={Ardila, Federico},
author={Billey, Sara},
journal={Advances in Mathematics},
volume={214},
number={2},
pages={495--524},
year={2007},
publisher={Elsevier}
}

\bib{BertramEaston}{article}{
   author={Bertram, Aaron},
   author={Easton, Robert},
   title={The tropical Nullstellensatz for congruences},
   journal={Adv. Math.},
   volume={308},
   date={2017},
   pages={36--82},
   issn={0001-8708},
}

\bib{Baker}{unpublished}{
  title={Matroids over hyperfields},
  author={Baker, Matthew},
  note={Preprint. {\tt arXiv:1601.01204v3}},
  year={2016}
}

\bib{BrunsHerzog}{book}{
   author={Bruns, Winfried},
   author={Herzog, J{\"u}rgen},
   title={Cohen-Macaulay rings},
   series={Cambridge Studies in Advanced Mathematics},
   volume={39},
   publisher={Cambridge University Press, Cambridge},
   date={1993},
   pages={xii+403},
   isbn={0-521-41068-1},
}

\bib{CDPR}{article}{
   author={Cools, Filip},
   author={Draisma, Jan},
   author={Payne, Sam},
   author={Robeva, Elina},
   title={A tropical proof of the Brill-Noether theorem},	
   journal={Adv. Math.},
   volume={230},
   date={2012},
   number={2},
   pages={759--776}
}

\bib{CLS}{book}{
  author={Cox, David A.},
   author={Little, John B.},
   author={Schenck, Henry K.},
   title={Toric varieties},
   series={Graduate Studies in Mathematics},
   volume={124},
   publisher={American Mathematical Society, Providence, RI},
   date={2011},
   pages={xxiv+841},
   isbn={978-0-8218-4819-7},
}

\bib{DressWenzel}{article}{
author = {Dress, Andreas},
author = {Wenzel, Walter},
title = {Valuated matroids},
journal = {Adv. Math.},
year = {1992},
volume = {93},
pages = {214--250},
number = {2},
}

\bib{Durov}{unpublished}{
author = {Durov, Nikolai},
title= {New Approach to Arakelov Geometry},
year ={2007},
note={Preprint. {\tt arXiv:0704.2030}},
}

\bib{Giansiracusa2}{article}{
 author={Giansiracusa, Jeffrey},
   author={Giansiracusa, Noah},
   title={Equations of tropical varieties},
   journal={Duke Math. J.},
   volume={165},
   date={2016},
   number={18},
   pages={3379--3433},
   issn={0012-7094},
}

\bib{GrigorievPodolskii}{unpublished}{
  title={Tropical effective primary and dual Nullstellens\"atze},
  author={Grigoriev, Dima},
  author={Podolskii, Vladimir V.},
  note={Preprint. {\tt arXiv:1409.6215}},
  year={2014}
}

\bib{GriggManwaring}{unpublished}{
title={An elementary proof of the fundamental theory of tropical algebra},
author={Grigg, Nathan},
author={Manwaring, Nathan},
note={Preprint. {\tt arXiv:0707.2591}},
year={2007}
}

\bib{GrossBook}{book}{
   author={Gross, Mark},
   title={Tropical geometry and mirror symmetry},
   series={CBMS Regional Conference Series in Mathematics},
   volume={114},
   publisher={Published for the Conference Board of the Mathematical
   Sciences, Washington, DC; by the American Mathematical Society,
   Providence, RI},
   date={2011},
   pages={xvi+317},
   isbn={978-0-8218-5232-3},
}

\bib{Haque}{unpublished}{
author={Haque, Mohammad M.},
title={Tropical incidence relations, polytopes, and concordant matroids},
note={Preprint. {\tt arXiv:1211.2841}},
year={2012}
}

\bib{TropicalHomology}{unpublished}{
author={Itenberg, Ilia},
author={Katzarkov, Ludmil},
author={Mikhalkin, Grigory},
author={Zharkov, Ilia},
title={Tropical Homology},
year={2016},
note={Preprint.  {\tt arXiv:1604.01838}}
}

\bib{JensenPayne1}{article}{
   author={Jensen, David},
   author={Payne, Sam},
   title={Tropical independence I: Shapes of divisors and a proof of the
   Gieseker-Petri theorem},
   journal={Algebra Number Theory},
   volume={8},
   date={2014},
   number={9},
   pages={2043--2066}
}

\bib{JensenPayne2}{article}{
  author={Jensen, David},
   author={Payne, Sam},
   title={Tropical independence II: The maximal rank conjecture for
   quadrics},
   journal={Algebra Number Theory},
   volume={10},
   date={2016},
   number={8},
   pages={1601--1640},
   issn={1937-0652},
}

\bib{JooMincheva}{unpublished}{
  title={Prime congruences of idempotent semirings and a {N}ullstellensatz for 
tropical polynomials},
  author={Jo{\'o}, D{\'a}niel},
  author={Mincheva, Kalina},
  note={Preprint.  {\tt arXiv:1408.3817}},
  year={2014}
}

\bib{Kajiwara}{article}{
   author={Kajiwara, Takeshi},
   title={Tropical toric geometry},
   conference={
      title={Toric topology},
   },
   book={
      series={Contemp. Math.},
      volume={460},
      publisher={Amer. Math. Soc., Providence, RI},
   },
   date={2008},
   pages={197--207}
}

\bib{LorscheidBluePrints1}{article}{
   author={Lorscheid, Oliver},
   title={The geometry of blueprints: Part I: Algebraic background and
   scheme theory},
   journal={Adv. Math.},
   volume={229},
   date={2012},
   number={3},
   pages={1804--1846},
   issn={0001-8708},
}

\bib{LorscheidBlueSchemesRelative}{article}{
   author={Lorscheid, Oliver},
   title={Blue schemes, semiring schemes, and relative schemes after To\"en
   and Vaqui\'e},
   journal={J. Algebra},
   volume={482},
   date={2017},
   pages={264--302},
   issn={0021-8693},
}

\bib{LorscheidSchemetheoretic}{unpublished}{
author={Lorscheid, Oliver},
title={Scheme theoretic tropicalization},
year={2015},
note={Preprint.  {\tt arXiv:1508.07949}},
}

\bib{MaclaganAntichains}{article}{
   author={Maclagan, Diane},
   title={Antichains of monomial ideals are finite},
   journal={Proc. Amer. Math. Soc.},
   volume={129},
   date={2001},
   number={6},
   pages={1609--1615 (electronic)},
}

\bib{MaclaganRincon1}{unpublished}{
author={Maclagan, Diane},
author={Rinc\'on, Felipe},
title={Tropical schemes, tropical cycles, and valuated matroids},
note={Preprint.  {\tt arXiv:1401.4654}},
year={2014}
}

\bib{TropicalBook}{book}{
    AUTHOR = {Maclagan, Diane},
    author = {Sturmfels, Bernd},
     TITLE = {Introduction to {T}ropical {G}eometry},
    SERIES = {Graduate Studies in Mathematics},
 PUBLISHER = {American Mathematical Society, Providence, RI},
VOLUME = {161},
      YEAR = {2015},
     PAGES = {vii+359},
}

\bib{MikhalkinP2}{article}{
   author={Mikhalkin, Grigory},
   title={Enumerative tropical algebraic geometry in $\mathbb R^2$},
   journal={J. Amer. Math. Soc.},
   volume={18},
   date={2005},
   number={2},
   pages={313--377},
}

\bib{MillerSturmfels}{book}{
   author={Miller, Ezra},
   author={Sturmfels, Bernd},
   title={Combinatorial commutative algebra},
   series={Graduate Texts in Mathematics},
   volume={227},
   publisher={Springer-Verlag, New York},
   date={2005},
   pages={xiv+417},
   isbn={0-387-22356-8},
}

\bib{MurotaBook}{book}{
   author={Murota, Kazuo},
   title={Matrices and matroids for systems analysis},
   series={Algorithms and Combinatorics},
   volume={20},
   publisher={Springer-Verlag, Berlin},
   date={2010},
   pages={xii+483},
   isbn={978-3-642-03993-5},
}

\bib{MurotaTamura}{article}{
author = {Murota, Kazuo},
author = {Tamura, Akihisa},
title = {On circuit valuation of matroids},
journal = {Adv. in Appl. Math.},
year = {2001},
volume = {26},
pages = {192--225},
number = {3}
}

\bib{Oxley}{book}{
author={Oxley, James G.},
title={Matroid theory},
series={Oxford Science Publications},
publisher={The Clarendon Press Oxford University Press},
place={New York},
date={1992},
pages={xii+532},
isbn={0-19-853563-5},
}

\bib{Payne}{article}{
   author={Payne, Sam},
   title={Analytification is the limit of all tropicalizations},
   journal={Math. Res. Lett.},
   volume={16},
   date={2009},
   number={3},
   pages={543--556},
   issn={1073-2780},
}

\bib{Rabinoff}{article}{
   author={Rabinoff, Joseph},
   title={Tropical analytic geometry, Newton polygons, and tropical
   intersections},
   journal={Adv. Math.},
   volume={229},
   date={2012},
   number={6},
   pages={3192--3255},
   issn={0001-8708},
}

\bib{ShustinIzhakian}{article}{
  title={A tropical {N}ullstellensatz},
  author={Shustin, Eugenii},
  author={Izhakian, Zur},
  journal={Proc. Amer. Math. Soc.},
  pages={3815--3821},
  year={2007},
  publisher={JSTOR}
}

\bib{ToenVaquie}{article}{
   author={To{\"e}n, Bertrand},
   author={Vaqui{\'e}, Michel},
   title={Au-dessous de ${\rm Spec}\,\mathbb Z$},
   journal={J. K-Theory},
   volume={3},
   date={2009},
   number={3},
   pages={437--500},
   issn={1865-2433},
}

\end{biblist}

\end{bibdiv}

\end{document}